\def \G{{\mathscr G}}
\def \H{\mathcal H}
\def \l2x{L^2(X;\mc H)}
\def \dalpha{{d{\mu_{\widehat \Gamma}}(\alpha)}}
\def \dGammax{{d{\mu_{\Gamma\backslash \mathscr G}}(\Gamma x)}}
\newcommand{\mc}{\mathcal}
\newcommand{\msc}{\mathscr}
\newcommand{\mf}{\mathfrak}
\newcommand{\U}{\mathbin{\mathcal{U}\kern-.1em}}
\renewcommand{\S}{\mathbin{\mathcal{S}\kern-.08em}}
\newcommand{\ra}{\rightarrow}
\renewcommand{\ss}{\subset}
\newcommand{\ol}{\overline}
\DeclareMathOperator*{\Span}{span}
\DeclareMathOperator*{\supp}{supp}
\DeclareMathOperator*{\a.e.}{a.e.}
\newtheorem{thm}{Theorem}[section]
\newtheorem{lem}[thm]{Lemma}
\newtheorem{deff}[thm]{Definition}
\newtheorem{prop}[thm]{Proposition}
\newtheorem{cor}[thm]{Corollary}
\newtheorem{theorem}{Theorem}[section]
\newtheorem{example}[theorem]{Example}
\newenvironment{defn}{\begin{deff}
		\rm }{\end{deff}}
\newenvironment{exa}{\begin{example}
		\rm }{\end{example}}
\theoremstyle{definition}
\theoremstyle{remark}
\numberwithin{equation}{section}
\newcommand\reallywidehat[1]{%
	\savestack{\tmpbox}{\stretchto{%
			\scaleto{%
				\scalerel*[\widthof{\ensuremath{#1}}]{\kern.1pt\mathchar"0362\kern.1pt}%
				{\rule{0ex}{\textheight}}
			}{\textheight}%
		}{2.4ex}}%
	\stackon[-6.9pt]{#1}{\tmpbox}%
}
\begin{document}

\title[Subspace Dual and orthogonal frames]{ Subspace Dual and orthogonal frames\\ by action of an abelian group }
\author{Sudipta Sarkar}

\author{Niraj K. Shukla}
\address{Department of Mathematics, 
	Indian Institute of Technology Indore, 
	Simrol, Khandwa Road, 
	Indore-453 552, India}
 \email{sudipta.math7@gmail.com,  o.nirajshukla@gmail.com}
\thanks{The first author was supported   by CSIR, New Delhi (09/1022(0037)/2017-EMR-I) while the second author was supported by NBHM-DAE(Grant 02011/2018-NBHM(R.P.)/R\&D II/14723)}

\subjclass[2000]{42C15;42C40;43A32;46C05;47A15; }


\keywords{Locally Compact Group, Zak Transform,  Translation Invariant System, Subspace Dual and Orthogonal Frames, Biorthogonal System}

\begin{abstract}
In this article, we discuss subspace duals of a frame of translates by an action of a closed abelian subgroup $\Gamma$ of a locally compact group $\mathscr G.$ These subspace duals are not required to lie in the space generated by the frame. We characterise translation-generated subspace duals of a frame/Riesz basis involving the Zak transform for the pair $(\mathscr G, \Gamma) .$ We continue our discussion on the orthogonality of two translation-generated Bessel pairs using the Zak transform, which allows us to explore the dual of super-frames. As an example, we extend our findings to splines, Gabor systems, $p$-adic fields $\mathbb Q p,$ locally compact abelian groups using the fiberization map.
\end{abstract}
\maketitle
\section{Introduction}

Let $\mathscr G$ be a second countable locally compact group (may not be abelian)  and  $\Gamma$ be a closed abelian subgroup of $\mathscr G$. A closed subspace  $V$ in $L^2(\mathscr G)$ is said to be   \textit{$\Gamma$-translation invariant ($\Gamma$-TI)} if $L_\xi f \in V$  for all $f\in V$ and $\xi \in \Gamma$, where for $\eta \in \mathscr G$ the \textit{left translation} $L_\eta$ on $L^2 (\mathscr G)$ is defined by    
$$(L_\eta f)(\gamma) = f(\eta^{-1} \gamma), \quad \gamma \in \mathscr G \  \mbox{and }\ f\in L^2(\mathscr G).$$
 For a  family of functions  $\mc A :=\{\varphi_t : t\in  \mc N\}$ in $L^2(\mathscr G),$ we   define   \textit{$\Gamma$-translation generated (TG)} system $\mc E^{\Gamma} (\mc A)$ and its associated  $\Gamma$-translation invariant    space $\mc S^{\Gamma} (\mc A)$   as follows: 
\begin{align}\label{TIsystem}
\mc E^{\Gamma} (\mc A):=\{L_{\gamma}\varphi : \gamma \in  \Gamma, \varphi \in \mc A\} , \ \mbox{and} \ \mc S^{\Gamma}(\mc A) := \ol{\Span}\{L_{\gamma}\varphi : \gamma \in \Gamma,\varphi \in \mc A\},
\end{align}
respectively.  
 For $\mc A=\{\varphi\},$ we denote $\mc E^{\Gamma} (\mc A)$ and $\mc S^{\Gamma} (\mc A)$ by $\mc E^{\Gamma} (\varphi)$ and $\mc S^{\Gamma} (\varphi),$ respectively. In this scenario, our main  goal  is to provide a detailed  study of $\mc S^\Gamma(\mc A)$-subspace duals of a Bessel family/frame $\mc E^{\Gamma}(\mc A)$ in $L^2 (\mathscr G)$ due to its wide use   into the various areas like, harmonic analysis,  mathematical physics,    quantum mechanics,   quantum optics, etc.
Our results have so many predecessors related to the work on  subspace and  alternate duals,  orthogonal Bessel pair, etc.  
\cite{christensen2005generalized, christensen2004oblique, gumber2018orthogonality, gumber2019pairwisebuletin,  heil2009duals, gabardo2007uniqueness, bownik2019multiplication,kim2007pair,weber2004orthogonal}.  
The purpose of this paper is devoted to characterize a pair of orthogonal frames, and  subspace dual   of a Bessel family/frame  generated by  the $\Gamma$-TG system $\mc E^\Gamma(\mc A)$ in $L^2(\G)$ (see Theorems \ref{T:dualityMultiGenerators},\ref{T:Orthogonal}, \ref {T:orthogonalsupplementary}  \ref{Th:dual existence}, and \ref{Th: Dual-Ortho-Non Discerete}). When $\mc E^\Gamma(\mc A)$ is a Riesz basis then there is an associated biorthogonal system, which forms a unique dual  and it is called \textit{biorthogonal dual}. A brief study of biorthogonal system with discrete translation is  discussed here (see Theorem  \ref{T:Bio2}). We characterize such results using the Zak transform $\mc Z$ for the pair $(\mathscr G, \Gamma)$ defined by (\ref{eq:ZakTransform}). For the case of locally compact abelian group  $\mc G,$  we use the fiberization map $\mathscr T$  which unifies the classical results related to the orthogonality and duals of a Bessel family/frame associated with a TI  space (see Theorem \ref{Th: Dual-Ortho-Non Discerete}). This study of frames for their  orthogonality also enables us to discuss dual for the super Hilbert space $\oplus^{N}L^2(\G)$ (see Theorem \ref{T:Superdual}). As an application we have shown our results for Gabor systems (see Theorems \ref{T:Gaborcountable}, and \ref{TMI-Alternate Dual Thm}), splines,  the locally compact field of $p$-adic numbers $\mathbb Q_p$, and semidirect product of LCA groups, etc.

These duals are typically studied in the Euclidean context. Christensen and Eldar provided characterizations of  oblique duals for singly generated systems by the action of integers in \cite{christensen2004oblique}. Later they generalized it for multi-generators in \cite{christensen2005generalized}. Additionally, Gabardo and Hemmat investigated and characterized such duals and their uniqueness in \cite{gabardo2007uniqueness} for integer translation generated systems.
Heil, Koo, and Lim expanded the concept of various types of duals for the separable Hilbert spaces \cite{heil2009duals}. At the same time, the concept of subspace dual was started by Weber \cite[Definition 1.2]{weber2004orthogonal}. In this line of inquiry, our primary objective is to conduct a comprehensive study of subspace duals in the discrete and continuous setup of locally compact groups, with the hopes of unifying the previous studies. Orthogonality is a key concept to discuss the duals for super spaces \cite{han2000frames}. Kim et al. in  \cite{kim2007pair} established remarkable development following Weber \cite{weber2004orthogonal}. Here we also provide a detailed study of subspace orthogonality in the most general setup of locally compact group.

 One of the advantages of doing research on the pair $(\mathscr G, \Gamma)$ using the Zak transform is that it allows researchers to access a large number of pairs that were inaccessible before (using the Fourier Transform). These pairs include things like $(\mathbb R^n, \mathbb Z^m),$      $(\mathbb R^n,  \mathbb R^m),$ $(\mathbb Q_p, \mathbb Z_p),$ $(\mathcal G, \Lambda),$ etc., where  $n\geq m,$  $\Lambda$ (not necessarily co-compact, i.e., $\mathcal G/\Lambda$-compact, or uniform lattice) is a  closed subgroup of  the second countable  LCA group $\mathcal G,$ and   $\mathbb Z_p$ is the  $p$-adic integers in the $p$-adic number $\mathbb Q_p.$ This research goes beyond the boundary line of the locally compact abelian (LCA) group  with discrete translations, and now it includes the non-abelian groups with continuous translations. Furthermore, in the past, the application of the Zak transform was restricted to the case of  Gabor systems; however, we now apply it for translation generated systems.


\subsection{Organization}
The structure of the paper is as   follows. Few preliminaries about  subspace frames and  bracket map for locally compact group are discussed in Section \ref{Pre}. By action of a closed discrete abelian subgroup $\Gamma$  of $\G$ on a sequence of functions $\mc A$ in $L^2(\G),$  we study $\mc S^\Gamma(\mc A)$-subspace orthogonal and duals to a Bessel family/frame  $\mc E^{\Gamma}(\mc A)$ in Section \ref{s:discrete-translation} \,  and obtain characterization results in terms of the Zak transform for the pair $(\G, \Gamma)$ and Gramian operator.  As an application,  we study $\mc E^{\Gamma}(\mc A)$  for singly generated system in    Subsection \ref{ss:disc-singlegen}  .         An investigation of a translation generated biorthogonal system and dual of a Riesz basis is    carried out in   Section   $\ref{s:biortho}.$  In Section \ref{s:non-discrete}, we address the theory for a collection of generators indexed by $\sigma$-finite measure space (need not be countable) by the action of any closed abelian subgroup $\Gamma$ of $\G$ which unifies the broader class of continuous frames as well.  This paper ends with an illustration of our results for the various potential applications,  like,   Splines, Gabor systems,   $p$-adic fields $\mathbb Q_p,$  and semidirect product of LCA groups, etc., in Section \ref{App}.

\section{Preliminary}\label{Pre}
\subsection{Frames for subspaces} 
Frame generalizes the concepts of basis in a Hilbert space and helps to expand any vectors  in the series expansion in terms of frame elements \cite{aldroubi2017dynamical,bownik2000structure,bownik2007SMI, deboor1994structure}. Noways the  usual concept of frames has been extended for the case of frame for subspaces  which helps to include the series representation of any elements for subspaces also (see  \cite{christensen2005generalized,christensen2004oblique,jakobsen2016co}). Dual frames are the usual tools for such series representations while expansions \cite{christensen2004oblique}. In this article, we will discuss frames for subspaces and its associated duals in a Hilbert space where it is possible that the elements of the frame are not also elements of the subspace. Orthogonality of frames is a key concept to discuss duals for super Hilbert spaces which has enormous uses in multiple access communications and perfect reconstructions \cite{balan2000multiplexing,bhatt2007orthogonal,weber2004orthogonal}. The detailed study of orthogonal frames for the Euclidean setup  and locally compact abelian (LCA) group setup has been found in \cite{gumber2018orthogonality,gumber2019pairwisebuletin,kim2007pair,weber2004orthogonal}.

 Let $\mc H$ be a complex Hilbert space, and let $(\mathscr M,\sum_\mathscr M,\mu_\mathscr M)$ be a measure space, where $\Sigma_\mathscr M$ denotes $\sigma$-algebra and $\mu_{\mathscr M}$ the non-negative measure.
\begin{defn}
A family of vectors $\{f_k\}_{k\in \mathscr M}$ in $\mc H$ is called  a \textit{continuous $\mc K$-subspace frame}  (simply call as \textit{$\mc K$-subspace frame} \cite{weber2004orthogonal}, or \textit{basic frame} \cite{jakobsen2016co})   with respect to $(\mathscr  M,\sum_\mathscr M,\mu_\mathscr M)$ if
 $k \mapsto f_k$ is weakly measurable, i.e.,  the map $k \mapsto \langle f,f_k\rangle$ is measurable for each $ f\in \mc H$, and
 there exist constants $0<A\leq B<\infty$  such that
 \begin{equation}\label{framedefn}
 A\|f\|^2\leq\int_{\mathscr M} |\langle f,f_k\rangle |^2\  d{\mu_{\mathscr M}} (k)\leq B\|f\|^2 \quad \text{for all}
 \  f\in \mc K.
 \end{equation}
\end{defn}
If $\mc K=\mc H,$   then       $\{f_k\}_{k\in \mathscr M}$ is  a  \textit{frame for $\mc H$} (or \textit{total frame} \cite{jakobsen2016co}), and it is \textit{Bessel} in $\mc H$   when  only upper bound holds in (\ref{framedefn}),  and \textit{complete} when $\ol{\Span}  \{f_k\}_{k\in \mathscr M}=\mc H.$ 
\begin{defn}
For two Bessel families  $\{f_k\}_{k\in \mathscr M}$  and $\{g_k\}_{k\in \mathscr M}$   in $\mc H,$ if $\{g_k\}_{k\in \mathscr M}$ satisfies the following reproducing formula: 
\begin{equation}\label{subspace dual}
\int_{\mathscr M}  \langle f, g_{k} \rangle f_k\ d{\mu_{\msc M}} (k)    = f \  \text{for all}
\  f\in \mc K, 
\end{equation}	
then $\{g_{k}\}_{k\in \msc M}$ is called a \textit{$\mc K$-subspace dual} to   $\{f_k\}_{k\in \mathscr M}.$
\end{defn}
  Note that $\{f_k\}_{k\in \mathscr M}$ need not be a $\mc K$-subspace dual to $\{g_k\}_{k\in \mathscr M},$ but the family $\{g_{k}+h_{k}\}_{k\in \mathscr M}$ is    a $\mc K$-subspace dual to $\{f_k\}_{k\in \mathscr M}$ provided  
\begin{equation}\label{orthogonal}
\int_{\msc M}  \langle f, h_{k} \rangle g_k  \ d{\mu_{\msc M}} (k)   = 0 \  \text{for all}
\  f\in \mc K, 
\end{equation}	where $\{h_{k}\}_{k\in \msc M}$ is Bessel in $\mc H.$ Such $\{h_{k}\}_{k\in \msc M}$ satisfying (\ref{orthogonal}) is known as \textit{$\mc K$-subspace orthogonal} to  $\{f_k\}_{k\in \mathscr M} $  \cite{weber2004orthogonal}.

Every frame or a Bessel family is associated with an analysis operator, the range of which carries out a lot of information of a signal/image or function. Given a Bessel family $\mathcal X=\{f_k\}_{k \in \mathscr M}$  in $\mc H$  we define a bounded linear operator $T_{\mc X} :\H \ra L^2(\msc M) ,$ known as \textit{analysis operator},   by
\begin{equation}\label{AnalysisOp}
T_{\mc X}  (f)(k)=\langle f, f_k\rangle \   \mbox{for all}   \ k\in \msc M \ \mbox{and} \ f \in \H, 
\end{equation}
and  its adjoint operator $T_{\mc X}^*: L^2(\msc M)\ra \H,$ known as \textit{synthesis operator}, by
\begin{equation}\label{SynthesisOp}
  T_{\mc X}^* \psi  = \int_{\msc M}  \psi(k)  f_k  \  d{\mu_{\msc M}} (k)  \   \mbox{for all}   \  \psi \in L^2(\msc M),
\end{equation}
in the    weak sense. For   two Bessel families  $\mathcal X=\{f_k\}_{k \in \mathscr M}$ and $\mc Y:=\{g_{k}\}_{k\in \mathscr M}$ in $\mc H$ the operator $T_{\mc X}^* T_{\mc Y }: \mc H\ra \mc H$ given by $f \mapsto   \int_{\mc M} \langle f, g_{k} \rangle f_k  \ d{\mu_{\mc M}(k)}$ is known as \textit{mixed dual Gramian}. 
 \begin{defn}
Let $\mathcal X=\{f_k\}_{k \in \mathscr M}$ and $\mc Y:=\{g_{k}\}_{k\in \mathscr M}$ be two Bessel families  in $\mc H.$  \begin{enumerate}
\item[(i)] If  $T_{\mc X}^* T_{\mc Y } =I_{\ol{\Span}  \mc X},$ and  $\mc X,$ $\mc Y$ are the frames   for $\ol{\Span}  \mc X=\ol{\Span}  \mc Y \subseteq\mc H,$ then  $\mc X,$ $\mc Y$ are called as \textit{dual frames} to each other, where $I_{\ol{\Span}  \mc X}$ is an identity operator on $\ol{\Span}  \mc X.$
\item[(ii)] If    $T_{\mc X}^* T_{\mc Y }= 0$ on $\ol{\Span}  \mc X=\ol{\Span} \mc Y,$  then we call $\mc X$ and $\mc Y$ are     \textit{orthogonal Bessel  pair}.
\end{enumerate}    
\end{defn}
Orthogonality  plays a prominent role  to generalize dual frames for super Hilbert spaces \cite{weber2004orthogonal}. For more details on  orthogonal Bessel families, we refer \cite{gumber2018orthogonality, gumber2019pairwisebuletin,bownik2019multiplication,kim2007pair,weber2004orthogonal}. Next for the case of $\mc K$-subspace dual (\ref{subspace dual}) and $\mc K$-subspace orthogonal (\ref{orthogonal}) to  $\mathcal X=\{f_k\}_{k \in \mathscr M}$ we can write  $T_{\mc X}^*T_{\mc Y}\big|_{\mc K} = I_{\mc K}$ and $T_{\mc X}^*T_{\mc Y}\big|_{\mc K}=0,$ respectively. In particular,    if  $\mc X$ is a $\mc K$-subspace frame for  $\mc K= \ol{\Span}\{f_k\}_{k \in \msc M} ,$    and $T_{\mc X}^*T_{\mc Y}\big|_{\mc K} = I_{\mc K},$ then $\mc Y$ is    an   \textit{alternate dual}  to  $\mc X.$ We refer   \cite{christensen2004oblique, heil2009duals,  gabardo2007uniqueness}   for more details on alternate duals.  

Our aim is to discuss the aforementioned subspace dual and orthogonal frames for translation generated systems using the Zak transform and bracket map.
\subsection{Zak transform and Bracket}

 The \textit{Zak transform} $\mathcal Z$ of  $f\in L^1(\mathscr G)\cap L^2(\mathscr G)$ for the pair $(\mathscr G,\Gamma)$  is   defined by 
\begin{equation}\label{eq:ZakTransform}
(\mc Zf)(\alpha, \Gamma x) =\widehat{f^{\Gamma x}}(\alpha)=\int_{\Gamma} f^{ \Gamma x} (\gamma) \alpha (\gamma^{-1})\ \dalpha \  \mbox{a.e.} \  \alpha \in \widehat{\Gamma} \ \mbox{and} \   \Gamma x\in \Gamma\backslash \mathscr G,
\end{equation}
which is a    unitary linear transformation from $ L^2(\mathscr G)  $ to $L^2(\widehat{\Gamma}\times \Gamma\backslash \mathscr G)$ \cite{iverson2015subspaces}. The space $L^2(\widehat{\Gamma}\times \Gamma\backslash \mathscr G)$ is measure theoretic isomorphic to $L^2(\widehat{\Gamma}; L^2(\Gamma\backslash \mathscr G)).$ For $x \in  \mathscr G,$  $\Gamma x$ is a \textit{right coset} of $\Gamma$ in $\mathscr G$ with respect to $x.$ The   set        $\Gamma\backslash \mathscr G = \{\Gamma x: x \in  \mathscr G\}$ denotes    a collection of all right cosets of $\Gamma $ in $\mathscr G.$  
 For  a function   $f :\mathscr G \rightarrow \mathbb C,$     a  complex valued    function $f^{\Gamma x}$ on $\Gamma$ is given  by  
\begin{align}\label{borelsection}
f^{ \Gamma x}(\gamma)=f(\gamma \, \Xi(\Gamma x)), \quad     \gamma \in  \Gamma, 
\end{align}
where   $\Xi : \Gamma \backslash \mathscr G\ra  \mathscr G$ is a \textit{Borel section} for  the quotient space $\Gamma \backslash \mathscr G$ whose   existence    is  guaranteed by   \cite[Lemma 1.1]{Mackey1952InducedRepresentations}. The \textit{dual group} $\widehat{\Gamma}$ of $\Gamma$ is a collection of continuous homomorphisms from $\Gamma$ to the circle group $\mathbb T.$ 
In case of  locally compact abelian (LCA) group $\G,$ the  Zak transform $\mc Z$ has a sibling named as the \textit{fiberization map} $\mathscr T,$ 
\begin{align}\label{fiberizationmap}
\mathscr T:&L^2(\G)\ra L^2(\widehat {\G}/\Gamma^\perp; L^2(\Gamma^\perp))\ \mbox{ defined by}\\ \nonumber
 & (\mathscr Tf)(\omega \Gamma^\perp)(k)=\widehat{f}(\Theta(w\Gamma^\perp)k)  
\end{align}
is  unitary, where $f \in L^2(\G)$ $w\Gamma^\perp \in \widehat {\G}/\Gamma^\perp,$ and $k \in \Gamma^\perp.$
 The group $\Gamma^\perp=\{\omega\in \widehat \G:\omega (\gamma)=1\ \mbox{ for all} \  \gamma \in \Gamma\}$ and the quotient group $\G/\Gamma$ is same as $\Gamma\backslash \mathscr G.$ The Borel section $\Theta: \widehat {\G}/\Gamma^\perp \ra \widehat {\G}$ sends compact sets to pre-compact sets.  
 In the setup of discrete group $\Gamma,$ the Zak transform $\mc Z$ for the pair $(\G, \Gamma)$ can be rewritten  from (\ref{eq:ZakTransform})  as follows: for $f\in L^1(\G) \cap L^2(\G),$ $ (\mc Z f )(\alpha, \Gamma x)=\widehat{f^{ \Gamma x}} (\alpha)=\sum_{\gamma\in\Gamma} f^{ \Gamma x} (\gamma) \alpha (\gamma^{-1})   \ \mbox{for} \  \alpha \in \widehat{\Gamma} \ \mbox{and} \ \Gamma x \in \Gamma \backslash \G. $
 We begin with the notion of matrix elements for the left regular representation   \cite[Section 5.2]{folland2016course}.
	\begin{defn}
	For $\varphi, \psi \in L^2(\mathscr G),$ let  $\mc M_\varphi \psi:\Gamma\ra \mathbb C$ be   a function defined by 
	 		\begin{align}\label{Matrixelement}
	(\mc M_\varphi \psi)(\gamma) =\left\langle \psi, L_\gamma \varphi\right\rangle,    \  \gamma \in \Gamma.\nonumber
	\end{align}  
	Then $\mc M_\varphi \psi$ is known as a  \textit{matrix element} of the left regular representation associated with  $\varphi$ and $\psi.$
	\end{defn}

	In the sequel, we require the discrete-time Fourier transform of  $\mc M_\varphi \psi$  at a point of $\widehat \Gamma.$  	Recall  a \textit{discrete-time Fourier transform} $\widehat {z} (\alpha)$ of a sequence $z=(z(\gamma))\in \ell^2(\Gamma)$ at a point $\alpha \in \widehat \Gamma,$ defined by    $ 	\widehat{z}(\alpha)=\sum_{\gamma\in \Gamma} z(\gamma)\ol{\alpha(\gamma)}. $   The convergence of the series is interpreted as its limit in $L^2(\widehat \Gamma).$ We describe below the discrete-time Fourier transform of  $\mc M_\varphi \psi$ in terms of the Zak transform associated with the right coset $\Gamma \backslash \G.$

	\begin{lem}\label{L:matrixelement}
		 Let $\varphi, \psi \in L^2(\mathscr G)$ be such that the associated matrix element  $\mc M_\varphi \psi$ is a member of $\ell^2 (\Gamma).$ Then the discrete-time Fourier transform of   $\mc M_\varphi \psi$    at $\alpha \in \widehat \Gamma$  is 
		$$
		\widehat{(\mc M_\varphi \psi)}(\alpha)= [ \psi,  \varphi](\alpha),
		$$ 
		provided $[\psi,  \varphi](\cdot)\in L^2 (\widehat{\Gamma}),$  
		where    the complex valued function $[ \psi, \varphi](\cdot)$ on $\widehat \Gamma,$ known  as bracket map,   is given by
		\begin{equation}\label{eq:bracket}
		[ \psi,  \varphi](\alpha):=\int_{\Gamma \backslash \mathscr G}   \mc Z\psi(\alpha, \Gamma x)\, \overline{\mc Z\varphi(\alpha, \Gamma x)}\ d{\mu_{\Gamma \backslash \mathscr G}}(\Gamma x) \ \mbox{for} \  \alpha \in \widehat \Gamma. 
		\end{equation} 
	
		Moreover,   for a Bessel sequence $\mc E^{\Gamma} (\varphi)=\{L_\gamma\varphi: \gamma\in \Gamma\}$ in $L^2 (\G),$ 	$\mc M_\varphi \psi$ and $[ \psi,  \varphi](\cdot)$  are  members of $\ell^2 (\Gamma)$ and   $L^2(\widehat \Gamma),$ respectively, and hence the above result holds true.  
		\end{lem}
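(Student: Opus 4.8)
The plan is to push everything through the Zak transform: $\mc Z$ is a unitary from $L^2(\mathscr G)$ onto $L^2(\widehat{\Gamma}\times\Gamma\backslash\mathscr G)$, and left translation by an element of $\Gamma$ becomes multiplication by a character on the Zak side. First I would record the intertwining identity. For $\gamma\in\Gamma$ one has $(L_\gamma\varphi)^{\Gamma x}(\eta)=\varphi(\gamma^{-1}\eta\,\Xi(\Gamma x))=\varphi^{\Gamma x}(\gamma^{-1}\eta)$, so a change of variable in the (discrete) defining sum of (\ref{eq:ZakTransform}) gives
\[
\mc Z(L_\gamma\varphi)(\alpha,\Gamma x)=\overline{\alpha(\gamma)}\,\mc Z\varphi(\alpha,\Gamma x)\qquad\text{for a.e.\ }(\alpha,\Gamma x).
\]
This is first verified on the dense subspace $L^1(\mathscr G)\cap L^2(\mathscr G)$ and then extended to all $\varphi\in L^2(\mathscr G)$ by continuity of $\mc Z$ and of $L_\gamma$; equivalently one argues directly with the slices $\varphi^{\Gamma x}$, which belong to $\ell^2(\Gamma)$ for a.e.\ $\Gamma x$ by the Weil formula.

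Next, apply Parseval for the unitary $\mc Z$:
\[
(\mc M_\varphi\psi)(\gamma)=\langle\psi,L_\gamma\varphi\rangle=\langle\mc Z\psi,\mc Z(L_\gamma\varphi)\rangle=\int_{\widehat\Gamma}\int_{\Gamma\backslash\mathscr G}\mc Z\psi(\alpha,\Gamma x)\,\overline{\mc Z\varphi(\alpha,\Gamma x)}\;\alpha(\gamma)\;\dGammax\,\dalpha .
\]
The inner integral is absolutely convergent for a.e.\ $\alpha$ by Cauchy--Schwarz on the slices $\mc Z\psi(\alpha,\cdot),\mc Z\varphi(\alpha,\cdot)\in L^2(\Gamma\backslash\mathscr G)$, so it equals $[\psi,\varphi](\alpha)$; and a second Cauchy--Schwarz, now jointly in $(\alpha,\Gamma x)$, gives $\|[\psi,\varphi]\|_{L^1(\widehat\Gamma)}\le\|\psi\|_2\|\varphi\|_2$, so $[\psi,\varphi]\in L^1(\widehat\Gamma)$ unconditionally. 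Hence
\[
(\mc M_\varphi\psi)(\gamma)=\int_{\widehat\Gamma}[\psi,\varphi](\alpha)\,\alpha(\gamma)\,\dalpha\qquad(\gamma\in\Gamma),
\]
that is, the sequence $\mc M_\varphi\psi$ is exactly the sequence of Fourier coefficients of the $L^1$ function $[\psi,\varphi]$ on the compact group $\widehat\Gamma$ (in the normalisation for which $\{\alpha\mapsto\alpha(\gamma)\}_{\gamma\in\Gamma}$ is an orthonormal basis of $L^2(\widehat\Gamma)$).

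To finish, assume $\mc M_\varphi\psi\in\ell^2(\Gamma)$ and $[\psi,\varphi]\in L^2(\widehat\Gamma)$. By Plancherel for the discrete abelian group $\Gamma$, the discrete-time Fourier transform $\widehat{(\mc M_\varphi\psi)}$ lies in $L^2(\widehat\Gamma)$ and has $\mc M_\varphi\psi$ as its sequence of Fourier coefficients as well. Two functions in $L^1(\widehat\Gamma)$ with the same Fourier coefficients agree a.e.\ (trigonometric polynomials are dense in $C(\widehat\Gamma)$), so $\widehat{(\mc M_\varphi\psi)}=[\psi,\varphi]$ in $L^2(\widehat\Gamma)$, which is the asserted formula. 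For the ``moreover'' part, if $\mc E^\Gamma(\varphi)=\{L_\gamma\varphi:\gamma\in\Gamma\}$ is Bessel in $L^2(\mathscr G)$ with bound $B$, then taking $f=\psi$ in the Bessel inequality gives $\sum_{\gamma\in\Gamma}|(\mc M_\varphi\psi)(\gamma)|^2=\sum_{\gamma\in\Gamma}|\langle\psi,L_\gamma\varphi\rangle|^2\le B\|\psi\|_2^2<\infty$, so $\mc M_\varphi\psi\in\ell^2(\Gamma)$; its Fourier transform then belongs to $L^2(\widehat\Gamma)$ and, by the coefficient identity above, equals $[\psi,\varphi]$, so $[\psi,\varphi]\in L^2(\widehat\Gamma)$ automatically and both hypotheses hold.

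The only delicate points are bookkeeping: getting the conjugations and the $\gamma\leftrightarrow\gamma^{-1}$ symmetry right so that the outcome is $[\psi,\varphi](\alpha)$ rather than a reflected version $[\psi,\varphi](\alpha^{-1})$, and justifying the passage from $L^1\cap L^2$ to $L^2$ (equivalently, that $\varphi^{\Gamma x}\in\ell^2(\Gamma)$ for a.e.\ $\Gamma x$, legitimizing the slice computation). There is no substantive analytic obstacle beyond these.
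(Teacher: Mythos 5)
Your proposal is correct and follows essentially the same route as the paper: unitarity of $\mc Z$, the intertwining identity $\mc Z(L_\gamma f)(\alpha,\Gamma x)=\alpha(\gamma^{-1})\mc Zf(\alpha,\Gamma x)$, and the recognition that the sequence $\mc M_\varphi\psi$ is precisely the sequence of Fourier coefficients of the $L^1(\widehat\Gamma)$ function $[\psi,\varphi]$, so that the discrete-time Fourier transform recovers $[\psi,\varphi]$ by Fourier inversion on the compact group $\widehat\Gamma$. The one place where you genuinely diverge is the ``moreover'' part: the paper verifies $[\psi,\varphi]\in L^2(\widehat\Gamma)$ by hand, using the pointwise Bessel bound $[\varphi,\varphi](\alpha)\le B$ together with Cauchy--Schwarz on the fibers, whereas you deduce it for free from injectivity of the Fourier transform on $L^1(\widehat\Gamma)$ (the DTFT of the $\ell^2$ sequence $\mc M_\varphi\psi$ is an $L^2\subset L^1$ function with the same Fourier coefficients as $[\psi,\varphi]$, hence equals it a.e.). Your version is slightly more economical and in fact shows the hypothesis $[\psi,\varphi]\in L^2(\widehat\Gamma)$ is redundant once $\mc M_\varphi\psi\in\ell^2(\Gamma)$; the paper's version has the minor advantage of exhibiting the explicit quantitative bound $\|[\psi,\varphi]\|_{L^2}^2\le B\|\psi\|^2$, which it reuses elsewhere. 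The sign/conjugation bookkeeping you flag does work out as you state, since $\alpha(\gamma^{-1})=\overline{\alpha(\gamma)}$ and the paper's DTFT convention is $\widehat z(\alpha)=\sum_\gamma z(\gamma)\overline{\alpha(\gamma)}$.
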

	\begin{proof}  Since $\mc Z$ is unitary, the discrete-time  Fourier transform of $\{\mc M_\varphi \psi (\gamma)\}_{\gamma \in \Gamma}\in \ell^2 (\Gamma)$ at $\alpha \in \widehat \Gamma$ is, 
			\begin{align*}
			\widehat{(\mc M_\varphi \psi)}(\alpha)= &\sum_{\gamma\in \Gamma} \langle \psi, L_{\gamma} \varphi \rangle \alpha(\gamma^{-1})  =
			\sum_{\gamma\in \Gamma}\langle\mc Z\psi,\mc Z(L_{\gamma} \varphi)\rangle_{L^2(\hat{\Gamma} \times   \Gamma\backslash \G)} \  \alpha(\gamma^{-1}).
			\end{align*}
		 Employing  the intertwining property of the Zak transform $\mc Z$ on the  left translation with modulation, i.e.,  for  $f \in L^1 (\G) \cap L^2(\G),$ $\gamma \in \Gamma,$ $\alpha \in \widehat{\Gamma},$ $x \in \G,$  and  using		   (\ref{eq:ZakTransform}) and (\ref{borelsection}), 
		\begin{align*}
		\mc Z (L_\gamma f)(\alpha, \Gamma x)= &  \sum_{\lambda \in \Gamma} (L_\gamma f)^{\Gamma x} (\lambda) \alpha (\lambda^{-1}) =  \sum_{\lambda \in \Gamma} f ((\gamma^{-1} \lambda) \Xi (\Gamma x)) \alpha (\lambda^{-1})\\
		= &  \sum_{\lambda \in \Gamma}   f^{\Gamma x} (\gamma^{-1} \lambda) \alpha (\lambda^{-1})  = \alpha(\gamma^{-1})\mc Zf(\alpha, \Gamma x).
		\end{align*}
   Then  we obtain  
			\begin{align}\label{eq:DFT-matrix}
			\widehat{(\mc M_\varphi \psi)}(\alpha) =  \sum_{\gamma\in \Gamma} \alpha(\gamma^{-1}) \int_{\widehat{\Gamma}}  \beta(\gamma) \langle\mc Z\psi(\beta), \mc Z\varphi(\beta)\rangle_{L^2(\Gamma\backslash \G)}\ d{\mu_{\widehat \Gamma}}(\beta)
			=  \sum_{\gamma\in \Gamma} \alpha(\gamma^{-1}) \zeta(\gamma), 
			\end{align}
where for $\gamma \in \Gamma,$ the function $\zeta(\gamma):=\int_{\widehat\Gamma}    {\beta(\gamma)} \langle \mc Z\psi(\beta),\mc Z\varphi(\beta)\rangle\ d{\mu_{\widehat \Gamma}}(\beta)$ is identical with $\mc M_\varphi \psi (\gamma).$
			The sequence  $\{\zeta(\gamma)\}_{\gamma \in \Gamma} \in \ell^2 (\Gamma)$ since  $\{\mc M_\varphi \psi (\gamma)\}_{\gamma \in \Gamma}\in \ell^2 (\Gamma).$ 
		Further, we can write (\ref{eq:DFT-matrix}) for $\alpha \in \widehat \Gamma$ as follows:
		  		\begin{align*}
		\widehat{(\mc M_{\varphi} \psi)}(\alpha) &=  \sum_{\gamma\in \Gamma} \alpha(\gamma^{-1}) \zeta(\gamma) 
		 =\sum_{\gamma\in \Gamma} \left( \int_{\widehat\Gamma}    {\beta(\gamma)} \langle \mc Z\psi(\beta),\mc Z\varphi(\beta) \rangle\ d{\mu_{\widehat \Gamma}}(\beta) \right)\alpha(\gamma^{-1})\\
		  &= \sum_{\gamma\in \Gamma} \left( \int_{\widehat\Gamma}    {\beta(\gamma)} \left( \int_{\Gamma\backslash \G} \mc Z\psi(\beta, \Gamma x) \overline{\mc Z\varphi(\beta, \Gamma x)}\ d{\mu_{\Gamma\backslash \G}}(\Gamma x)	\right)	 	 d{\mu_{\widehat \Gamma}}(\beta) \right)\alpha(\gamma^{-1})\\
		  &= \sum_{\gamma\in \Gamma}  \left(\int_{\widehat\Gamma}      [ \psi,   \varphi](\beta)  {\beta(\gamma)}\	 	 d{\mu_{\widehat \Gamma}} (\beta) \right)\alpha(\gamma^{-1}), 
		 	\end{align*}
		 where 	for $\beta \in \widehat{\Gamma},$ $[\psi,  \varphi](\beta)$ is defined by 
		 $ 		 [\psi,   \varphi](\beta)=\int_{\Gamma\backslash \G} \mc Z\psi(\beta, \Gamma x) \overline{\mc Z\varphi(\beta, \Gamma x)}\ d{\mu_{\Gamma\backslash \G}}(\Gamma x).$
		Also  by identifying  $\Gamma$ to $\widehat{\widehat{\Gamma}}$ as  $\gamma \mapsto \widehat{\gamma}$ and noting that $\widehat{\widehat{\Gamma}}$ is an orthonormal basis for $L^2 (\widehat{\Gamma}),$  we can write 
		 		\begin{align*}
		 		\widehat{(\mc M_{\varphi}\psi)}(\alpha)  =&  \sum_{\gamma\in \Gamma} \left( \int_{\widehat\Gamma}    [\psi,   \varphi](\beta)  \widehat{\gamma}(\beta)\ d{\mu_{\widehat \Gamma}}(\beta) \right) \overline{\widehat{\gamma}(\alpha)} 
		 		 =  \sum_{\gamma\in \Gamma} \left\langle    [\psi,   \varphi](\cdot),  \overline{\widehat{\gamma}}(\cdot)\right \rangle  \overline{\widehat{\gamma}(\alpha)}
		 		 =   [\psi,   \varphi](\alpha),
		 		\end{align*}
		 provided		 $ [\psi,  \varphi](\cdot)\in L^2 (\widehat{\Gamma}).$  	
		
	For the moreover part, assume that the $\Gamma$-TG system $\mc E^\Gamma (\varphi)$	 is Bessel. Then for all $f\in L^2 (\G),$ we have the inequality $\sum_{\gamma \in \Gamma} |<f, L_\gamma \varphi>|^2 \leq B \|f\|^2$
	 for some constant $B>0,$ and hence by choosing $f=\psi,$ we get $\mc M_{\varphi}\psi$ as a member of $\ell^2 (\Gamma).$ Also the Bessel  property of   $\mc E^\Gamma (\varphi)$	   implies   $[ \varphi,  \varphi](\alpha) \leq B$   a.e. $\alpha \in \widehat \Gamma$   \cite{iverson2015subspaces},   and hence  	  using the Cauchy-Schwarz inequality
	 \begin{align*}
	 \int_{\widehat \Gamma}\Big|[\psi, \varphi](\alpha)\Big|^2\ \dalpha = & \int_{\widehat \Gamma} \Big|\int_{\Gamma \backslash \G}\mc Z\psi (\alpha, \Gamma x)\ol{\mc Z\varphi(\alpha, \Gamma x)}\ \dGammax\Big|^2  \dalpha\\
	 \leq & \int_{\widehat \Gamma}  \left(\int_{ \Gamma \backslash \mathscr G}\Big|\mc 
	 Z \psi(\alpha, \Gamma x)\Big|^2\ \dGammax\right) 
	  \left(\int_{ \Gamma \backslash \mathscr G}\Big|\mc 
	 Z \varphi(\alpha, \Gamma x)\Big|^2\ \dGammax\right)\dalpha\\
	 	=& \int_{\widehat \Gamma}  [ \psi,  \psi](\alpha) [ \varphi,  \varphi](\alpha)\ \dalpha\\
	 	 \leq &  B \int_{\widehat \Gamma}  [  \psi,  \psi](\alpha) \ \dalpha= B \|\mc Z \psi\|^2 
	 	=  B \|\psi\|^2.
	 \end{align*}
	  \end{proof}
Also note  that for $\varphi, \psi\in L^2(\msc G), [\varphi, \psi](.)\in L^1(\widehat \Gamma) .$
	When the locally compact group $\G$ becomes abelian, \textit{denoted by} $\mc G,$ the groups   	$\widehat {\mc G}/\Lambda^\perp $ and $\widehat{\mc G/\Lambda}$ are topologically isomorphic to $\widehat \Lambda$  and $\Lambda^\perp,$ respectively \cite{folland2016course},  where  $\Lambda$ is a closed discrete subgroup of $\mc G.$   Instead of the Zak transform $\mc Z$ for the pair $(\G, \Gamma),$ we will use the fiberization map (\ref{fiberizationmap})  for the pair   $(\mc G, \Lambda).$ 
	  
	   For any   $\varphi, \psi \in L^2(\mc G),$ \textit{the bracket map using the fiberization will be denoted by}  $[\psi, \varphi]_{\mathscr T} (\cdot)$ to distinguished it from the earlier notation $[\varphi,\psi](\cdot)$ using the Zak transform. Next we obtain  a relation  between $[\psi, \varphi] (\cdot)$ and $[\psi,  \varphi]_{\mathscr T}(\cdot)$ in the setup of locally compact abelian (LCA) group  $\mc G$ and its closed discrete subgroup $\Lambda,$ where 
		\begin{equation}\label{eq:bracket-abelain}
	[\psi, \varphi]_{\mathscr T}(\omega \Lambda^\perp):=\int_{\Lambda^\perp}  \mathscr T\psi(\omega\Lambda^\perp)(\xi)\ol{\mathscr T\varphi(\omega\Lambda^\perp)(\xi)}\ d{\mu_{\Lambda^\perp}}(\xi) \   \ \mbox{for} \  \omega \Lambda^\perp \in \widehat{\mc G}/\Lambda^\perp, 
	\end{equation} 
	which is  a some kind of reminiscence of \cite{iverson2015subspaces}. 
	Since 	$[\psi, \varphi] (\cdot) \in L^1 (\widehat\Lambda)$    the Fourier transform $\mc F$ of $[\psi,\varphi] (\cdot)$ at $\lambda \in \Lambda$  can be written as  
	 	\begin{align*}
	 		 	\mc F[\psi, \varphi](\lambda)&=\int_{\widehat \Lambda}[\psi,\varphi](\beta) \ol{\beta(\lambda)}\ d{\mu_{\widehat \Lambda}}(\beta)=\int_{\widehat \Lambda}\left(\int_{\Lambda\backslash\mc G}\mc Z\psi(\beta, \Lambda x)\ol{\mc Z\varphi(\beta, \Lambda x) }\ d{\mu_{\Lambda\backslash \mc G}}\right)\ol{{\beta}(\lambda)}d{\mu_{\widehat \Lambda}}(\beta)\\
	 		 	&=\int_{\widehat \Lambda}\langle \mc Z\psi(\beta),\mc Z\varphi(\beta)\rangle\ol{{\beta}(\lambda)}\ d{\mu_{\widehat \Lambda}}(\beta)=\int_{\widehat\Lambda}\langle \mc Z(L_{\lambda}\psi)(\beta),\mc Z\varphi(\beta)\rangle\  d{\mu_{\widehat \Lambda}}(\beta)\\
	 		 	&=\langle L_{\lambda}\psi,  \varphi\rangle.
	 	\end{align*}
	 Since 	$[\psi, \varphi]_{\mathscr T} (\cdot) \in L^1 (\widehat{\mc G}/\Lambda^\perp),$ from the similar calculations of    Lemma \ref{L:matrixelement},  and the  groups $\widehat {\mc G}/\Lambda^\perp$ and 	$\widehat \Lambda$ are topologically  isomorphic,     the  Fourier transform $\mc F$ of $[ \psi, \varphi]_{\mathscr T} (\cdot)$ at $\lambda \in \Lambda$  can be written as  follows:
		\begin{align*}
	 		\mc F[\psi,\varphi]_{\mathscr T}(\lambda) & =\int_{\widehat \Lambda}[\psi, \varphi]_{\mathscr T}(\omega \Lambda^\perp)\ol{\omega(\lambda)}\ d{\mu_{\widehat \Lambda}}(\omega|_{\Lambda})
	 		=\int_{\widehat \Lambda} \left(\int_{\Lambda^\perp}  \mathscr T\psi(\omega\Lambda^\perp)(\xi)\ol{\mathscr T\varphi(\omega\Lambda^\perp)(\xi)}d{\mu_{\Lambda^\perp}}(\xi) \right)\ol{\omega(\lambda)}\ d{\mu_{\widehat \Lambda}}(\omega|_{\Lambda}) \  \\
	 			&=\int_{\widehat \Lambda}\langle \mathscr T\psi(\omega\Lambda^\perp), \mathscr T\varphi(\omega\Lambda^\perp)\rangle\ol{\omega(\lambda)}\ d{\mu_{\widehat \Lambda}}(\omega|_{\Lambda}) 
	 			=\int_{\widehat {\mc G}/\Lambda^\perp}\langle \mathscr T\psi(\omega\Lambda^\perp), \mathscr T\varphi(\omega\Lambda^\perp)\rangle\ol{\omega(\lambda)}\ d{\mu_{\widehat {\mc G}/\Lambda^\perp}}(w\Lambda^\perp).
 			\end{align*} 
	 		Employing the unitary property of the fiberization map $\mathscr T,$ 
	 	we have 	
	 	\begin{align*}			
	 	\mc F[\psi, \varphi]_{\mathscr T}(\lambda) 	&	=\int_{\widehat {\mc G}/\Lambda^\perp}\langle \mathscr T L_{\lambda}\psi(\omega\Lambda^\perp),  \mathscr T \varphi(\omega\Lambda^\perp)\rangle\  d{\mu_{\widehat{\mc G}/\Lambda^\perp}}(\omega\Lambda^\perp)=\langle L_{\lambda} \psi, \varphi\rangle,
	 	\end{align*}
	 	since    the fiberization map $\mathscr T$  intertwines   left translation with modulation, i.e., 
	 \begin{align*}
	 \mathscr TL_\lambda f(\omega \Lambda^\perp)(\xi)=&\widehat{(L_\lambda f)}(\Theta(\omega\Lambda^\perp)\xi)=\Theta(\omega\Lambda^\perp)(\lambda^{-1}) \xi(\lambda^{-1})\mathscr Tf(\omega\Lambda^\perp)(\xi)
	 =\omega(\lambda^{-1})\mathscr Tf(\omega\Lambda^\perp)(\xi),
	 	\end{align*}
	 	as Fourier transform intertwines left translation with modulation and $\Theta: \widehat{\mc G}/\Lambda{^\perp}\ra \widehat{\mc G} $ is a Borel section $\Theta(\omega\Lambda^\perp)=\omega\eta$  for some $\eta\in \Lambda^{\perp}$ and $\eta(\lambda^{-1})=\xi(\lambda^{-1})=1.$
	Therefore for all $\lambda \in \Lambda,$ we have 	$\mc F[\psi, \varphi](\lambda)=	\mc F[\psi, \varphi]_{\mathscr T}(\lambda)$ which implies 
\begin{align}\label{zak-fib}
	[\psi, \varphi](\omega|_\Lambda)=[\psi, \varphi]_{\mathscr T}(\omega \Lambda^\perp) \ \a.e. \ \omega \in \widehat {\mc G} 
\end{align}
	 since  the Fourier transform $\mc F:L^1(\widehat\Lambda)\ra C_{0}(\Lambda)$ is   injective.   Thus by using the relation (\ref{zak-fib}),  we state the following result analogous to Lemma \ref{L:matrixelement}   for the case of an LCA group $\mc G$ and its closed discrete subgroup $\Lambda$ in terms of fiberization. In particular  the same result can be realized for the case of uniform lattice $\Lambda.$ By a \textit{uniform lattice} $\Lambda,$ we mean it is a closed discrete subgroup of an LCA group $\mc G$ such that    $\mc G/ \Lambda$ is compact.

	 	   \begin{lem} \label{L:zak-fib}
	 	   	Let $\mc G$ be  a locally compact abelian group and $\Lambda$ be a closed discrete subgroup of  $\mc G.$ If  $\varphi, \psi \in L^2(\mc G)$  such that the   matrix element  $\mc M_\varphi \psi$ is a member of $\ell^2 (\Lambda),$ then the discrete-time Fourier transform of   $\mc M_\varphi \psi$   in terms of the fiberization  for the pair  abelian $(\mc G, \Lambda)$  is  
	 	   	$$
	 	   	\widehat{(\mc M_\varphi \psi)}(\omega|_\Lambda)= [\psi, \varphi]_{\mathscr T}(\omega \Lambda^\perp), \quad \omega \Lambda^\perp \in \widehat{\mc G}/\Lambda^\perp
	 	   	$$ 
	 	   	provided $[\psi, \varphi]_{\mathscr T}(\cdot)\in L^2 (\widehat{\mc G}/\Lambda^\perp),$  
	 	   	where    the complex valued function $[\psi, \varphi]_{\mathscr T}(\cdot)$ on $\widehat{\mc G}/\Lambda^\perp$  is given by (\ref{eq:bracket-abelain}).
	 	   	Moreover,   for a Bessel sequence $\mc E^{\Lambda} (\varphi)$ in $L^2 (\mc G),$ 	$\mc M_\varphi \psi$ and $[ \psi,  \varphi]_{\mathscr T}(\cdot)$  are  members of $\ell^2 (\Lambda)$ and   $L^2(\widehat{\mc G}/\Lambda^\perp),$ respectively, and hence the above result holds true.  
	 	   	 	   	\end{lem}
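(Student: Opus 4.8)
The plan is to obtain this as an immediate consequence of Lemma \ref{L:matrixelement} applied to the pair $(\mc G,\Lambda)$, combined with the identity (\ref{zak-fib}) already established above, once the topological isomorphism between $\widehat{\mc G}/\Lambda^\perp$ and $\widehat\Lambda$ given by $\omega\Lambda^\perp\mapsto\omega|_\Lambda$ is used to transport measures and function spaces.

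First I would note that, with the Haar measures on $\widehat\Lambda$ and $\widehat{\mc G}/\Lambda^\perp$ normalised as in Section \ref{Pre} (the normalisations under which both $\mc Z$ and $\mathscr T$ are unitary), the restriction map $\omega\Lambda^\perp\mapsto\omega|_\Lambda$ is a measure isomorphism; consequently a function $h$ on $\widehat\Lambda$ belongs to $L^2(\widehat\Lambda)$ if and only if the corresponding function of $\omega\Lambda^\perp$ belongs to $L^2(\widehat{\mc G}/\Lambda^\perp)$, and the discrete-time Fourier transforms over $\Lambda$ computed on the two sides agree. This is precisely the identification exploited in the computation of $\mc F[\psi,\varphi]_{\mathscr T}$ preceding the statement.

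Next, since $\Lambda$ is abelian and discrete, Lemma \ref{L:matrixelement} gives, under the hypothesis $\mc M_\varphi\psi\in\ell^2(\Lambda)$, that $\widehat{(\mc M_\varphi\psi)}(\alpha)=[\psi,\varphi](\alpha)$ for a.e.\ $\alpha\in\widehat\Lambda$, provided $[\psi,\varphi](\cdot)\in L^2(\widehat\Lambda)$. By (\ref{zak-fib}) one has $[\psi,\varphi](\omega|_\Lambda)=[\psi,\varphi]_{\mathscr T}(\omega\Lambda^\perp)$ for a.e.\ $\omega\in\widehat{\mc G}$, and the measure isomorphism of the first step turns the assumption $[\psi,\varphi]_{\mathscr T}(\cdot)\in L^2(\widehat{\mc G}/\Lambda^\perp)$ into $[\psi,\varphi](\cdot)\in L^2(\widehat\Lambda)$. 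Substituting through the isomorphism yields $\widehat{(\mc M_\varphi\psi)}(\omega|_\Lambda)=[\psi,\varphi]_{\mathscr T}(\omega\Lambda^\perp)$, which is the asserted formula.

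For the moreover part, I would invoke the Bessel case of Lemma \ref{L:matrixelement}: if $\mc E^\Lambda(\varphi)$ is Bessel then $\mc M_\varphi\psi\in\ell^2(\Lambda)$ and $[\psi,\varphi](\cdot)\in L^2(\widehat\Lambda)$ automatically, and transporting the latter across the isomorphism shows $[\psi,\varphi]_{\mathscr T}(\cdot)\in L^2(\widehat{\mc G}/\Lambda^\perp)$, so the hypotheses of the main assertion hold and the conclusion follows; alternatively the Bessel estimate of Lemma \ref{L:matrixelement} can be rerun verbatim with $\mathscr T$ and (\ref{eq:bracket-abelain}) in place of $\mc Z$ and (\ref{eq:bracket}), using $[\varphi,\varphi]_{\mathscr T}(\cdot)\le B$ a.e. The only point needing care — and the main, though mild, obstacle — is the compatibility of the Haar-measure normalisations on $\widehat\Lambda$ and $\widehat{\mc G}/\Lambda^\perp$ in the first step, since an incompatible choice would introduce a spurious constant into the identity; with the normalisations already fixed in Section \ref{Pre} this is automatic, and the rest is a direct transcription of the Zak-transform version.
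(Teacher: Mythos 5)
Your proposal is correct and follows essentially the same route as the paper: the authors likewise derive this lemma directly from Lemma \ref{L:matrixelement} together with the identity (\ref{zak-fib}) and the topological isomorphism $\widehat{\mc G}/\Lambda^\perp\cong\widehat\Lambda$, with no separate argument given. Your added remarks on measure normalisation and the alternative Bessel estimate are sensible but not needed beyond what the paper already implicitly uses.
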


\section{Subspace  dual   of a frame  by the action of a discrete abelian subgroup}\label{s:discrete-translation}
 Throughout the section,  we assume    that $\Gamma$ is  a   closed discrete abelian subgroup of a   second countable    locally compact group $\mathscr G.$    In this section we study    $S^\Gamma(\mc A)$-subspace duals of a Bessel/frame sequence $\mc E^{\Gamma}(\mc A)$ in $L^2 (\mathscr G)$ in terms of 
the Zak transform for the pair $(\G, \Gamma).$  Such study on the pair $(\G, \Gamma)$ allows to access the various number of previously inaccessible pairs, like $(\mathbb R^n, \mathbb Z^m),$ $(\mathbb Z^n, \mathbb Z^m),$ $(\mathbb Z_N^n, \mathbb Z_N^m),$ etc., where $n \geq m$ and $\mathbb Z_N$ is a group modulo $N.$ 
  In the present  section,  we   discuss subspace dual and orthogonal  frames  for $\Gamma$-TI  spaces generated by a countable number of functions, $\mc A=\{\varphi_t:t\in \mc N\}$  in $L^2(\G),$  where $\mc N$ is the $\sigma$-finite measure space having counting measure. We refer \cite{bhatt2007orthogonal,christensen2005generalized, kim2007pair,  weber2004orthogonal} regarding  the   orthogonality and duality  results of a frame on the Euclidean spaces and LCA groups using the Fourier transform.

The following result will play an important role to  study   the duals of an $\Gamma$-TG system using the Zak transform. As an additional point of reference, it expresses the transition from the role of $\Gamma$ to $\widehat \Gamma.$

\begin{lem}\label{L:BesselCharecterization}
	Let $\varphi$ and $\psi$ be two functions in $L^2(\mathscr G)$ be such that  the corresponding $\Gamma$-TG  systems  defined as (\ref{TIsystem}),  $\mc  E^{\Gamma}(\varphi)=\{L_\gamma\varphi:\gamma\in \Gamma\}$ and  $\mc E^{\Gamma}(\psi)=\{L_\gamma \psi:\gamma\in \Gamma\}$  are Bessel.  Then  for all $f, g \in L^2(\G),$ we have 
	\begin{align*}
	\sum_{\gamma\in \Gamma}\left \langle f, L_{\gamma}\varphi\right \rangle\langle L_{\gamma}\psi,g\rangle 
	=\int_{\widehat {\Gamma}} [f,\varphi](\alpha) [\psi,g](\alpha) \ d{\mu_{\widehat \Gamma}}(\alpha).
	\end{align*}
	Moreover, 	when the pair $(\G, \Gamma)$ is an abelian pair $(\mc G, \Lambda)$   for $f, g \in L^2(\mc G)$ and a.e. $\omega\Lambda^\perp\in \widehat{\mc G}/\Lambda^{\perp},$   $$\sum_{\lambda\in\Lambda}\left \langle f, L_{\lambda}\varphi\right \rangle\langle L_{\lambda}\psi,g\rangle 
=\int_{\widehat{\mc G}/\Lambda^{\perp}} [\msc Tf, \msc T\varphi](\omega \Lambda^\perp) [\msc T\psi,\msc T g](\omega \Lambda^\perp) \ d{\mu_{\widehat{\mc G}/\Lambda^{\perp}}}(\omega \Lambda^\perp)$$ in terms of  the fiberization.  
\end{lem}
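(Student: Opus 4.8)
The plan is to recognise the left-hand side as an $\ell^{2}(\Gamma)$ inner product of two matrix elements of the left regular representation, and then transport it to $L^{2}(\widehat\Gamma)$ by the Plancherel theorem, at which point Lemma~\ref{L:matrixelement} identifies the two transformed sequences with the bracket maps appearing on the right-hand side. Concretely, using that $\langle L_\gamma\psi, g\rangle=\overline{\langle g, L_\gamma\psi\rangle}=\overline{(\mc M_\psi g)(\gamma)}$ while $\langle f, L_\gamma\varphi\rangle=(\mc M_\varphi f)(\gamma)$, the left side becomes
$$
\sum_{\gamma\in\Gamma}\langle f, L_{\gamma}\varphi\rangle\langle L_{\gamma}\psi, g\rangle=\sum_{\gamma\in\Gamma}(\mc M_\varphi f)(\gamma)\,\overline{(\mc M_\psi g)(\gamma)}=\big\langle \mc M_\varphi f,\ \mc M_\psi g\big\rangle_{\ell^{2}(\Gamma)}.
$$
The Bessel hypothesis on $\mc E^{\Gamma}(\varphi)$ (resp.\ on $\mc E^{\Gamma}(\psi)$) guarantees, via the \emph{moreover} part of Lemma~\ref{L:matrixelement}, that $\mc M_\varphi f\in\ell^{2}(\Gamma)$ (resp.\ $\mc M_\psi g\in\ell^{2}(\Gamma)$) and that $[f,\varphi](\cdot),[g,\psi](\cdot)\in L^{2}(\widehat\Gamma)$, so every series and integral below is absolutely convergent.

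Next, since $\Gamma$ is discrete, $\widehat\Gamma$ is compact and $\{\widehat\gamma:\gamma\in\Gamma\}$ is an orthonormal basis of $L^{2}(\widehat\Gamma)$ (exactly as used inside the proof of Lemma~\ref{L:matrixelement}); hence the discrete-time Fourier transform $z\mapsto\widehat z$ is unitary from $\ell^{2}(\Gamma)$ onto $L^{2}(\widehat\Gamma)$. Applying Parseval's identity and then Lemma~\ref{L:matrixelement}, which gives $\widehat{\mc M_\varphi f}=[f,\varphi](\cdot)$ and $\widehat{\mc M_\psi g}=[g,\psi](\cdot)$, we obtain
$$
\big\langle \mc M_\varphi f,\ \mc M_\psi g\big\rangle_{\ell^{2}(\Gamma)}=\big\langle [f,\varphi](\cdot),\ [g,\psi](\cdot)\big\rangle_{L^{2}(\widehat\Gamma)}=\int_{\widehat\Gamma}[f,\varphi](\alpha)\,\overline{[g,\psi](\alpha)}\ d{\mu_{\widehat\Gamma}}(\alpha).
$$
Finally, the defining formula \eqref{eq:bracket} shows at once that $\overline{[g,\psi](\alpha)}=[\psi,g](\alpha)$ for a.e.\ $\alpha\in\widehat\Gamma$, and substituting this yields the asserted identity.

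For the \emph{moreover} part, one runs the identical argument with Lemma~\ref{L:zak-fib} in place of Lemma~\ref{L:matrixelement}: under the Bessel hypotheses the discrete-time Fourier transform of $\mc M_\varphi f$ (resp.\ $\mc M_\psi g$) equals $[\msc Tf,\msc T\varphi](\cdot)$ (resp.\ $[\msc Tg,\msc T\psi](\cdot)$) as a function on $\widehat{\mc G}/\Lambda^{\perp}$, which is compact and topologically isomorphic to $\widehat\Lambda$; Parseval over $\widehat{\mc G}/\Lambda^{\perp}$ together with the conjugation identity $\overline{[\msc Tg,\msc T\psi](\omega\Lambda^\perp)}=[\msc T\psi,\msc Tg](\omega\Lambda^\perp)$ coming from \eqref{eq:bracket-abelain} then gives the stated formula. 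Alternatively, one can simply invoke the pointwise relation \eqref{zak-fib} to replace each $[\cdot,\cdot]$ by the corresponding $[\cdot,\cdot]_{\msc T}$ directly in the identity already established.

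I do not expect a genuine obstacle here; the only point requiring care is the integrability bookkeeping. One must use \emph{both} Bessel hypotheses — one for each generator — precisely to place the matrix-element sequences $\mc M_\varphi f,\mc M_\psi g$ in $\ell^{2}(\Gamma)$ and the bracket maps $[f,\varphi],[g,\psi]$ in $L^{2}(\widehat\Gamma)$, so that the unitary Fourier transform on the compact group $\widehat\Gamma$ (resp.\ $\widehat{\mc G}/\Lambda^{\perp}$) and Parseval's identity are legitimately applicable; after that the argument is the routine conjugation bookkeeping relating $[f,\varphi]$, $[g,\psi]$ and $[\psi,g]$.
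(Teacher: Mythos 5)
Your proof is correct and follows essentially the same route as the paper's: both arguments identify $\langle f,L_\gamma\varphi\rangle$ and $\langle L_\gamma\psi,g\rangle$ with (inverse) Fourier coefficients of the brackets $[f,\varphi]$ and $[\psi,g]$ via Lemma \ref{L:matrixelement}, invoke the Bessel hypotheses to place these sequences in $\ell^2(\Gamma)$ and the brackets in $L^2(\widehat\Gamma)$, and conclude by Parseval on the compact group $\widehat\Gamma$ (respectively $\widehat{\mc G}/\Lambda^\perp$ via Lemma \ref{L:zak-fib} for the abelian case). The only cosmetic difference is that you phrase the left-hand side as $\langle \mc M_\varphi f,\mc M_\psi g\rangle_{\ell^2(\Gamma)}$ and transform forward, while the paper writes the pairings as inverse transforms $[f,\varphi]^\vee$, $[\psi,g]^\vee$; the conjugation bookkeeping $\overline{[g,\psi]}=[\psi,g]$ is handled correctly in both.
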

\begin{proof}
For all $f \in L^2  (\G)$ and 	from (\ref{eq:bracket})
	\begin{align*}
	\left \langle f, L_{\gamma}\varphi\right\rangle = &	\left \langle \mc Zf, \mc Z L_{\gamma}\varphi\right\rangle= \int_{\widehat \Gamma}\langle \mc Zf(\alpha),\mc  Z\varphi(\alpha)\rangle \alpha(\gamma)\ \dalpha
	=  \int_{\widehat \Gamma} [f, \varphi](\alpha) \alpha(\gamma) \  \dalpha.
	\end{align*}
 Since the $\Gamma$-TG system $\mc  E^{\Gamma}(\varphi)$ is Bessel, we have $[f, \varphi](\cdot)\in L^2 (\widehat \Gamma)$ from Lemma \ref{L:matrixelement}, and hence using  the inverse  Fourier transform $[f, \varphi]^{\vee} (\gamma)$  at $\gamma \in \Gamma,$ the above expression can be written as follows: 
	$$
	\left \langle f, L_{\gamma}\varphi\right\rangle =  [f,\varphi]^{\vee} (\gamma). 
	$$
	Similarly,  we have $\left \langle  L_{\gamma}\psi, g\right\rangle = [\psi,  g]^{\vee} (\gamma)$ for $\gamma \in \Gamma$ and $g \in L^2  (\G).$ Further, note that the sequences $\{[f,\varphi]^{\vee} (\gamma)\}_{\gamma \in \Gamma}$ and $\{[\psi,  g]^{\vee} (\gamma)\}_{\gamma \in \Gamma}$ are members of $\ell^2 (\Gamma)$ follows from Lemma \ref{L:matrixelement}. Hence the result follows  by observing the Parseval formula on $\ell^2 (\Gamma)$ in  the following calculation
	 	\begin{align*} 
	\sum_{\gamma\in \Gamma}\left \langle f, L_{\gamma}\varphi\right \rangle\langle L_{\gamma}\psi,g\rangle & = \sum_{\gamma \in \Gamma} \left([f,\varphi]^\vee(\gamma) \right)  \left([\psi, g]^\vee(\gamma)\right)
	 = \left\langle [f,\varphi]^\vee,[g,  \psi]^\vee\right\rangle_{\ell^2(\Gamma)}\\\nonumber
	 &=\left\langle [f, \varphi](\cdot),[g,  \psi](\cdot)\right\rangle_{L^2(\widehat\Gamma)}
	  =\int_{ \widehat \Gamma }[f,\varphi](\alpha)  [\psi,g](\alpha)\ \dalpha.
	\end{align*}

  The moreover part  follows from  the same argument as above by substituting the Zak transform $\mc Z$ for the pair $(\msc G, \Gamma)$ with the fiberization $\mathscr T$ for the pair $(\mc G, \Lambda)$ by making use of the  Lemma \ref{L:zak-fib}.
\end{proof}

The  next result connects   analysis and synthesis operators with the pre-Gramian operator in terms of the Zak transform for the pair $(\G, \Gamma).$  We recall      $\Gamma$-TG  system $\mc E^{\Gamma} (\mc A)=\{L_{\gamma}\varphi\}_{\gamma \in  \Gamma, \varphi \in \mc A} $ and its associated  $\Gamma$-TI space $\mc S^{\Gamma} (\mc A)=\ol{\Span}  \mc E^{\Gamma} (\mc A)$   from (\ref{TIsystem}) for a countable collection $\mc A=\{\varphi_t:t\in \mc N\}$  in $L^2(\G).$  If $\mc E^{\Gamma}(\mc A)$ is  a  Bessel family in $L^2(\G),$  then   from  (\ref{AnalysisOp}) the  associated analysis operator     	$T_{\mc E^{\Gamma}(\mc A)}:L^2(\G) \ra \ell^2(\Gamma\times \mc N)$ is defined by $ f\mapsto \left\{\langle f, L_{\gamma}\varphi_t\rangle\right\}_{\gamma \in \Gamma, t\in \mc N}$  and from (\ref{SynthesisOp})  the associated synthesis operator is 
$T_{\mc E^{\Gamma}(\mc A)}^*:\ell^2(\Gamma\times \mc N)\ra L^2(\G)$ defined by $\{h_t(\gamma)\}_{t\in \mc N, \gamma \in \Gamma}\mapsto \sum_{t\in \mc N}\sum_{ \gamma\in\Gamma} h_t(\gamma)L_{\gamma}\varphi_t.$ Since  $\mc E^\Gamma(\mc A)$ is Bessel in $L^2(\G),$  the system $\{\mc Z\varphi_t (\alpha)=\{\mc Z\varphi_t(\alpha,\Gamma x)\}_{\Gamma x\in \Gamma\backslash \mathscr G }\}_{t \in \mc N}$ is also Bessel in $L^2(\Gamma\backslash \G)$  for a.e.  $\alpha \in \widehat{\Gamma}$  \cite{iverson2015subspaces},  and hence the associated  \textit{pre-Gramian operator } $\mf J_{\mc A}(\alpha)$ corresponding to the Bessel system $\mc E^{\Gamma}(\mc A)$  is  defined by $\mf J_{\mc A}(\alpha):\ell^2(\mc N)\ra L^2(\Gamma\backslash \mathscr G),$  $\eta=\{\eta_t\}_{t\in \mc N}\mapsto \left\{\sum_{t\in \mc N}\eta_t\mc Z\varphi_t(\alpha, \Gamma x)\right\}_{\Gamma x\in \Gamma\backslash \mathscr G },$
which  is a well defined  bounded linear operator due  the Bessel system $\mc A (\alpha).$  
Further, we define its adjoint operator  
$\mf J_{\mc A}(\alpha)^*: L^2(\Gamma\backslash \mathscr G)\ra\ell^2(\mc N)$ by $\nu\mapsto \left\{\left\langle \nu,  \mc Z\varphi_t(\alpha)\right\rangle\right\}_{t\in \mc N}.$  
The \textit{Gramian operator}     $G_\mc A (\alpha)=\mf J_{\mc A}(\alpha)^*\mf J_{\mc A}(\alpha)$ from $\ell^2(\mc N)$ to $\ell^2(\mc N)$  is also bounded linear operator for a.e.   $\alpha \in \widehat \Gamma.$  For two Bessel systems $\mc E^{\Gamma}(\mc A)$ and $\mc E^{\Gamma}(\mc A'),$   the associated \textit{mixed dual-Gramian operator}  $\tilde G_{\mc A,\mc A'}(\alpha)=\mf J_{\mc A}(\alpha)\mf J_{\mc A'}(\alpha)^*:L^2(\Gamma\backslash \G)\ra L^2(\Gamma\backslash \G)$  is defined by  
$\langle \tilde G_{\mc A,\mc A'}(\alpha)v_1,v_2\rangle  =\sum_{ t\in \mc N}\langle v_1, \mc Z\psi_t(\alpha)\rangle \ol{\langle v_2 , \mc Z\varphi_t(\alpha) \rangle}$
for  a.e. $\alpha \in \widehat \Gamma,$ where $v_1, v_2 \in L^2(\Gamma\backslash \G)$ and $\mc A'=\{\psi_t:t\in \mc N\}\subset L^2(\G).$ This terminology and the following proposition can be deduced using the fiberization $\mathscr T$ map for the abelian pair $(\mc G, \Lambda).$ 
	 \begin{prop}\label{P:zakpre-gramian}
	Let $\mc A=\{\varphi_t\}_{t\in \mc N}$  and $\mc A'=\{\psi_t\}_{t\in \mc N}$ be two countable collections of functions in $L^2 (\mathscr G)$ such that  $\mc  E^{\Gamma}(\mc A)$  and $\mc  E^{\Gamma}(\mc A')$ are   Bessel. Then the following are true: 
	\begin{itemize}
		\item[(i)] For each $t\in \mc N$ and $f \in L^2(\G),$ the Fourier transform of   $(T_{\mc E^{\Gamma}(\mc A)}f)_t$  is given by 
			$$\widehat{(T_{\mc E^{\Gamma}(\mc A)}f)_t}(\alpha) = [ f,  \varphi_t] (\alpha), \ \mbox{and} \  \left\{\widehat{(T_{\mc E^{\Gamma}(\mc A)}f})_{t}
			(\alpha)\right\}_{t\in \mathcal N} = \mf J_{\mc A}(\alpha)^*\{(\mc Zf)(\alpha,\Gamma x)\}_{\Gamma x \in\Gamma \backslash \mathscr G}$$ 
for a.e. $\alpha \in \widehat \Gamma,$ 	where $(T_{\mc E^{\Gamma}(\mc A)}f)_t=\left\{\langle f, L_{\gamma}\varphi_t\rangle\right\}_{\gamma \in \Gamma}.$

		\item[(ii)] 	For $h=\{h_{t}(\gamma)\}_{t\in \mc N, \gamma\in \Gamma}\in \ell^2(\Gamma\times \mc N),$
		the Zak transform of  $(T_{\mc E^{\Gamma}(\mc A)}^*h)$  at $(\alpha, \Gamma x) \in \widehat \Gamma \times \Gamma \backslash \G$  is 
		$ 		\left[\mc Z (T_{\mc E^{\Gamma}(\mc A)}^*h)\right](\alpha, \Gamma x)= \sum_{t\in \mc N}   \widehat{h}_t (\alpha) \,  \mc Z{\varphi_t}(\alpha,\Gamma x).$ 
 
	Moreover,   $\left\{\left[\mc Z (T_{\mc E^{\Gamma}(\mc A)}^*h)\right](\alpha, \Gamma x)\right\}_{\Gamma x\in \Gamma\backslash \G}=  \mf J_{\mc A}(\alpha) \left\{\widehat{{ h_t}}(\alpha)\right\}_{t\in \mathcal N}\ \mbox{for a.e.} \  \alpha\in\widehat \Gamma.$
		\item[(iii)]  For $f \in L^2(\G)$ and  a.e. $\alpha\in\widehat \Gamma,$
		$	\left\{\mc Z \left(T_{\mc E^{\Gamma}(\mc A)}^*T_{\mc E^{\Gamma}(\mc A')}f \right) (\alpha, \Gamma x)\right\}_{\Gamma x\in\Gamma \backslash \mathscr G} 	=\mf J_{\mc A}(\alpha)\mf J_{\mc A'}(\alpha)^*\left\{\mc Zf(\alpha, \Gamma x)\right\}_{\Gamma x \in\Gamma \backslash \mathscr G}= \tilde G_{\mc A, \mc A'} (\alpha) 	\left\{(\mc Zf)(\alpha,\Gamma x)\right\}_{\Gamma x \in\Gamma \backslash \mathscr G}.$
	\end{itemize}
\end{prop}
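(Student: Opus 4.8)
The plan is to derive all three identities by unwinding the definitions of the analysis and synthesis operators in (\ref{AnalysisOp})--(\ref{SynthesisOp}) and converting sums over $\Gamma$ into the corresponding objects on $\widehat\Gamma$ by means of Lemma \ref{L:matrixelement} and the intertwining relation $\mc Z(L_\gamma f)(\alpha,\Gamma x)=\alpha(\gamma^{-1})\,\mc Zf(\alpha,\Gamma x)$ established in its proof. For part (i), I would first note that for a fixed $t\in\mc N$ the $t$-th block $(T_{\mc E^{\Gamma}(\mc A)}f)_t=\{\langle f,L_\gamma\varphi_t\rangle\}_{\gamma\in\Gamma}$ is precisely the matrix element $\mc M_{\varphi_t}f$ from (\ref{Matrixelement}). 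Since $\mc E^{\Gamma}(\mc A)$ is Bessel, each single-generator system $\mc E^{\Gamma}(\varphi_t)$ is Bessel with the same bound, so the moreover part of Lemma \ref{L:matrixelement} applies and yields $\mc M_{\varphi_t}f\in\ell^2(\Gamma)$, $[f,\varphi_t](\cdot)\in L^2(\widehat\Gamma)$, and $\widehat{(\mc M_{\varphi_t}f)}(\alpha)=[f,\varphi_t](\alpha)$, which is the first identity. For the second, expanding the bracket map via (\ref{eq:bracket}) gives $[f,\varphi_t](\alpha)=\langle\{\mc Zf(\alpha,\Gamma x)\}_{\Gamma x},\mc Z\varphi_t(\alpha)\rangle_{L^2(\Gamma\backslash\mathscr G)}$, which is exactly the $t$-th coordinate of $\mf J_{\mc A}(\alpha)^*\{\mc Zf(\alpha,\Gamma x)\}_{\Gamma x}$ by the definition of the adjoint pre-Gramian; collecting over $t$ gives the vector form for a.e. $\alpha$.

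For part (ii), by definition $T_{\mc E^{\Gamma}(\mc A)}^*h=\sum_{t\in\mc N}\sum_{\gamma\in\Gamma}h_t(\gamma)L_\gamma\varphi_t$, the series converging in $L^2(\mathscr G)$ because $\mc E^{\Gamma}(\mc A)$ is Bessel. Applying the unitary (hence bounded) Zak transform and using the intertwining relation termwise gives $\mc Z(T_{\mc E^{\Gamma}(\mc A)}^*h)(\alpha,\Gamma x)=\sum_{t\in\mc N}\bigl(\sum_{\gamma\in\Gamma}h_t(\gamma)\alpha(\gamma^{-1})\bigr)\mc Z\varphi_t(\alpha,\Gamma x)=\sum_{t\in\mc N}\widehat{h_t}(\alpha)\,\mc Z\varphi_t(\alpha,\Gamma x)$, where for each $t$ the inner sum is the discrete-time Fourier transform of $\{h_t(\gamma)\}_{\gamma\in\Gamma}\in\ell^2(\Gamma)$ and converges in $L^2(\widehat\Gamma)$. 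By the definition of $\mf J_{\mc A}(\alpha)$ the right-hand side equals $\mf J_{\mc A}(\alpha)\{\widehat{h_t}(\alpha)\}_{t\in\mc N}$ for a.e. $\alpha$, which is the moreover claim.

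Part (iii) then follows by composing (i) and (ii): taking $h=T_{\mc E^{\Gamma}(\mc A')}f$, which lies in $\ell^2(\Gamma\times\mc N)$ by the Bessel property of $\mc E^{\Gamma}(\mc A')$, part (ii) gives $\mc Z(T_{\mc E^{\Gamma}(\mc A)}^*T_{\mc E^{\Gamma}(\mc A')}f)(\alpha,\Gamma x)=\sum_{t\in\mc N}\widehat{(T_{\mc E^{\Gamma}(\mc A')}f)_t}(\alpha)\,\mc Z\varphi_t(\alpha,\Gamma x)$; by part (i) applied to $\mc A'$ one has $\widehat{(T_{\mc E^{\Gamma}(\mc A')}f)_t}(\alpha)=[f,\psi_t](\alpha)$ and $\{[f,\psi_t](\alpha)\}_{t\in\mc N}=\mf J_{\mc A'}(\alpha)^*\{\mc Zf(\alpha,\Gamma x)\}_{\Gamma x}$. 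Hence the collection in question is $\mf J_{\mc A}(\alpha)\mf J_{\mc A'}(\alpha)^*\{\mc Zf(\alpha,\Gamma x)\}_{\Gamma x}$, and since $\tilde G_{\mc A,\mc A'}(\alpha)=\mf J_{\mc A}(\alpha)\mf J_{\mc A'}(\alpha)^*$ by the definition of the mixed dual-Gramian, this is $\tilde G_{\mc A,\mc A'}(\alpha)\{\mc Zf(\alpha,\Gamma x)\}_{\Gamma x}$. The corresponding statements for the abelian pair $(\mc G,\Lambda)$ follow along identical lines after replacing the Zak transform $\mc Z$ by the fiberization map $\mathscr T$ and invoking Lemma \ref{L:zak-fib} together with (\ref{zak-fib}) in place of Lemma \ref{L:matrixelement}.

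I expect the only point needing real care to be in part (ii): justifying that the bounded operator $\mc Z$ may be pushed inside the double series and that afterwards the $\gamma$-sum is identified with $\widehat{h_t}$ as an element of $L^2(\widehat\Gamma)$, as well as checking that $\{\widehat{h_t}(\alpha)\}_{t\in\mc N}\in\ell^2(\mc N)$ for a.e. $\alpha$ so that $\mf J_{\mc A}(\alpha)$ is applicable. All of these follow from $L^2$-convergence together with the Bessel hypotheses, but to make the pointwise-in-$(\alpha,\Gamma x)$ formulas literally valid one should pass to a.e.-convergent subsequences and keep the ``a.e. $\alpha$'' qualifiers consistently throughout parts (i) and (iii) as well; everything else is bookkeeping with the definitions of $\mf J_{\mc A}(\alpha)$, $\mf J_{\mc A}(\alpha)^*$, and $\tilde G_{\mc A,\mc A'}(\alpha)$.
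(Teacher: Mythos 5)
Your proposal is correct and follows essentially the same route as the paper: identify $(T_{\mc E^{\Gamma}(\mc A)}f)_t$ with the matrix element $\mc M_{\varphi_t}f$ and invoke Lemma \ref{L:matrixelement} for (i), push the Zak transform through the synthesis sum via the intertwining relation for (ii), and compose the two for (iii). Your added attention to the $L^2$-convergence of the series and the a.e. qualifiers is sound but only makes explicit what the paper leaves implicit.
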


\begin{proof} (i) The Fourier transform of   $(T_{\mc E^{\Gamma}(\mc A)}f)_t$ at $\alpha \in \widehat {\Gamma},$  i.e.,    
	$	\widehat{(T_{\mc E^{\Gamma}(\mc A)}f)_t}(\alpha) = [ f,\varphi_t] (\alpha),$  follows by  Lemma \ref{L:matrixelement} and    $(T_{\mc E^{\Gamma}(\mc A)}f)_t=\left\{\langle f, L_{\gamma}\varphi_t\rangle\right\}_{\gamma \in \Gamma}=\left\{(\mc M_{\varphi_t}f)(\gamma)\right\}_{\gamma \in \Gamma}$   for each $t\in \mc N$ and $f \in L^2(\G).$  Further
	  \begin{align*}
  \left\{\widehat{(T_{\mc E^{\Gamma}(\mc A)}f})_{t}
  (\alpha)\right\}_{t\in \mathcal N}
  =\{[ f,\varphi_t] (\alpha)\}_{t\in \mc N}
  &=\left\{\int_{\Gamma \backslash \mathscr G}(\mc Zf)(\alpha,\Gamma x)\overline{\mc Z\varphi_{t}(\alpha, \Gamma x)}\ \dGammax\right\}_{t\in \mc N}\\
  =\left\{\langle \mc Zf(\alpha), \mc Z\varphi_t(\alpha)\rangle\right\}_{t\in \mc N}
  &=\mf J_{\mc A}(\alpha)^*\{(\mc Zf)(\alpha,\Gamma x)\}_{\Gamma x \in\Gamma \backslash \mathscr G} \ \a.e. \ \alpha\in\widehat \Gamma.
  \end{align*}

\noindent (ii) Let $h=\{h_{t}(\gamma)\}_{t\in \mc N, \gamma\in \Gamma}\in \ell^2(\Gamma\times \mc N).$ Employing  
the Zak transform on the synthesis operator   $(T_{\mc E^{\Gamma}(\mc A)}^*h)$  at $(\alpha, \Gamma x) \in \widehat \Gamma \times \Gamma \backslash \G$ and the discrete-Fourier transform on the sequence $\{h_{t}(\gamma)\}_{\gamma\in \Gamma}$ at $\alpha \in \widehat{\Gamma},$ we obtain 
{\small		\begin{align*}
			\mc Z (T_{\mc E^{\Gamma}(\mc A)}^*h)(\alpha, \Gamma x)
	& =\mc Z\left( \sum_{t\in \mc N}\sum_{\gamma \in  \Gamma}h_t(\gamma)L_\gamma\varphi_t\right)(\alpha, \Gamma x)
	=\left( \sum_{t\in \mc N}\sum_{\gamma \in  \Gamma}h_t(\gamma)\mc Z (L_\gamma\varphi_t) \right)(\alpha, \Gamma x)\\
		&= \sum_{t\in \mc N} \left(\sum_{\gamma \in  \Gamma}h_t(\gamma)\ol{\alpha(\gamma)}\right) \mc Z\varphi_t(\alpha, \Gamma x)
		= \sum_{t\in \mc N}\widehat{ h}_t(\alpha) \mc Z{\varphi_t}(\alpha, \Gamma x).
	\end{align*}}
 Then   in terms of pre-Gramian operator for a.e. $\alpha \in \widehat{\Gamma},$ we get 
	$$
	\left\{\mc Z(T_{\mc E^{\Gamma}(\mc A)}^*h)(\alpha, \Gamma x)\right\}_{\Gamma x\in \Gamma \backslash \mathscr G}=\left\{\sum_{t\in \mc N}\widehat{ h}_t(\alpha) \mc Z{\varphi_t}(\alpha, \Gamma x)\right\} _{\Gamma x\in \Gamma\backslash \G}= \mf J_{\mc A}(\alpha)\left\{\widehat{ h}_t (\alpha)\right\}_{t\in \mathcal N}.$$
	\noindent (iii) From the  (i) and (ii) parts associated with $\mc A$ and $\mc A'$ and for a.e. $\alpha \in \widehat \Gamma,$ we get the following by combining both the analysis and synthesis operators  for $f \in L^2 (\G),$
	\begin{align*}
		\left\{\mc Z (T_{\mc E^{\Gamma}(\mc A)}^*T_{\mc E^{\Gamma}(\mc A')}f)(\alpha, \Gamma x)\right\}_{\Gamma x\in\Gamma \backslash \mathscr G}
	 	&=\mf J_{\mc A}(\alpha)\left\{\widehat{{ (T_{\mc E^{\Gamma}(\mc A')}f)_t}}(\alpha)\right\}_{t\in \mathcal N}
		=\mf J_{\mc A}(\alpha)\mf J_{\mc A'}(\alpha)^*\left\{\mc Zf(\alpha, \Gamma x)\right\}_{\Gamma x \in\Gamma \backslash \mathscr G}.
	\end{align*}	
\end{proof}
Now we state main results of this section to characterize subspace orthogonal and dual to a Bessel family having multi-generators.  Theorem \ref{T:dualityMultiGenerators} is a successor of the results of \cite{christensen2005generalized,weber2004orthogonal}  studied for $L^2(\mathbb R^n).$  Theorem \ref{T:Orthogonal}  characterizes orthogonal frames $\mc E^{\Gamma}(\mc A)$ and $\mc E^{\Gamma}(\mc A')$ in terms of pre-Gramian and mixed-dual Gramian operators   for  locally compact groups by action of its  abelian subgroup. The result has so many predecessors   by action of  integer translates in $L^2 (\mathbb R^n)$ and uniform lattices in $L^2 (\mc G)$   \cite{gumber2018orthogonality, gumber2019pairwisebuletin, kim2007pair,  weber2004orthogonal}, where $\mc G$ is an LCA group.	\begin{thm}\label{T:dualityMultiGenerators}  For a   $\sigma$-finite measure space $\mc N$ having counting measure, let  us consider two sequences of functions $\mc A=\{\varphi_t\}_{t\in \mc N}$ and  $\mc A'=\{\psi_t\}_{t\in \mc N}$  in $L^2 (\G)$ be such that the $\Gamma$-TG  systems
		$\mc E^{\Gamma}(\mc A)$ and $\mc E^{\Gamma}(\mc A')$
		are    Bessel. Then the following hold:
		\begin{enumerate}
			\item[(i)]  $\mc E^{\Gamma}(\mc A')$ is an $\mc S^{\Gamma}(\mc A)$-subspace dual  to $\mc E^{\Gamma}(\mc A)$ if and only if  for all $t'\in \mc N,$  we have 
			$$
			(\mc Z\varphi_{t'})(\alpha, \Gamma x)=\sum_{t\in \mc N}  [\varphi_{t'}, {\psi_t}](\alpha)  \,  (\mc Z\varphi_t)(\alpha, \Gamma x)  \ \mbox{for} \ \ {a.e.}\ \alpha \in \widehat \Gamma,\  \Gamma x \in \Gamma \backslash \G.
			$$
		Equivalently	  for   $t'\in \mc N,$ $\{(\mc Z\varphi_{t'})(\alpha, \Gamma x)\}_{\Gamma x \in \Gamma \backslash \G}=\mf J_{\mc A}(\alpha)\left\{[\varphi_{t'}, {\psi_t}](\alpha)\right\}_{t\in \mathcal N}$ for a.e. $\alpha \in \widehat \Gamma.$
				\item [(ii)] $\mc E^{\Gamma}(\mc A')$ is an $\mc S^{\Gamma}(\mc A)$-subspace orthogonal  to $\mc E^{\Gamma}(\mc A)$ if and only if   	for   $ f \in \mc S^\Gamma(\mc A)$ and  $\ g\in L^2(\G),$   
			$$
		\sum_{t\in \mc N}[f,\psi_t](\alpha)[\varphi_t, g](\alpha)=0=	\left\langle \tilde G_{\mc A,\mc A'}(\alpha) (\mc Zf (\alpha)), \mc Zg (\alpha) \right\rangle \ a.e. \  \alpha \in \widehat \Gamma.
			$$
			\end{enumerate}
		Moreover, when the pair $(\G, \Gamma)$ is an abelian pair $(\mc G, \Lambda),$ then (i) and (ii) become (i') and (ii') as follows:
			\begin{enumerate}
			\item[(i')]  $\mc E^{\Lambda}(\mc A')$ is an $\mc S^{\Lambda}(\mc A)$-subspace dual  to $\mc E^{\Lambda}(\mc A)$ if and only if  for all $t'\in \mc N,$\\    $	(\mathscr T \varphi_{t'})(\omega \Lambda^\perp)(\xi)=\sum_{t\in \mc N}  [ \varphi_{t'},   \psi_{t}]_{\mathscr T}(\omega \Lambda^\perp)  \,  \mathscr T \varphi_{t} (\omega \Lambda^\perp)(\xi)$   for a.e. $\omega \Lambda^\perp \in \widehat{\mc G}/\Lambda^\perp$ and $\xi  \in \Lambda^\perp.$
			
			\item [(ii')] $\mc E^{\Lambda}(\mc A')$ is an $\mc S^{\Lambda}(\mc A)$-subspace orthogonal  to $\mc E^{\Lambda}(\mc A)$ if and only if   	for all  $ f \in \mc S^{\Lambda}(\mc A)$  and $\ g\in L^2(\mc G)$ $
			\sum_{t\in \mc N}[ f, \psi_t]_{\mathscr T}(\omega \Lambda^\perp)[\varphi_t, g]_{\mathscr T}(\omega \Lambda^\perp)=0$  for a.e.   $\omega \Lambda^\perp \in \widehat{\mc G}/\Lambda^\perp.$
		\end{enumerate}
	\end{thm}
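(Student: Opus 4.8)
The plan is to read the theorem off the fibrewise description of the mixed dual Gramian $P:=T^*_{\mc E^{\Gamma}(\mc A)}T_{\mc E^{\Gamma}(\mc A')}$ supplied by Proposition \ref{P:zakpre-gramian}(iii) together with the transfer identity of Lemma \ref{L:BesselCharecterization}. First I would unwind the definitions in the discrete situation ($\mathscr M=\Gamma\times\mc N$ with counting measure): $\mc E^{\Gamma}(\mc A')$ is an $\mc S^{\Gamma}(\mc A)$-subspace dual (resp.\ $\mc S^{\Gamma}(\mc A)$-subspace orthogonal) to $\mc E^{\Gamma}(\mc A)$ if and only if $Pf=f$ (resp.\ $Pf=0$) for every $f\in\mc S^{\Gamma}(\mc A)$, with $P$ bounded since both systems are Bessel.

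For part (i), the first step is to observe that $P$ commutes with the left translations $L_{\gamma'}$, $\gamma'\in\Gamma$: a direct computation with the formulas for the analysis and synthesis operators shows $T_{\mc E^{\Gamma}(\mc A')}$ turns $L_{\gamma'}$ into the coordinate shift on $\ell^{2}(\Gamma\times\mc N)$ and $T^*_{\mc E^{\Gamma}(\mc A)}$ turns that shift back into $L_{\gamma'}$. Since the vectors $L_{\gamma'}\varphi_{t'}$ span a dense subspace of $\mc S^{\Gamma}(\mc A)$ and each $L_{\gamma'}$ is injective, $Pf=f$ on $\mc S^{\Gamma}(\mc A)$ is equivalent to $P\varphi_{t'}=\varphi_{t'}$ for all $t'\in\mc N$. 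Applying the unitary Zak transform to this and invoking Proposition \ref{P:zakpre-gramian}(iii), the fibre of $\mc Z(P\varphi_{t'})$ at $\alpha$ equals $\mf J_{\mc A}(\alpha)\mf J_{\mc A'}(\alpha)^{*}\{\mc Z\varphi_{t'}(\alpha,\Gamma x)\}_{\Gamma x}$, and since $\mf J_{\mc A'}(\alpha)^{*}\{\mc Z\varphi_{t'}(\alpha,\Gamma x)\}_{\Gamma x}=\{\langle\mc Z\varphi_{t'}(\alpha),\mc Z\psi_t(\alpha)\rangle\}_{t}=\{[\varphi_{t'},\psi_t](\alpha)\}_{t}\in\ell^{2}(\mc N)$ (the membership following from the Bessel property of $\mc E^{\Gamma}(\mc A')$), comparing fibres a.e.\ in $\alpha$ after the identification $L^{2}(\widehat\Gamma\times\Gamma\backslash\mathscr G)\cong L^{2}(\widehat\Gamma;L^{2}(\Gamma\backslash\mathscr G))$ turns $P\varphi_{t'}=\varphi_{t'}$ into precisely the displayed equation $\mc Z\varphi_{t'}(\alpha,\Gamma x)=\sum_{t\in\mc N}[\varphi_{t'},\psi_t](\alpha)\,\mc Z\varphi_t(\alpha,\Gamma x)$. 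The ``equivalently'' line is just a restatement of this via the defining formula of $\mf J_{\mc A}(\alpha)$, and every step is reversible, yielding the converse.

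For part (ii), rather than reducing to generators I would compute directly: for arbitrary $f,g\in L^{2}(\mathscr G)$, $\langle Pf,g\rangle=\sum_{t\in\mc N}\sum_{\gamma\in\Gamma}\langle f,L_\gamma\psi_t\rangle\langle L_\gamma\varphi_t,g\rangle=\int_{\widehat\Gamma}\bigl(\sum_{t\in\mc N}[f,\psi_t](\alpha)[\varphi_t,g](\alpha)\bigr)\,d\mu_{\widehat\Gamma}(\alpha)=\int_{\widehat\Gamma}\langle\tilde G_{\mc A,\mc A'}(\alpha)\mc Zf(\alpha),\mc Zg(\alpha)\rangle\,d\mu_{\widehat\Gamma}(\alpha)$, where the inner identity for each $t$ is Lemma \ref{L:BesselCharecterization}, the interchange of $\sum_t$ with the $\Gamma$-sum/$\widehat\Gamma$-integral is legitimate because the two Bessel bounds force absolute convergence, and the last equality is the definition of $\tilde G_{\mc A,\mc A'}(\alpha)$ combined with Proposition \ref{P:zakpre-gramian}(iii). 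Now if $Pf=0$ for $f\in\mc S^{\Gamma}(\mc A)$, applying $\mc Z$ and Proposition \ref{P:zakpre-gramian}(iii) gives $\tilde G_{\mc A,\mc A'}(\alpha)\mc Zf(\alpha)=0$ for a.e.\ $\alpha$, whence $\sum_{t}[f,\psi_t](\alpha)[\varphi_t,g](\alpha)=\langle\tilde G_{\mc A,\mc A'}(\alpha)\mc Zf(\alpha),\mc Zg(\alpha)\rangle=0$ a.e.\ $\alpha$ for each $g$; conversely, if the latter holds for every $g\in L^{2}(\mathscr G)$ then $\langle Pf,g\rangle=0$ for all $g$, so $Pf=0$. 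Finally, the abelian ``moreover'' statement is proved by running the same two arguments verbatim with $\mathscr T$, $\widehat{\mc G}/\Lambda^\perp$, $\Lambda^\perp$, $[\cdot,\cdot]_{\mathscr T}$ replacing $\mc Z$, $\widehat\Gamma$, $\Gamma\backslash\mathscr G$, $[\cdot,\cdot]$, using the fiberization versions of Lemma \ref{L:BesselCharecterization} and Proposition \ref{P:zakpre-gramian} together with $[\psi,\varphi](\omega|_\Lambda)=[\psi,\varphi]_{\mathscr T}(\omega\Lambda^\perp)$ from Lemma \ref{L:zak-fib}. I expect the only delicate points to be bookkeeping ones — the passage between operator identities in $L^{2}(\mathscr G)$ and a.e.-in-$\alpha$ fibrewise identities (relying on unitarity of $\mc Z$ and on Fubini over $\widehat\Gamma\times\Gamma\backslash\mathscr G$), and the Fubini/Tonelli interchange in the display for (ii) — rather than anything structural, since the substance has already been carried by Lemma \ref{L:BesselCharecterization} and Proposition \ref{P:zakpre-gramian}.
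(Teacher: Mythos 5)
Your argument is correct, and the overall skeleton (reduce everything to Lemma \ref{L:BesselCharecterization} and Proposition \ref{P:zakpre-gramian}) matches the paper's. Part (i) is essentially the paper's proof: the paper also reduces to the generators by plugging $f=L_\eta\varphi_{t'}$ into the reproducing formula and cancelling the modulation factor $\alpha(\eta^{-1})$ that $\mc Z$ produces on both sides, which is the same content as your commutation relation $PL_{\gamma'}=L_{\gamma'}P$; the paper then extends from the span to $\mc S^{\Gamma}(\mc A)$ by continuity exactly as you do, and separately verifies $\{[\varphi_{t'},\psi_t](\alpha)\}_t\in\ell^2(\mc N)$ by the Bessel estimate you cite. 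Where you genuinely diverge is part (ii): to upgrade the integrated identity $\int_{\widehat\Gamma}\sum_t[f,\psi_t](\alpha)[\varphi_t,g](\alpha)\,d\mu_{\widehat\Gamma}(\alpha)=0$ to the a.e.\ pointwise statement, the paper runs a measure-theoretic contradiction argument — splitting a putative bad set into four pieces according to the sign of the real and imaginary parts of $h(\alpha)$ and constructing a special $f$ with $\mc Zf(\alpha)=P(\alpha)e_{i_0}$ on one piece — whereas you obtain the fibrewise vanishing directly from the strong form $T^*_{\mc E^{\Gamma}(\mc A)}T_{\mc E^{\Gamma}(\mc A')}f=0$ via unitarity of $\mc Z$ and the identity $\mc Z(Pf)(\alpha)=\tilde G_{\mc A,\mc A'}(\alpha)\mc Zf(\alpha)$ of Proposition \ref{P:zakpre-gramian}(iii). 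Your route is shorter and avoids the auxiliary construction (it is in fact the same device the paper uses later in the proof of Theorem \ref{T:Orthogonal}); the paper's localization argument has the mild advantage of only using the weak bilinear form of orthogonality, but since the definition in the paper is the operator identity $T^*_{\mc X}T_{\mc Y}|_{\mc K}=0$, nothing is lost. The Fubini/absolute-convergence points you flag are handled exactly as you expect, via the double Bessel bound.
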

	\begin{proof} (i) Let $\mc E^{\Gamma}(\mc A')$ be an $\mc S^{\Gamma}(\mc A)$-subspace dual  to $\mc E^{\Gamma}(\mc A).$ Then for $f \in \mc S^{\Gamma}(\mc A),$ we can write  $f=\sum_{t\in \mc N}\sum_{\gamma\in \Gamma} \langle f, L_\gamma \psi_t\rangle L_\gamma\varphi_t.$ By choosing   $f=L_\eta\varphi_{t'}$ for $\eta \in \Gamma$ and $t' \in \mc N,$ and applying the Zak transformation $\mc Z$ on the  both sides, we have 
		$\mc Z (L_\eta\varphi_{t'}) (\alpha, \Gamma x)= \mc Z\left(\sum_{t\in \mc N} \sum_{\gamma\in \Gamma} \langle L_\eta\varphi_{t'}, L_\gamma \psi_t\rangle L_\gamma\varphi_t \right) (\alpha, \Gamma x)$ for a.e.  $\alpha \in \widehat \Gamma$ and $\Gamma x \in \Gamma \backslash \G.$  Thus we get the result   by noting  
     $\mc Z (L_\eta\varphi_{t'}) (\alpha, \Gamma x) = \alpha(\eta^{-1}) \mc Z \varphi_{t'} (\alpha, \Gamma x),$  and   Proposition  \ref{P:zakpre-gramian} (iii),
     \begin{align*}
     	\mc Z(T_{\mc E^\Gamma(\mc A)}^*T_{{\mc E^\Gamma}(A')}L_\eta \varphi_{t'})(\alpha, \Gamma x)
     	&=\mf J_{\mc A}(\alpha)\mf J_{\mc A'}(\alpha)^*\left\{\mc ZL_\eta\varphi_{t'}(\alpha, \Gamma x)\right\}_{\Gamma x \in\Gamma \backslash \mathscr G}\\
     	&=\sum_{t\in \mc N}  [\mc Z (L_\eta\varphi_{t'}),  \psi_t] (\alpha)\mc Z \varphi_{t} (\alpha, \Gamma x)\\
		 &	= \alpha(\eta^{-1}) \sum_{t\in \mc N}[\varphi_{t'},  \psi_t](\alpha) \mc Z \varphi_t (\alpha, \Gamma x).
	\end{align*}
Conversely, assume     $(\mc Z\varphi_{t'})(\alpha, \Gamma x)=\sum_{t\in \mc N}  [\varphi_{t'}, {\psi_t}](\alpha)  \,  (\mc Z\varphi_t)(\alpha, \Gamma x)$    a.e. $\alpha \in \widehat \Gamma,$   $\Gamma x \in \Gamma \backslash \G$ and     $t'\in \mc N.$  Then we have  $L_\eta\varphi_{t'} = \sum_{t\in \mc N}\sum_{\gamma\in \Gamma}\langle L_\eta\varphi_{t'}, L_\gamma \psi_t\rangle L_\gamma\varphi_t$ in view of the above calculations for $\eta \in \Gamma$ and $t' \in \mc N.$ Therefore for $f \in \mbox{span} \ \mc E^{\Gamma}(\mc A),$  we can write  
	$f =\sum_{t\in \mc N} \sum_{\gamma\in \Gamma} \langle f, L_\gamma \psi_t\rangle L_\gamma\varphi_t$ which is also valid for all $f \in \mc S^\Gamma (\mc A)$ since  the function $f\mapsto \sum_{t\in \mc N} \sum_{\gamma\in \Gamma} \langle f, L_\gamma \psi_t\rangle L_\gamma\varphi_t$ from $\mc S^\Gamma (\mc A)$ to  $L^2 (\G)$ is continuous due to the Bessel systems   $\mc E^\Gamma(\mc A)$  and $\mc E^\Gamma(\mc A').$   Thus the result holds.

	For the equivalent part, $\left\{[\varphi_{t'}, {\psi_t}](\alpha)\right\}_{t\in \mathcal N}$ is a members of $\ell^2 (\mc N)$ for a.e. $\alpha \in \widehat \Gamma,$ follows from
	\begin{align*}
\sum_{t \in \mc N} 	\int_{\widehat \Gamma}  \left| [\varphi_{t'}, {\psi_t}](\alpha)\right|^2\  \dalpha &=   \sum_{t \in \mc N} 	\int_{\widehat \Gamma} \left|	\widehat{(\mc M_{\psi_{t}} \varphi_{t'})}(\alpha) \right|^2 \ \dalpha=\sum_{t \in \mc N} \sum_{\gamma \in \Gamma}   \left|(\mc M_{\psi_{t}} \varphi_{t'})(\gamma) \right|^2\\
&= \sum_{t \in \mc N} \sum_{\gamma \in \Gamma}   \left|\left \langle \varphi_{t'}, L_\gamma \psi_t \right\rangle  \right|^2 \leq B \|\varphi_{t'}\|^2
	\end{align*}
for some $B>0,$ since $\mc E^{\Gamma}(\mc A')$ is Bessel.  Using the definition of $\mf J_{\mc A}(\alpha)$ we get the result.

\noindent(ii)  	Suppose  $\mc E^{\Gamma}(\mc A')$ is an $\mc S^{\Gamma}(\mc A)$-subspace orthogonal to $\mc E^{\Gamma}(\mc A),$  then for $f\in \mc S^\Gamma(\mc A)$ and $g\in  L^2(\G),$  we have 
$\sum_{ t\in \mc N}\sum_{\gamma \in \Gamma} \langle f, L_\gamma \psi  \rangle  \langle L_\gamma \varphi, g  \rangle  = 0 $
from (\ref{orthogonal}), and hence  we can get the following easily  
\begin{align}\label{eq:ortho-discre}
\sum_{ t\in \mc N} \int_{\widehat {\Gamma}}[f, \psi_t](\alpha) [\varphi_t,g](\alpha)  \ d{\mu_{\widehat \Gamma}}(\alpha)=0, 
\end{align}
by considering countable functions in     Lemma \ref{L:BesselCharecterization} .
Therefore for a.e. $\alpha \in \widehat{\Gamma},$ we need to  prove \\ $\sum_{ t\in \mc N} [f, \psi_t](\alpha)  [\varphi_t,g](\alpha)  =0$  which is same as   $\sum_{ t\in \mc N}\langle \mc Zf(\alpha), \mc Z\psi_t(\alpha)\rangle  \langle  \mc Z\varphi_t(\alpha), \mc Z g(\alpha)\rangle =0$ for a.e. $\alpha.$ 
For this, let $(e_i)_{i\in \mathbb Z}$ be an  orthonormal basis for $L^2(\Gamma\backslash \G)$ and $P(\alpha)$ be an orthogonal projection of $L^2(\Gamma\backslash \G)$ on $\ol{\Span}\{\{\mc Z\varphi_t(\alpha, \Gamma x)\}_{\Gamma x \in \Gamma \backslash \G}:t\in \mc N\}$ for a.e. $\alpha \in \widehat{\Gamma}.$ Assume on the contrary,  there exists $i_0\in \mathbb Z$ such that 
$$h(\alpha)=\sum_{ t\in \mc N} \langle P(\alpha) e_{i_0}, \mc Z\varphi_t(\alpha)\rangle   \langle\mc Z\psi_t(\alpha) ,  \mc Zg (\alpha)\rangle \neq 0$$ on a measurable set $E\ss \widehat{\Gamma}$ with $\mu_{\widehat \Gamma}(E)>0.$ Then one of the four sets must have positive measure:
\begin{align*}
E_1=\{\alpha\in E: \mbox{Re}~h(\alpha)>0\} & \qquad \qquad  E_3=\{\alpha\in E: \mbox{Im}~ h(\alpha)>0\}\\
E_2=\{\alpha\in E: \mbox{Re}~ h(\alpha)<0\} & \qquad \qquad E_4=\{\alpha\in E: \mbox{Im}~ h(\alpha)<0\}.
\end{align*}
Suppose $\mu_{\widehat {\Gamma}}(E_1)>0,$  and choosing  $f\in \mc S^\Gamma(\mc A)$ such that   for    all $\Gamma x \in \Gamma \backslash \G,$  $\mc Zf(\alpha, \Gamma x)= P(\alpha)e_{i_{0}}$   for a.e. $\alpha \in E_1$  and zero for  other $\alpha$'s. Then   the estimate 
$$
Re\left\{\sum_{ t\in \mc N}  \left(\int_{\widehat {\Gamma}} [f, \psi_t](\alpha) [\varphi_t,g](\alpha) \ d{\mu_{\widehat \Gamma}}(\alpha)\right)\right\} >0
$$
due to $\mu_{\widehat {\Gamma}}(E_1)>0,$ which  contradicts the fact that  the integration is zero by (\ref{eq:ortho-discre}). Similarly, we can discuss for the other sets $E_2,$ $E_3,$  $E_4,$ and will arrive on the same conclusion.  The converse part follows immediately by Lemma \ref{L:BesselCharecterization}.   The remaining part follows easily from the definition of $\tilde G_{\mc A, \mc A'}(\alpha)$ for a.e. $\alpha \in \widehat \Gamma.$
 	\end{proof}
	 The following immediate consequence  can be observed easily by the   Theorem \ref{T:dualityMultiGenerators}.
	 \begin{cor}\label{Cor: Multi-dual} For  $t, {t'}\in \mc N,$ let  $[ \varphi_{t'}, \psi_{t}](\alpha)=\delta_{t,t'}$ for a.e. $\alpha \in \widehat \Gamma$ along with the assumptions of Theorem \ref{T:dualityMultiGenerators}. Then  $\mc E^{\Gamma}(\mc A')$ is an $\mc S^{\Gamma}(\mc A)$-subspace dual  to $\mc E^{\Gamma}(\mc A).$  
	 \end{cor}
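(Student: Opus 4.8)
The plan is to reduce the claim directly to part (i) of Theorem \ref{T:dualityMultiGenerators}, which gives a clean characterization of the $\mc S^{\Gamma}(\mc A)$-subspace dual property in terms of the Zak transform and the bracket. Since we are already assuming all the hypotheses of that theorem (both $\mc E^{\Gamma}(\mc A)$ and $\mc E^{\Gamma}(\mc A')$ are Bessel), the only thing to do is verify the pointwise reproducing identity
\[
(\mc Z\varphi_{t'})(\alpha, \Gamma x)=\sum_{t\in \mc N}  [\varphi_{t'}, {\psi_t}](\alpha)  \,  (\mc Z\varphi_t)(\alpha, \Gamma x) \quad \text{for a.e.}\ \alpha \in \widehat \Gamma,\ \Gamma x \in \Gamma\backslash\G,
\]
for every $t'\in\mc N$. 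Substituting the hypothesis $[\varphi_{t'},\psi_t](\alpha)=\delta_{t,t'}$ for a.e.\ $\alpha\in\widehat\Gamma$ collapses the right-hand sum to the single term $t=t'$, namely $(\mc Z\varphi_{t'})(\alpha,\Gamma x)$, so the identity holds trivially. By the ``if'' direction of Theorem \ref{T:dualityMultiGenerators}(i), this establishes that $\mc E^{\Gamma}(\mc A')$ is an $\mc S^{\Gamma}(\mc A)$-subspace dual to $\mc E^{\Gamma}(\mc A)$.

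One small point worth making explicit in the write-up is that the series $\sum_{t\in\mc N} [\varphi_{t'},\psi_t](\alpha)\,(\mc Z\varphi_t)(\alpha,\Gamma x)$ is a priori only guaranteed to converge in $L^2(\Gamma\backslash\G)$ for a.e.\ $\alpha$ (via the pre-Gramian $\mf J_{\mc A}(\alpha)$ acting on the $\ell^2(\mc N)$-sequence $\{[\varphi_{t'},\psi_t](\alpha)\}_{t\in\mc N}$, which is $\ell^2$ by the Bessel estimate computed in the proof of Theorem \ref{T:dualityMultiGenerators}(i)); but when the coefficient sequence is $\{\delta_{t,t'}\}_{t\in\mc N}$ there is no convergence issue at all, and $\mf J_{\mc A}(\alpha)$ applied to the standard basis vector $e_{t'}$ of $\ell^2(\mc N)$ returns exactly $\{(\mc Z\varphi_{t'})(\alpha,\Gamma x)\}_{\Gamma x\in\Gamma\backslash\G}$. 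So one can equivalently phrase the verification through the ``equivalent part'' of Theorem \ref{T:dualityMultiGenerators}(i): $\{(\mc Z\varphi_{t'})(\alpha,\Gamma x)\}_{\Gamma x\in\Gamma\backslash\G}=\mf J_{\mc A}(\alpha)\{\delta_{t,t'}\}_{t\in\mc N}$, which is immediate from the definition of $\mf J_{\mc A}(\alpha)$.

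There is essentially no obstacle here — the corollary is a pure specialization, and the content lies entirely in Theorem \ref{T:dualityMultiGenerators}. The proof is a one- or two-line argument, and I would present it as such, perhaps remarking that the condition $[\varphi_{t'},\psi_t](\alpha)=\delta_{t,t'}$ is the exact analogue of a biorthogonality relation on the fiber spaces, foreshadowing the biorthogonal-system discussion in Section \ref{s:biortho}.
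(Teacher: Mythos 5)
Your proof is correct and takes exactly the route the paper intends: the paper offers no written proof beyond noting the corollary is "an immediate consequence" of Theorem \ref{T:dualityMultiGenerators}, and your specialization of part (i) (the sum collapsing to the single term $t=t'$) is precisely that observation, with the $\ell^2$/pre-Gramian remark as a useful bonus.
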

 The following result describes few more  properties of orthogonal frames using mixed dual-Gramian operator, which is a successor of  \cite[Theorem 3.2]{gumber2019pairwisebuletin} and \cite[Theorem 2.2]{kim2007pair}.
 \begin{thm}\label{T:Orthogonal} Let $\mc A=\{\varphi_t\}_{t\in \mc N}$ and $\mc A'=\{\psi_t\}_{t\in \mc N}$  be two sequences of  functions in $L^2 (\mathscr G)$ such that   $\mc  E^{\Gamma}(\mc A)$ and $\mc  E^{\Gamma}(\mc A')$ are $\mc S^\Gamma(\mc A)$ and $\mc S^\Gamma(\mc A')$-subspace frames, respectively. If   $\mc S^\Gamma(\mc A)=\mc S^\Gamma(\mc A'),$   the following are equivalent:
 	\begin{enumerate}
 		\item[(i)]   $\mc  E^{\Gamma}(\mc A)$ and $\mc  E^{\Gamma}(\mc A')$ are  orthogonal  pair. 
 		\item[(ii)]  $\mf J_{\mc A}(\alpha)\mf J_{\mc A'}(\alpha)^*\mf J_{\mc A'}(\alpha)=0$ for a.e. $\alpha \in \widehat \Gamma.$
 		\item[(iii)] $G_{\mc A}(\alpha) G_{\mc A'}(\alpha)=0$ for a.e. $\alpha\in \widehat \Gamma.$ 
 	\end{enumerate} 
 	Additionally, when $\mc S^\Gamma(\mc A)=\mc S^\Gamma(\mc A') =L^2(\mathscr G),$ then $\mc  E^{\Gamma}(\mc A)$ and $\mc  E^{\Gamma}(\mc A')$ are  orthogonal  pair if and only if $ \tilde G_{\mc A,\mc A'}(\alpha)=0$ for a.e. $\alpha \in \widehat \Gamma.$
 \end{thm}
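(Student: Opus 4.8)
The plan is to establish the chain of equivalences by exploiting Proposition \ref{P:zakpre-gramian}(iii), which already translates the mixed operator $T_{\mc E^{\Gamma}(\mc A)}^*T_{\mc E^{\Gamma}(\mc A')}$ into fiberwise multiplication by the mixed dual-Gramian $\tilde G_{\mc A, \mc A'}(\alpha) = \mf J_{\mc A}(\alpha)\mf J_{\mc A'}(\alpha)^*$ after conjugating by the Zak transform. First I would record that, since $\mc Z$ is unitary and $\mc Z\mc S^\Gamma(\mc A)$ is the direct-integral field of subspaces $\mc V_{\mc A}(\alpha) := \ol{\Span}\{\mc Z\varphi_t(\alpha):t\in\mc N\} = \range \mf J_{\mc A}(\alpha)$, the condition ``$\mc E^{\Gamma}(\mc A)$ and $\mc E^{\Gamma}(\mc A')$ are an orthogonal Bessel pair'' (i.e.\ $T_{\mc E^{\Gamma}(\mc A)}^*T_{\mc E^{\Gamma}(\mc A')}=0$ on $\mc S^\Gamma(\mc A)=\mc S^\Gamma(\mc A')$) is equivalent, by the direct-integral decomposition, to $\tilde G_{\mc A,\mc A'}(\alpha)\big|_{\mc V_{\mc A'}(\alpha)}=0$ for a.e.\ $\alpha$. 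A pointwise measurable-selection argument — of exactly the type already run in the proof of Theorem \ref{T:dualityMultiGenerators}(ii) with the sets $E_1,\dots,E_4$ — upgrades the ``integrated'' vanishing to the a.e.\ pointwise vanishing; alternatively one invokes the standard fact that a decomposable operator is zero iff a.e.\ fiber is zero.

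Next I would unwind ``$\tilde G_{\mc A,\mc A'}(\alpha)\big|_{\mc V_{\mc A'}(\alpha)}=0$'' into statement (ii). Since $\mc V_{\mc A'}(\alpha)=\range\mf J_{\mc A'}(\alpha)$, vanishing of $\tilde G_{\mc A,\mc A'}(\alpha)=\mf J_{\mc A}(\alpha)\mf J_{\mc A'}(\alpha)^*$ on $\range\mf J_{\mc A'}(\alpha)$ says precisely $\mf J_{\mc A}(\alpha)\mf J_{\mc A'}(\alpha)^*\mf J_{\mc A'}(\alpha)=0$, because $\mf J_{\mc A'}(\alpha)^*\mf J_{\mc A'}(\alpha)=G_{\mc A'}(\alpha)$ maps onto a dense subset of (the closure of) $\range\mf J_{\mc A'}(\alpha)^*=(\Ker\mf J_{\mc A'}(\alpha))^\perp$... but more cleanly: $\range(\mf J_{\mc A'}(\alpha)^*\mf J_{\mc A'}(\alpha))$ is dense in $\range\mf J_{\mc A'}(\alpha)^*$, hence composing with $\mf J_{\mc A}(\alpha)\mf J_{\mc A'}(\alpha)^*$ — wait, one must be careful: we need vanishing on $\range\mf J_{\mc A'}(\alpha)\subseteq L^2(\Gamma\backslash\G)$, and $\mf J_{\mc A}(\alpha)\mf J_{\mc A'}(\alpha)^*$ precomposed with $\mf J_{\mc A'}(\alpha)$ lands us there. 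So (i)$\Leftrightarrow$(ii) follows from: $\tilde G=0$ on $\range\mf J_{\mc A'}$ $\iff$ $\tilde G\,\mf J_{\mc A'}=0$ $\iff$ $\mf J_{\mc A}\mf J_{\mc A'}^*\mf J_{\mc A'}=0$ (the middle step because $\range\mf J_{\mc A'}$ is the closure of $\range(\mf J_{\mc A'}\mf J_{\mc A'}^*)$... no—$\range\mf J_{\mc A'}\supseteq\range(\mf J_{\mc A'}\mf J_{\mc A'}^*)$ with dense inclusion, and $\tilde G$ is bounded, so vanishing on the dense subset gives vanishing on the closure, which by continuity of $T^*_{\mc E^\Gamma(\mc A)}T_{\mc E^\Gamma(\mc A')}$ restricted to $\mc S^\Gamma(\mc A')$ is what orthogonality needs). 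For (ii)$\Leftrightarrow$(iii): apply $\mf J_{\mc A}(\alpha)^*$ on the left of the identity in (ii) to get $G_{\mc A}(\alpha)\mf J_{\mc A'}(\alpha)^*\mf J_{\mc A'}(\alpha)=G_{\mc A}(\alpha)G_{\mc A'}(\alpha)=0$, giving (ii)$\Rightarrow$(iii); conversely, from $G_{\mc A}(\alpha)G_{\mc A'}(\alpha)=0$ one has $\mf J_{\mc A}^*\mf J_{\mc A}\mf J_{\mc A'}^*\mf J_{\mc A'}=0$, so $\range(\mf J_{\mc A}\mf J_{\mc A'}^*\mf J_{\mc A'})\subseteq\Ker\mf J_{\mc A}^*=(\range\mf J_{\mc A})^\perp$; but $\range(\mf J_{\mc A}\mf J_{\mc A'}^*\mf J_{\mc A'})\subseteq\range\mf J_{\mc A}$, and a subspace contained in its own orthogonal complement is zero, yielding (ii). This last ``$W\subseteq W^\perp\Rightarrow W=0$'' trick is the one genuinely slick point.

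Finally, for the last sentence: when $\mc S^\Gamma(\mc A)=\mc S^\Gamma(\mc A')=L^2(\G)$, we have $\mc V_{\mc A}(\alpha)=\mc V_{\mc A'}(\alpha)=L^2(\Gamma\backslash\G)$ for a.e.\ $\alpha$ (this requires a small lemma: the fiber spaces are full a.e.\ precisely when the TG system is complete — I would cite or reprove via the direct-integral description of $\mc S^\Gamma(\mc A)$ already used above). Then orthogonality means $\tilde G_{\mc A,\mc A'}(\alpha)$ vanishes on all of $L^2(\Gamma\backslash\G)$ for a.e.\ $\alpha$, i.e.\ $\tilde G_{\mc A,\mc A'}(\alpha)=0$ a.e., and conversely. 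I would phrase this as: $T_{\mc E^\Gamma(\mc A)}^*T_{\mc E^\Gamma(\mc A')}=0$ on $L^2(\G)$ $\iff$ (by Proposition \ref{P:zakpre-gramian}(iii) and unitarity of $\mc Z$) the decomposable operator with fibers $\tilde G_{\mc A,\mc A'}(\alpha)$ is zero $\iff$ $\tilde G_{\mc A,\mc A'}(\alpha)=0$ for a.e.\ $\alpha\in\widehat\Gamma$.

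The main obstacle I anticipate is the careful handling of ranges and closures at the fiber level — specifically making rigorous the passage from ``$\tilde G_{\mc A,\mc A'}(\alpha)$ annihilates $\range\mf J_{\mc A'}(\alpha)$'' (which is what the Bessel-pair reproducing identity gives on the dense span, then by continuity on $\mc S^\Gamma(\mc A')$) to the clean operator identities in (ii) and (iii), and the measurability bookkeeping needed to pass between the global operator equation $T^*T=0$ and the a.e.-fiberwise statements. The $E_1,\dots,E_4$ selection argument from the proof of Theorem \ref{T:dualityMultiGenerators} handles the measurability/selection issue and can be reused almost verbatim, so I expect the write-up to be short, mostly pointing back to Proposition \ref{P:zakpre-gramian} and the earlier pattern.
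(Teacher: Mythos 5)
Your proposal is correct and follows essentially the same route as the paper: fiberize $T_{\mc E^{\Gamma}(\mc A)}^*T_{\mc E^{\Gamma}(\mc A')}$ via Proposition \ref{P:zakpre-gramian}(iii), identify $\mc Z\mc S^{\Gamma}(\mc A')$ fiberwise with $\ol{\range \mf J_{\mc A'}(\alpha)}$ (Proposition \ref{P:decomposition}), and deduce the operator identities. The only difference is that you spell out the (ii)$\Leftrightarrow$(iii) step (via $\Ker \mf J_{\mc A}(\alpha)^*=(\range \mf J_{\mc A}(\alpha))^{\perp}$) and the density/closure bookkeeping explicitly, where the paper compresses these to a remark about the frame property and the bounded invertibility of $\mf J_{\mc A'}(\alpha)^*$ on the range.
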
 
 
We proceed by decomposing any  $\Gamma$-TI space as an   orthogonal direct sum of  $\mc S^{\Gamma}(\varphi_i)$'s up to countable, where $\mc E^{\Gamma}(\varphi_i)$ is a   frame for $\mc S^{\Gamma}(\varphi_i)$ with bounds $A=1$ and $B=1$ for each $i.$  Our procedure is motivated by DeBoor, DeVore, and Ron \cite{deboor1994structure} and  \cite[Theorem 3.3]{bownik2000structure}. Their analysis relied on the Fourier transform, whereas ours makes use of the Zak transform.
 \begin{prop}\label{P:decomposition} For   $\varphi \in L^2(\mathscr G),$  a function	$f\in \mc S^{\Gamma}(\varphi)$ if and only if for a.e. $\alpha \in \widehat \Gamma,$  
 	$\mc Zf(\alpha, \Gamma x)=\mathfrak m(\alpha)\mc Z\varphi(\alpha, \Gamma x)$
 	for all $\Gamma x \in \Gamma \backslash \G,$ where  $\mathfrak m$ is a member of the weighted space $ L^2 \left(\widehat \Gamma, [\varphi, \varphi] \right).$ 

 	Moreover, if $V$ is an $\Gamma$-TI subspace of  $L^2(\mathscr G)$ then  
 	there are  at most countably many   $\varphi_n's$ in $V$ such that 
 	$f\in V$ can be decomposed   as follows for a.e. $\alpha \in \widehat \Gamma$: 
 	\begin{equation}\label{EQ:Zf=}
 	\mc Zf(\alpha, \Gamma x)=\sum_{n\in \mathbb N}\mathfrak m_n(\alpha)\mc Z \varphi_n(\alpha, \Gamma x)  \  \mbox{for all}\  \Gamma x \in \Gamma \backslash \G,  
 	\end{equation}
 	where  $\mf m_n\in L^2(\widehat \Gamma\cap \Omega_{\varphi_n})$ and    $	\Omega_{\varphi_n}=\{\alpha \in \widehat \Gamma: [\varphi_n,\varphi_n]\neq 0\}  .$ 
 \end{prop}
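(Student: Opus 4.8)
The plan is to transport everything through the unitary Zak transform $\mc Z$ for the pair $(\mathscr G,\Gamma)$ and to argue fibrewise over $\widehat\Gamma$, using the intertwining relation $\mc Z(L_\gamma\varphi)(\alpha,\Gamma x)=\alpha(\gamma^{-1})\,\mc Z\varphi(\alpha,\Gamma x)$ already established inside the proof of Lemma~\ref{L:matrixelement}. For the first equivalence I would begin by noting that $\mc Z$ carries $\Span\,\mc E^\Gamma(\varphi)$ bijectively onto the set of functions $(\alpha,\Gamma x)\mapsto p(\alpha)\mc Z\varphi(\alpha,\Gamma x)$ with $p$ ranging over the trigonometric polynomials on $\widehat\Gamma$ (finite linear combinations of characters $\widehat\gamma$, $\gamma\in\Gamma$, under $\Gamma\cong\widehat{\widehat\Gamma}$). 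The identity
\[
\int_{\widehat\Gamma}\!\int_{\Gamma\backslash\mathscr G}\big|\mathfrak m(\alpha)\mc Z\varphi(\alpha,\Gamma x)\big|^2\,\dGammax\,\dalpha=\int_{\widehat\Gamma}|\mathfrak m(\alpha)|^2\,[\varphi,\varphi](\alpha)\,\dalpha
\]
shows that $T\colon\mathfrak m\mapsto\big((\alpha,\Gamma x)\mapsto\mathfrak m(\alpha)\mc Z\varphi(\alpha,\Gamma x)\big)$ is an isometry from $L^2(\widehat\Gamma,[\varphi,\varphi])$ into $L^2(\widehat\Gamma\times\Gamma\backslash\mathscr G)$, so its range is closed. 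Since $\Gamma$ is discrete, $\widehat\Gamma$ is compact, hence $[\varphi,\varphi]\in L^1(\widehat\Gamma)$ (Fubini) and $[\varphi,\varphi]\,d\mu_{\widehat\Gamma}$ is finite; by Stone--Weierstrass the trigonometric polynomials are dense in $C(\widehat\Gamma)$, hence in $L^2(\widehat\Gamma,[\varphi,\varphi])$. Thus $\range T=\overline{T(\text{trig.\ polys})}=\mc Z(\mc S^\Gamma(\varphi))$, and applying $\mc Z^{-1}$ gives exactly the claimed description of membership in $\mc S^\Gamma(\varphi)$.

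For the \emph{moreover} part I would first fix a countable dense subset $\{h_j\}_{j\in\mathbb N}$ of $V$ (possible as $L^2(\mathscr G)$ is separable); since $V$ is closed and $\Gamma$-translation invariant, $V=\mc S^\Gamma(\{h_j\}_{j\in\mathbb N})$. Next I would invoke the range-function description of $\Gamma$-TI subspaces expressed through the Zak transform (\cite{iverson2015subspaces}; the $\mc Z$-analogue of the classical Helson/de~Boor--DeVore--Ron--Bownik structure theorem \cite{deboor1994structure,bownik2000structure}): there is a measurable range function $J$ with $J(\alpha)=\ol{\Span}\{\mc Zh_j(\alpha,\cdot):j\in\mathbb N\}$ for a.e.\ $\alpha$ and
\[
\mc Z(V)=\big\{\Phi\in L^2(\widehat\Gamma\times\Gamma\backslash\mathscr G):\Phi(\alpha,\cdot)\in J(\alpha)\ \text{for a.e.\ }\alpha\big\}.
\]
Then I would run Gram--Schmidt fibrewise on $\big(\mc Zh_j(\alpha,\cdot)\big)_j$ (normalising where the successive residual is nonzero, setting it to $0$ otherwise) to obtain measurable vector fields $e_n\colon\widehat\Gamma\to L^2(\Gamma\backslash\mathscr G)$, $n\in\mathbb N$, with $\|e_n(\alpha)\|\in\{0,1\}$ and $\{e_n(\alpha):e_n(\alpha)\neq 0\}$ an orthonormal basis of $J(\alpha)$ for a.e.\ $\alpha$. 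As $\int_{\widehat\Gamma}\|e_n(\alpha)\|^2\,\dalpha\le\mu_{\widehat\Gamma}(\widehat\Gamma)<\infty$ and $\mc Z$ is onto, there are $\varphi_n\in L^2(\mathscr G)$ with $\mc Z\varphi_n(\alpha,\cdot)=e_n(\alpha)$; their fibres lie in $J(\alpha)$, so $\varphi_n\in V$, and $[\varphi_n,\varphi_n](\alpha)=\|e_n(\alpha)\|^2$, whence $\Omega_{\varphi_n}=\{\alpha:e_n(\alpha)\neq 0\}$.

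Finally, for $f\in V$ the range-function identity gives $\mc Zf(\alpha,\cdot)\in J(\alpha)$ for a.e.\ $\alpha$, so expanding in the orthonormal basis $\{e_n(\alpha)\}_n$ I get
\[
\mc Zf(\alpha,\Gamma x)=\sum_{n\in\mathbb N}\mathfrak m_n(\alpha)\,\mc Z\varphi_n(\alpha,\Gamma x),\qquad \mathfrak m_n(\alpha):=\big\langle\mc Zf(\alpha,\cdot),e_n(\alpha)\big\rangle,
\]
which is (\ref{EQ:Zf=}) after applying $\mc Z^{-1}$ (the series converging in $L^2(\widehat\Gamma\times\Gamma\backslash\mathscr G)$). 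Fibrewise Parseval, $\sum_n|\mathfrak m_n(\alpha)|^2=\|\mc Zf(\alpha,\cdot)\|^2$, integrated over $\widehat\Gamma$ gives $\sum_n\|\mathfrak m_n\|_{L^2(\widehat\Gamma)}^2=\|f\|^2<\infty$; in particular each $\mathfrak m_n\in L^2(\widehat\Gamma)$, and it vanishes off $\Omega_{\varphi_n}$ because $e_n$ does, i.e.\ $\mathfrak m_n\in L^2(\widehat\Gamma\cap\Omega_{\varphi_n})$. The same computation together with the first part also yields that $\mc E^\Gamma(\varphi_n)$ is a Parseval frame for $\mc S^\Gamma(\varphi_n)$, that the spaces $\mc S^\Gamma(\varphi_n)$ are mutually orthogonal, and that $V=\bigoplus_n\mc S^\Gamma(\varphi_n)$.

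I expect the only genuinely delicate step to be the fibrewise structure theory used in the \emph{moreover} part: establishing — or quoting in precisely this form — the identity $\mc Z(V)=\{\Phi:\Phi(\alpha,\cdot)\in J(\alpha)\}$ (which is in particular what forces the constructed $\varphi_n$ to lie in $V$), and checking that the fibrewise Gram--Schmidt really produces measurable $e_n$, the latter being a standard but somewhat fiddly bookkeeping over the measurable subsets of $\widehat\Gamma$ on which successive residuals vanish. Everything else is routine transport through the unitary $\mc Z$ together with the classical density of trigonometric polynomials.
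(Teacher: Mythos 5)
Your proposal is correct and follows essentially the same route as the paper: the first equivalence is obtained in both cases by showing that $\mf m \mapsto \mf m\,\mc Z\varphi$ is an isometry of $L^2(\widehat\Gamma,[\varphi,\varphi])$ onto $\mc Z(\mc S^\Gamma(\varphi))$, extending the trigonometric-polynomial correspondence on $\Span\,\mc E^\Gamma(\varphi)$ by density. For the \emph{moreover} part the paper simply invokes the orthogonal decomposition $V=\oplus_n\mc S^\Gamma(\varphi_n)$ supplied by the cited structure theorems and applies the first part to each $P_nf$, whereas you open that black box and construct the $\varphi_n$ via the range-function description and fibrewise Gram--Schmidt --- a more self-contained rendering of the same argument.
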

 \begin{proof} For   $f\in \mbox{span}\ \mc E^{\Gamma}(\varphi),$ a representation of  $f$ is   of the form $f=\sum_{\gamma \in \Gamma} c_\gamma L_{\gamma}\varphi ,$  where only  finitely many $c_{\gamma}$'s are  non-zero, and hence by applying the Zak transform on both the sides, we obtain,
 	\begin{align*} 
 	(\mc Zf)(\alpha, \Gamma x) 
 	=\sum_{\gamma\in\Gamma} c_\gamma\mc Z\varphi(\alpha, \Gamma x)\overline{\alpha(\gamma)}
 	=\mc Z\varphi(\alpha, \Gamma x)\sum_{\gamma\in\Gamma}c_\gamma\overline{\alpha(\gamma)}
 	=\mathfrak m(\alpha)\mc Z\varphi(\alpha, \Gamma x) 
 	\end{align*}
 	for a.e. $\alpha \in \widehat \Gamma,$ and $\Gamma x \in \Gamma \backslash \G,$   	where $\mathfrak m:\widehat \Gamma\mapsto \mathbb C,$ $\mathfrak m(\alpha):=\sum_{\gamma\in\Gamma}
 	c_\gamma\overline{\alpha(\gamma)}.$
 	Conversely,   we can recover  $f\in \mbox{span}\ \mc E^{\Gamma}(\varphi)$  by above relation. It only  remains to generalize it for $f\in \mc S^{\Gamma}(\varphi).$ For this 
 	define an operator  $ \msc  U : \mbox{span}  \ \mc E^{\Gamma}(\varphi) \ra \mc P$ by $\msc Uf=\mf m,$ where $\mc P$ is the collection of  all trigonometric polynomials, which  is an isometry and onto, follows by 
 	{\small 	\begin{align}\label{eq:normf}
 		\|f\|^2
 		=\int_{\widehat \Gamma}\int_{\Gamma \backslash \G}| \mf m(\alpha)\mc Z\varphi(\alpha,\Gamma x)|^2\ \dGammax
 		\dalpha  
 		=\int_{\widehat \Gamma}| \mf m(\alpha)|^2 [ \varphi, \varphi](\alpha)
 	\	\dalpha  
 		=\|\mf m\|_{{L^2(\widehat \Gamma ,[ \varphi, \varphi]})}^2.
 		\end{align}}
 	Therefore, there exists a unique isometry $\tilde{\msc U}: \mc S^{\Gamma}(\varphi) \ra  \ol{\mc P}=L^2(\widehat \Gamma,[ \varphi, \varphi]).$
 	The moreover  parts follow by observing orthogonal projections $P_n$'s on $\mc S^{\Gamma}(\varphi_n)$ and the following calculations   for every $f\in V$:
 	$$ 	\mc Zf(\alpha, \Gamma x)=\sum_{n\in \mathbb N}\mc Z(P_nf)(\alpha, \Gamma x)=\sum_{n\in \mathbb N}\mathfrak m_n(\alpha)\mc Z \varphi_n(\alpha, \Gamma x)
 	, \ \mf m_n \in L^2(\widehat \Gamma\cap \Omega_{\varphi_n}),$$ 
 	a.e.   \ $\alpha \in \widehat \Gamma$ and $\Gamma x \in \Gamma \backslash \G.$ Thus the result follows. 
 	 \end{proof}
 \begin{proof} [{Proof of  Theorem \ref{T:Orthogonal}}] The $\Gamma$-TG systems 	$\mc  E^{\Gamma}(\mc A)$ and $\mc  E^{\Gamma}(\mc A')$  are   orthogonal   if and only if $T_{\mc E^{\Gamma}(\mc A)}^* T_{\mc E^{\Gamma}(\mc A')}f=0$  for $f\in \mc S^\Gamma(\mc A).$  Equivalently, 
	\begin{align*}
	0=&	 \|\mc Z (T_{\mc E^{\Gamma}(\mc A)}^* T_{\mc E^{\Gamma}(\mc A')})f\|^2 
	= \int_{\widehat \Gamma} [(T_{\mc E^{\Gamma}(\mc A)}^* T_{\mc E^{\Gamma}(\mc A')})f,  (T_{\mc E^{\Gamma}(\mc A)}^* T_{\mc E^{\Gamma}(\mc A')})f] (\alpha)\ \dalpha\\
	&=\int_{\Gamma \backslash \G}\int_{\widehat \Gamma}|\mc ZT_{\mc E^{\Gamma}(\mc A)}^* T_{\mc E^{\Gamma}(\mc A')}f(\alpha, \Gamma x)|^2\ \dalpha\ \dGammax.
	\end{align*}
 Further, it is equivalent to    $$\{\mc Z (T_{\mc E^{\Gamma}(\mc A)}^* T_{\mc E^{\Gamma}(\mc A')})f(\alpha,\Gamma x)\}_{\Gamma x\in \Gamma\backslash \G}=0  \ \mbox{for a.e.} \  \alpha \in \widehat \Gamma.
	$$
	Since $
	\left\{\mc Z (T_{\mc E^{\Gamma}(\mc A)}^*T_{\mc E^{\Gamma}(\mc A')} f)(\alpha, \Gamma x)\right\}_{\Gamma x\in\Gamma \backslash \mathscr G}
	=\mf J_{\mc A}(\alpha)\mf J_{\mc A'}^*(\alpha)\left\{\mc Zf(\alpha, \Gamma x)\right\}_{\Gamma x \in\Gamma \backslash \mathscr G}
	$
	by Proposition \ref{P:zakpre-gramian},  and also from Proposition \ref{P:decomposition}, $f\in \mc S^\Gamma(\mc A')$    if and only if  
	$\{\mc Zf(\alpha,\Gamma x)\}_{\Gamma x \in \Gamma \backslash \G}=\left\{\sum_{t\in \mc N} \mf m_t(\alpha)\mc Z\psi_t(\alpha, \Gamma x)\right\}_{\Gamma x\in \Gamma\backslash \G}=\mf J_{\mc A'}(\alpha)\left\{\mf m_t(\alpha)\right\}_{t\in \mc N}$  for a.e. $\alpha \in \widehat \Gamma.$
	Therefore we get   
	$$\left\{\mc ZT_{\mc E^{\Gamma}(\mc A)}^*T_{\mc E^{\Gamma}(\mc A')} f(\alpha, \Gamma x)\right\}_{\Gamma x\in\Gamma \backslash \mathscr G}
	=\mf J_{\mc A}(\alpha)\mf J_{\mc A'}(\alpha)^* \mf J_{\mc A'}(\alpha)\left\{\mf m_t(\alpha)\right\}_{t\in \mc N} \ \mbox{a.e.} \ \alpha \in \widehat \Gamma.$$
	Thus (i) is equivalent to (ii) follows by observing that $f$ is an arbitrary member of $\mc S^\Gamma(\mc A).$  	The equivalency between (ii) and (iii) follows immediately by just observing frame property of  $\mc E^\Gamma(\mc A).$

	When  $\mc S^\Gamma(\mc A)=L^2(\G),$ $\mf J_{\mc A'}(\alpha)^*$ has bounded inverse on the range of $\mf J_{\mc A'}(\alpha),$ and hence the result follows.
\end{proof}

Next, we observe that a new orthogonal pair can be constructed  from the given orthogonal  pair by involving  $\Gamma$-periodic functions. A function $f:\G \ra 
\mathbb C$ is said to be \textit{$\Gamma$-periodic} if $f(x+\gamma)=f(x)$ for all $\gamma \in \Gamma, x\in \G.$ 
\begin{prop} Under the assumptions of Theorem \ref{T:dualityMultiGenerators}, let $\mc  E^{\Gamma}(\mc A')$ be an $\mc S^\Gamma(\mc A)$-subspace orthogonal to $\mc  E^{\Gamma}(\mc A).$ If $h$ is  an $\Gamma$-periodic function on $\G,$    $\mc  E^{\Gamma}(h\mc A')$ is also  an $\mc S^\Gamma(\mc A)$-subspace orthogonal to $\mc  E^{\Gamma}(\mc A),$ where   $h\mc A'=\{h\psi :   \psi \in \mc A',    (h\psi)(x)=h(x)\psi(x), x\in \G \}.$
\end{prop}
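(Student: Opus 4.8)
The plan is to reduce the statement to the reproducing-type identity that defines $\mc S^\Gamma(\mc A)$-subspace orthogonality and then to carry the $\Gamma$-periodic weight $h$ through that identity, using only that left translation is multiplicative.

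The first step is to pin down the meaning of $\Gamma$-periodicity: it says exactly that $L_\gamma h = h$ for every $\gamma\in\Gamma$, since $(L_\gamma h)(x)=h(\gamma^{-1}x)=h(x)$. As $u\mapsto L_\gamma u$ is an algebra homomorphism on functions, it follows that $L_\gamma(h\psi)=(L_\gamma h)(L_\gamma\psi)=h\,(L_\gamma\psi)$ for every $\psi\in L^2(\mathscr G)$; in particular $\mc E^\Gamma(h\mc A')=\{\,h\,L_\gamma\psi_t:\gamma\in\Gamma,\ t\in\mc N\,\}$. I shall take $h$ essentially bounded — which is in any case what is needed for $\mc E^\Gamma(h\mc A')$ to remain a Bessel family, so that \emph{subspace orthogonal} is a meaningful notion for it — so that multiplication $M_h$ by $h$ is a bounded operator on $L^2(\mathscr G)$ and the double series occurring below converge unconditionally in $L^2(\mathscr G)$ thanks to the Bessel hypotheses on $\mc E^\Gamma(\mc A)$ and $\mc E^\Gamma(\mc A')$.

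The second step is the computation itself. Unfolding the definition, what has to be shown is that $\sum_{t\in\mc N}\sum_{\gamma\in\Gamma}\langle f, L_\gamma\varphi_t\rangle\,L_\gamma(h\psi_t)=0$ for every $f\in\mc S^\Gamma(\mc A)$. Substituting $L_\gamma(h\psi_t)=h\,L_\gamma\psi_t$ and pulling the bounded factor $h$ (which depends on neither $t$ nor $\gamma$) out of the unconditionally convergent sum gives
$$\sum_{t\in\mc N}\sum_{\gamma\in\Gamma}\langle f, L_\gamma\varphi_t\rangle\,L_\gamma(h\psi_t)=h\cdot\Big(\sum_{t\in\mc N}\sum_{\gamma\in\Gamma}\langle f, L_\gamma\varphi_t\rangle\,L_\gamma\psi_t\Big)=h\cdot 0=0,$$
the inner sum vanishing precisely because $\mc E^\Gamma(\mc A')$ is $\mc S^\Gamma(\mc A)$-subspace orthogonal to $\mc E^\Gamma(\mc A)$. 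This is the assertion. Equivalently, one may run the argument through the bracket criterion of Theorem~\ref{T:dualityMultiGenerators}(ii): $\Gamma$-periodicity makes the Zak transform act by $\mc Z(h\psi_t)(\alpha,\Gamma x)=\widetilde h(\Gamma x)\,\mc Z\psi_t(\alpha,\Gamma x)$, where $\widetilde h:=h\circ\Xi$ is the induced function on $\Gamma\backslash\mathscr G$, and one then propagates the a.e.\ vanishing of $\sum_{t}[\,f,\psi_t](\alpha)\,[\varphi_t,g](\alpha)$ from $\mc A'$ to $h\mc A'$.

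Since the computation is short, the only points that genuinely need care are the bookkeeping ones: that $\mc E^\Gamma(h\mc A')$ is again Bessel (which is exactly why $h\in L^\infty$ is imposed), and the legitimacy of interchanging multiplication by $h$ with the infinite sum; both are immediate from boundedness of $M_h$ together with the unconditional $L^2$-convergence supplied by the Bessel bounds of the two given systems. I do not expect any deeper obstacle.
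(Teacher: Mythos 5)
Your reduction of the statement is where the trouble lies: you have transposed the analysis and synthesis roles in the definition of subspace orthogonality. According to (\ref{orthogonal}) as it is actually used in the proof of Theorem \ref{T:dualityMultiGenerators}(ii) (where the condition reads $\sum_{t,\gamma}\langle f, L_\gamma\psi_t\rangle\langle L_\gamma\varphi_t,g\rangle=0$), the assertion ``$\mc E^{\Gamma}(h\mc A')$ is $\mc S^{\Gamma}(\mc A)$-subspace orthogonal to $\mc E^{\Gamma}(\mc A)$'' means
\begin{equation*}
\sum_{t\in\mc N}\sum_{\gamma\in\Gamma}\bigl\langle f, L_\gamma(h\psi_t)\bigr\rangle\, L_\gamma\varphi_t=0\qquad\text{for all } f\in\mc S^{\Gamma}(\mc A),
\end{equation*}
i.e.\ the candidate orthogonal system occupies the conjugate-linear slot and the original family $\mc E^{\Gamma}(\mc A)$ is the one being synthesized. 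This is $T_{\mc X}^*T_{\mc Y}\big|_{\mc K}=0$, whereas what you verify is $T_{\mc Y}^*T_{\mc X}\big|_{\mc K}=0$; these are adjoint to one another only after composing with $P_{\mc K}$ on opposite sides, and they are not equivalent on a proper subspace $\mc K$. Your identity $\sum_{t,\gamma}\langle f,L_\gamma\varphi_t\rangle L_\gamma(h\psi_t)=h\cdot\sum_{t,\gamma}\langle f,L_\gamma\varphi_t\rangle L_\gamma\psi_t$ is correct but proves the transposed statement. With the correct orientation the periodic factor lands \emph{inside} the inner product: $\langle f, h\,L_\gamma\psi_t\rangle=\langle \overline{h}f, L_\gamma\psi_t\rangle$, so to invoke the hypothesis you would need $\overline{h}f\in\mc S^{\Gamma}(\mc A)$ — and $\Gamma$-TI spaces are not closed under multiplication by $\Gamma$-periodic functions (on the Zak side this multiplies the fibre variable $\Gamma x$ and need not preserve the range function $J_{\mc A}(\alpha)$). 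The same gap afflicts your alternative route through the bracket criterion, since $[f,h\psi_t]=[\overline{h}f,\psi_t]$.

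This is not a repairable bookkeeping issue: with the paper's orientation the proposition is in fact false. Take $\G=\mathbb R$, $\Gamma=\mathbb Z$, $\varphi=\chi_{[0,1)}$, $\psi(x)=e^{2\pi i x}\chi_{[0,1)}(x)$ and $h(x)=e^{-2\pi i x}$. Both systems of integer translates are orthonormal, and $[\varphi,\psi](\alpha)=\int_0^1 e^{-2\pi i y}\,dy=0$, so $\mc E^{\mathbb Z}(\psi)$ is $\mc S^{\mathbb Z}(\varphi)$-subspace orthogonal to $\mc E^{\mathbb Z}(\varphi)$ by Corollary \ref{C:DualMain}(ii); but $h\psi=\varphi$, and $\sum_{k}\langle \varphi,\varphi(\cdot-k)\rangle\varphi(\cdot-k)=\varphi\neq 0$, so $\mc E^{\mathbb Z}(h\psi)$ is a dual, not an orthogonal companion. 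You are in good company: the paper's own one-line proof commits essentially the same sleight of hand, pulling $\overline{h(x)}$ out of $\langle f,(h\psi_t)(\cdot-\gamma)\rangle$ even though the argument of $h$ there is the integration variable of the inner product rather than the free variable $x$. What your computation (and, properly read, the paper's) legitimately establishes is the variant in which $h$ multiplies the \emph{synthesized} family, e.g.\ that $\sum_{t,\gamma}\langle f,L_\gamma\psi_t\rangle L_\gamma(h\varphi_t)=0$ for all $f\in\mc S^{\Gamma}(\mc A)$; that statement is true, but it is not the one asserted.
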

\begin{proof} The result follows by observing   
	$\sum_{t\in \mc N}\sum_{\gamma \in\Gamma}\langle f, \psi_t(x-\gamma)\rangle \varphi_t(x-\gamma)=0,$ and  $$ \sum_{t\in \mc N}\sum_{\gamma\in \Gamma}\langle f, (h\psi_t)(x-\gamma)\rangle \varphi_t(x-\gamma)= \overline{h(x)}\sum_{t\in \mc N}\sum_{\gamma\in \Gamma}\langle f, \psi_t(x-\gamma)\rangle \varphi_t(x-\gamma)$$
	for $x \in \G$ and  $f\in L^2(\G).$
\end{proof}
  \subsection{Application to singly generated systems} \label{ss:disc-singlegen}
 For  a function $\varphi\in L^2 (\G),$ we recall the $\Gamma$-TG system   $\mc E^{\Gamma} (\varphi)$ and its associated  $\Gamma$-TI space $\mc S^{\Gamma}(\varphi)$ from (\ref{TIsystem}). The following consequences of Theorem   \ref{T:dualityMultiGenerators}, state about an $\mc S^{\Gamma}(\varphi)$-subspace duals/ orthogonal to the system $\mc E^{\Gamma} (\varphi).$ 

The following Corollary is a generalization to the   locally compact  group setup of \cite[Theorem 4.1]{christensen2004oblique} 
\cite[Corrollary 4.6]{heil2009duals}, \cite[Proposition 1]{christensen2005generalized}  for $L^2(\mathbb R^n)$ by the action of integer   translations.
 	\begin{cor}\label{C:DualMain}
		Let  $\varphi$ and $\psi$ be two functions in $L^2(\mathscr G)$   such that the   corresponding $\Gamma$-TG   systems $\mc E^{\Gamma}(\varphi)$ and $\mc E^{\Gamma}(\psi)$ are  Bessel. Assume a  measurable set  $ \Omega_{\varphi}$ defined by 	$ \Omega_{\varphi}:=\left\{\alpha \in \widehat \Gamma: [ \varphi, \varphi](\alpha)  \neq 0\right\}.$  Then the following are true:
		\begin{enumerate}
			\item [(i)]   $\mc E^{\Gamma}(\psi)$ is an $\mc S^{\Gamma}(\varphi)$-subspace dual to $\mc E^{\Gamma}(\varphi)$ if and only if   
			$
			[ \varphi, \psi](\alpha)
			=1  \  \a.e.  \ \alpha \in   \Omega_{\varphi}. 
			$

			\item [(ii)]  $\mc E^{\Gamma}(\psi)$ is an $\mc S^{\Gamma}(\varphi)$-subspace orthogonal to $\mc E^{\Gamma}(\varphi)$ if and only if    
			$
			[ \varphi,  \psi](\alpha)
			=0 \ a.e.  \ \alpha \in   \Omega_{\varphi}. 
			$
			In case of either  one holds, we have $[\varphi,  \psi](\alpha)=0$ for a.e. $\alpha \in \widehat \Gamma,$ which implies  $\mc E^{\Gamma}(\varphi)$  is also an   $\mc S^{\Gamma}(\psi)$-subspace orthogonal to $\mc E^{\Gamma}(\psi).$ 
		\end{enumerate}
%
Moreover, the same can be deduced in terms of the fiberization  $\mathscr T$ for an abelian pair $(\mc G, \Lambda).$
	\end{cor}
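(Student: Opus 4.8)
The plan is to read both statements off the multi-generator characterisation in Theorem~\ref{T:dualityMultiGenerators}, specialising the index set to a single point so that every sum $\sum_{t\in\mc N}$ collapses to one term; the only genuine work is a fibrewise-in-$\alpha$ analysis of the resulting identities. The one fact I would keep in the foreground is that, for a.e.\ $\alpha\in\widehat\Gamma$, the fibre $\mc Z\varphi(\alpha,\cdot)$ is the zero element of $L^2(\Gamma\backslash\G)$ exactly when $[\varphi,\varphi](\alpha)=\|\mc Z\varphi(\alpha,\cdot)\|_{L^2(\Gamma\backslash\G)}^2=0$, i.e.\ exactly when $\alpha\notin\Omega_\varphi$.

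For part (i), Theorem~\ref{T:dualityMultiGenerators}(i) applied with $\mc A=\{\varphi\}$ and $\mc A'=\{\psi\}$ says that $\mc E^\Gamma(\psi)$ is an $\mc S^\Gamma(\varphi)$-subspace dual to $\mc E^\Gamma(\varphi)$ if and only if $\big(1-[\varphi,\psi](\alpha)\big)\mc Z\varphi(\alpha,\Gamma x)=0$ for a.e.\ $\alpha\in\widehat\Gamma$ and $\Gamma x\in\Gamma\backslash\G$. I would use Fubini to fix $\alpha$: for a.e.\ such $\alpha$ the function $\Gamma x\mapsto\big(1-[\varphi,\psi](\alpha)\big)\mc Z\varphi(\alpha,\Gamma x)$ is the zero element of $L^2(\Gamma\backslash\G)$, hence either $[\varphi,\psi](\alpha)=1$ or $\mc Z\varphi(\alpha,\cdot)=0$, and by the remark above the second alternative means $\alpha\notin\Omega_\varphi$. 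This is precisely $[\varphi,\psi](\alpha)=1$ a.e.\ on $\Omega_\varphi$, and conversely that identity gives the displayed equation back a.e. For part (ii), Theorem~\ref{T:dualityMultiGenerators}(ii) specialises to: $\mc E^\Gamma(\psi)$ is $\mc S^\Gamma(\varphi)$-subspace orthogonal to $\mc E^\Gamma(\varphi)$ if and only if $[f,\psi](\alpha)[\varphi,g](\alpha)=0$ for a.e.\ $\alpha$, for all $f\in\mc S^\Gamma(\varphi)$ and $g\in L^2(\G)$. Taking $f=g=\varphi$ forces $[\varphi,\psi](\alpha)[\varphi,\varphi](\alpha)=0$ a.e., hence $[\varphi,\psi]=0$ a.e.\ on $\Omega_\varphi$. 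For the converse I would invoke Proposition~\ref{P:decomposition}: every $f\in\mc S^\Gamma(\varphi)$ satisfies $\mc Zf(\alpha,\Gamma x)=\mf m(\alpha)\mc Z\varphi(\alpha,\Gamma x)$ with $\mf m\in L^2(\widehat\Gamma,[\varphi,\varphi])$, so $[f,\psi](\alpha)=\mf m(\alpha)[\varphi,\psi](\alpha)$, which vanishes on $\Omega_\varphi$ by hypothesis and vanishes off $\Omega_\varphi$ because $\mc Z\varphi(\alpha,\cdot)=0$ there; thus $[f,\psi]\equiv0$ a.e.\ and the product is identically $0$.

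For the last assertion in (ii), I would observe that off $\Omega_\varphi$ one has $\mc Z\varphi(\alpha,\cdot)=0$, so $[\varphi,\psi](\alpha)=\langle\mc Z\varphi(\alpha),\mc Z\psi(\alpha)\rangle=0$ there as well; together with vanishing on $\Omega_\varphi$ this upgrades the condition to $[\varphi,\psi]=0$ a.e.\ on all of $\widehat\Gamma$. Taking complex conjugates gives $[\psi,\varphi]=0$ a.e.\ on $\widehat\Gamma$, in particular a.e.\ on $\Omega_\psi$, and then part (ii) with $\varphi$ and $\psi$ interchanged shows that $\mc E^\Gamma(\varphi)$ is $\mc S^\Gamma(\psi)$-subspace orthogonal to $\mc E^\Gamma(\psi)$. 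Finally, the ``moreover'' claim over an abelian pair $(\mc G,\Lambda)$ follows verbatim after substituting the fiberization map $\mathscr T$ for $\mc Z$, the bracket $[\cdot,\cdot]_{\mathscr T}$ for $[\cdot,\cdot]$, $\widehat{\mc G}/\Lambda^\perp$ for $\widehat\Gamma$ and $\Lambda^\perp$ for $\Gamma\backslash\G$, using the fiberization versions of parts (i) and (ii) of Theorem~\ref{T:dualityMultiGenerators} together with Lemma~\ref{L:zak-fib} and the relation~(\ref{zak-fib}). I do not expect a real obstacle here; the one point requiring care is the bookkeeping on the null set $\{\alpha:\mc Z\varphi(\alpha,\cdot)=0\}$, i.e.\ distinguishing ``holds a.e.\ on $\Omega_\varphi$'' from ``holds a.e.\ on $\widehat\Gamma$'', which is exactly what produces the last implication in (ii).
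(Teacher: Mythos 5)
Your proposal is correct and follows essentially the same route as the paper: both specialise Theorem~\ref{T:dualityMultiGenerators} to a single generator, take $f=g=\varphi$ for the orthogonality direction, and upgrade the condition from $\Omega_\varphi$ to all of $\widehat\Gamma$ by noting that the fibre $\mc Z\varphi(\alpha,\cdot)$ vanishes off $\Omega_\varphi$ (the paper phrases this last step via the Cauchy--Schwarz estimate $|[\varphi,\psi](\alpha)|\le([\varphi,\varphi](\alpha))^{1/2}([\psi,\psi](\alpha))^{1/2}$, which is equivalent to your norm-zero observation). Your fibrewise dichotomy for (i) and the use of Proposition~\ref{P:decomposition} for the converse of (ii) just make explicit what the paper leaves as ``follows easily.''
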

\begin{proof}
The result  follows easily  by choosing $\varphi_t=\varphi,$ and $\psi_t=\psi$ for every $t$ in  Theorem \ref{T:dualityMultiGenerators} and $f=g=\varphi_t.$

Next assume that   $[ \varphi,  \psi](\alpha)=0$ for a.e. $\alpha \in \Omega_{\varphi}.$ Then  first note that  $[\varphi,\varphi](\alpha)=0$  on  a.e. $\widehat \Gamma \backslash \Omega_{\varphi},$ and hence using the  Cauchy-Schwarz inequality in the following estimate   for a.e. $\alpha \in \widehat \Gamma,$ 
\begin{align} \label{eq:cauchyschwartzbracket}
|[\varphi,\psi](\alpha)| 	
&\leq	\int_{ \Gamma \backslash \mathscr G}\left|\mc Z\varphi(\alpha,\Gamma x)\ol{\mc Z \psi(\alpha,\Gamma x)}\right|\ \dGammax\nonumber \\
&	\leq \left(\int_{ \Gamma \backslash \mathscr G}	\left|\mc Z\varphi(\alpha,\Gamma x)\right|^2\dGammax \right)^{1/2}\left( \int_{ \Gamma \backslash \mathscr G}	\left|\mc Z\psi(\alpha,\Gamma x)\right|^2\ \dGammax\right)^{1/2}\nonumber\\
&=([\varphi, \varphi](\alpha))^{1/2}([\psi,\psi](\alpha))^{1/2},  
\end{align}
we get  $[\varphi,\psi](\alpha)=0$ on a.e. $\widehat \Gamma \backslash \Omega_{\varphi}.$ Thus, we have $[\varphi,\psi](\alpha)=0$ for  a.e. $\alpha \in \widehat \Gamma.$ 

The moreover part  follows from  the same argument as above by replacing the Zak transform $\mc Z$ for the pair $(\G, \Gamma)$ with the fiberization $\mathscr T$ for the pair $(\mc G, \Lambda)$ 
\end{proof}
The part (ii) in Corollary \ref{C:DualMain}  motivates to elaborate more regarding the symmetry of $\varphi$ and $\psi.$ For part (i), having same nature a  counter example is provided in Example \ref{ex:discrete-dual2}.
 We provide various   necessary and  sufficient conditions on the $\Gamma$-TG systems   to become   orthogonal pairs. It is a generalization of the result \cite[Corrollary 2.7]{weber2004orthogonal} for locally compact group.
 \begin{thm}\label{T:orthogonalsupplementary} 	Let  $\varphi$ and $\psi$ be two functions in $L^2(\mathscr G)$   such that the     $\Gamma$-TG   systems $\mc E^{\Gamma}(\varphi)$ and $\mc E^{\Gamma}(\psi)$ are  Bessel.  Then the following are true: 
 	\begin{enumerate}
 		\item [(i)]  Assume   $[\varphi,  \varphi](\alpha)[\psi,  \psi](\alpha)=0$ for a.e. $\alpha \in \widehat \Gamma.$  Then 	$[ \varphi, \psi](\alpha)=0$ a.e. $\alpha,$  and hence $\mc E^{\Gamma}(\psi)$ is a  $\mc S^{\Gamma}(\varphi)$-subspace orthogonal to $\mc E^{\Gamma}(\varphi).$
 		\item [(ii)]  If   $\mc E^{\Gamma}(\psi)$ is an $\mc S^{\Gamma}(\varphi)$-subspace orthogonal to $\mc E^{\Gamma}(\varphi),$  then  $\mc E^{\Gamma}(\tilde {\psi})$ is also so for all   $\tilde {\psi} \in \mc S^\Gamma(\psi).$ 
 		\item[(iii)]	 If   $\mc S^{\Gamma}(\psi)=\mc S^{\Gamma}(\varphi)$  and $\mc E^{\Gamma}(\psi)$ is an  $\mc S^{\Gamma}(\varphi)$-subspace orthogonal to $\mc E^{\Gamma}(\varphi),$  then $\mc E^{\Gamma}(\varphi)$ is also an $\mc S^{\Gamma}(\varphi)$-subspace orthogonal to $\mc E^{\Gamma}(\psi),$ and hence $\mc E^{\Gamma}(\psi)$ and $\mc E^{\Gamma}(\varphi)$ are orthogonal pair. 
 		\item[(iv)] Assume that   $\mc S^{\Gamma}(\psi)=\mc S^{\Gamma}(\varphi).$ Then      $\mc E^{\Gamma}(\psi)$ and $\mc E^{\Gamma}(\varphi)$ are orthogonal pair  if and only if   for a.e. $\alpha \in \widehat \Gamma,$ 
 		$$[\varphi, \varphi](\alpha)[ \psi, \psi](\alpha)=0.$$  
 		\item [(v)]  If the functions $\varphi$ and $\psi$ satisfy $(\supp  \mc Z \varphi) \cap (\supp  \mc Z \psi)=0$ a.e., then $\mc E^{\Gamma}(\psi)$ is an  $\mc S^{\Gamma}(\varphi)$-subspace orthogonal to $\mc E^{\Gamma}(\varphi),$ where  $\supp  \mc Z \varphi$ denotes the support of $\mc Z \varphi$ by considering the map $\mc Z \varphi: \widehat \Gamma \rightarrow L^2 (\Gamma \backslash \G).$ 
 	\end{enumerate}
 \end{thm}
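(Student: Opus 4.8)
The plan is to read off all five items from Corollary \ref{C:DualMain}, the pointwise Cauchy--Schwarz bound for the bracket recorded in (\ref{eq:cauchyschwartzbracket}), and the fibre-wise decomposition of Proposition \ref{P:decomposition}; no new machinery is needed. Two facts will be used throughout. First, $[\varphi,\varphi](\alpha)=\int_{\Gamma\backslash\G}|\mc Z\varphi(\alpha,\Gamma x)|^2\,\dGammax=\|\mc Z\varphi(\alpha,\cdot)\|^2_{L^2(\Gamma\backslash\G)}$, so that $\Omega_\varphi=\{\alpha:[\varphi,\varphi](\alpha)\neq 0\}$ coincides, up to a null set, with $\supp\mc Z\varphi$ (and similarly for $\psi$). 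Second, by Corollary \ref{C:DualMain}(ii), $\mc E^\Gamma(\psi)$ is $\mc S^\Gamma(\varphi)$-subspace orthogonal to $\mc E^\Gamma(\varphi)$ precisely when $[\varphi,\psi](\alpha)=0$ for a.e.\ $\alpha\in\Omega_\varphi$, and in that case $[\varphi,\psi](\alpha)=0$ for a.e.\ $\alpha\in\widehat\Gamma$. With these in hand, (i) is immediate from the estimate $|[\varphi,\psi](\alpha)|\le([\varphi,\varphi](\alpha))^{1/2}([\psi,\psi](\alpha))^{1/2}$ of (\ref{eq:cauchyschwartzbracket}): wherever $[\varphi,\varphi](\alpha)[\psi,\psi](\alpha)=0$ the right-hand side vanishes, so $[\varphi,\psi]\equiv 0$ a.e., and Corollary \ref{C:DualMain}(ii) applies. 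Part (v) then reduces to (i), since the hypothesis $(\supp\mc Z\varphi)\cap(\supp\mc Z\psi)=0$ a.e.\ is exactly $\Omega_\varphi\cap\Omega_\psi=\emptyset$ a.e., i.e.\ $[\varphi,\varphi][\psi,\psi]=0$ a.e.

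For (ii) and (iii) I would start, in both cases, from $[\varphi,\psi]\equiv 0$ a.e., which Corollary \ref{C:DualMain}(ii) supplies from the orthogonality hypothesis. For (ii), given $\tilde\psi\in\mc S^\Gamma(\psi)$, Proposition \ref{P:decomposition} writes $\mc Z\tilde\psi(\alpha,\Gamma x)=\mf m(\alpha)\,\mc Z\psi(\alpha,\Gamma x)$ a.e.\ for a suitable $\mf m$, whence $[\varphi,\tilde\psi](\alpha)=\overline{\mf m(\alpha)}\,[\varphi,\psi](\alpha)=0$ a.e.; Corollary \ref{C:DualMain}(ii), applied to the Bessel pair $\mc E^\Gamma(\varphi),\mc E^\Gamma(\tilde\psi)$, then gives that $\mc E^\Gamma(\tilde\psi)$ is $\mc S^\Gamma(\varphi)$-subspace orthogonal to $\mc E^\Gamma(\varphi)$. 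For (iii), the relation $[\varphi,\psi]\equiv 0$ is symmetric, $[\psi,\varphi]=\overline{[\varphi,\psi]}\equiv 0$ a.e., so Corollary \ref{C:DualMain}(ii) with the roles of $\varphi$ and $\psi$ interchanged shows $\mc E^\Gamma(\varphi)$ is $\mc S^\Gamma(\psi)$-subspace orthogonal to $\mc E^\Gamma(\psi)$; since $\mc S^\Gamma(\psi)=\mc S^\Gamma(\varphi)$ is the common closed span of the two systems, this is by definition the assertion that $\mc E^\Gamma(\psi)$ and $\mc E^\Gamma(\varphi)$ form an orthogonal pair.

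It remains to treat (iv). The backward direction is (i) followed by (iii). For the forward direction --- the one step that calls for a genuine argument rather than bookkeeping --- I would use $\mc S^\Gamma(\psi)=\mc S^\Gamma(\varphi)$ together with Proposition \ref{P:decomposition} to write $\mc Z\psi(\alpha,\Gamma x)=\mf m(\alpha)\,\mc Z\varphi(\alpha,\Gamma x)$ a.e.; the orthogonal-pair hypothesis then forces $0=[\varphi,\psi](\alpha)=\overline{\mf m(\alpha)}\,[\varphi,\varphi](\alpha)$ a.e., hence $\mf m(\alpha)=0$ for a.e.\ $\alpha\in\Omega_\varphi$, and therefore $[\psi,\psi](\alpha)=|\mf m(\alpha)|^2[\varphi,\varphi](\alpha)=0$ for a.e.\ $\alpha\in\Omega_\varphi$; since $[\varphi,\varphi]$ vanishes off $\Omega_\varphi$, we conclude $[\varphi,\varphi](\alpha)[\psi,\psi](\alpha)=0$ a.e. The main obstacle is precisely this propagation of the vanishing of $[\varphi,\psi]$ to the vanishing of the product $[\varphi,\varphi][\psi,\psi]$, and it is handled cleanly by the fibre decomposition of Proposition \ref{P:decomposition}; everything else is routine manipulation of Corollary \ref{C:DualMain} and the bracket's Cauchy--Schwarz inequality.
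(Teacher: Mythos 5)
Your proof is correct and follows essentially the same route as the paper: the Cauchy--Schwarz bound (\ref{eq:cauchyschwartzbracket}) for (i) and (v), Proposition \ref{P:decomposition} plus Corollary \ref{C:DualMain} for (ii), and the symmetry of the vanishing bracket for (iii). The only (cosmetic) difference is in the forward direction of (iv): the paper substitutes $f=\psi$ directly into the orthogonality relation to read off $[\psi,\psi](\alpha)\,\mc Z\varphi(\alpha,\Gamma x)=0$, whereas you reach the same conclusion by writing $\mc Z\psi=\mf m\,\mc Z\varphi$ via Proposition \ref{P:decomposition}; both arguments are valid and equivalent in substance.
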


 \begin{proof} (i) The expression $ [ \varphi,  \psi](\alpha)=0$  follows by observing the estimate from (\ref{eq:cauchyschwartzbracket})  for a.e. $\alpha \in \widehat \Gamma,$ 
 	\begin{align*} 
 	|[\varphi,\psi](\alpha)| 	
 	\leq	\int_{ \Gamma \backslash \mathscr G}\left|\mc Z\varphi(\alpha,\Gamma x)\ol{\mc Z \psi(\alpha,\Gamma x)}\right|\ \dGammax 
 	\leq ([\varphi, \varphi](\alpha))^{1/2}([\psi,\psi](\alpha))^{1/2}
 	\end{align*}
 	using Cauchy-Schwarz inequality. From  Corollary \ref{C:DualMain},   $\mc E^{\Gamma}(\psi)$ is  an  $\mc S^{\Gamma}(\varphi)$-subspace orthogonal to $\mc E^{\Gamma}(\varphi).$

 	\noindent(ii) For  $\tilde \psi\in  S^\Gamma(\psi),$ we  can write $\mc Z\tilde \psi(\alpha,\Gamma x)=\mf m(\alpha)\mc Z\psi(\alpha,
 	\Gamma x)$  for a.e.$ \ {\alpha \in \widehat \Gamma} ,$  and $\Gamma x \in \Gamma \backslash \G$  due to Proposition \ref{P:decomposition}, where $\mf m$ is a member of the  weighted space $L^2 (\widehat \Gamma, [  \psi,  \psi]).$ Then,  we get 
 	\ $[\varphi,\tilde\psi](\alpha)=\ol{\mf m(\alpha)}[\varphi, \psi](\alpha),$ and hence the result follows  since the system     $\mc E^{\Gamma}(\psi)$ is  an  $\mc S^{\Gamma}(\varphi)$-subspace orthogonal to $\mc E^{\Gamma}(\varphi),$ equivalently, $[\varphi,\psi](\alpha)=0$ a.e. $\alpha \in \widehat \Gamma$ by Corollary \ref{C:DualMain}.
 	
 	\noindent(iii) This follows easily by  Corollary \ref{C:DualMain}.
 	
 	\noindent (iv)  It is enough to show   the orthogonality of  the Bessel  pairs $\mc E^{\Gamma}(\varphi)$  and $\mc E^{\Gamma}(\psi)$  implies the expression $[ \varphi,  \varphi](\alpha)[ \psi,  \psi](\alpha)$ become zero for a.e. $\alpha$ in view of Corollary \ref{C:DualMain} and part (i).   For this, we proceed similar to the part (i) of Corollary \ref{C:DualMain} by considering $
 	\sum_{ \gamma \in \Gamma}\langle f, L_\gamma\psi\rangle L_\gamma\varphi =0$ for all $f \in  \mc S^\Gamma(\varphi)=\mc S^ \Gamma(\psi).$ Then we get $[ \psi, \psi] (\alpha) \mc Z \varphi (\alpha, \Gamma x)=0$ by choosing $f=\psi$ and hence, we obtain either   $[ \psi, \psi] (\alpha) =0$ or $[ \varphi,  \varphi] (\alpha) =0$ for a.e. $\alpha \in \widehat \Gamma.$ This proves the result.
 	
 	\noindent (v) Observe that the set $\{\alpha \in \widehat \Gamma: \{\mc Z \varphi (\alpha, \Gamma x)\}_{\Gamma x \in \Gamma\backslash \G} \neq 0\}$ is same as the set $\{\alpha \in \widehat \Gamma: \|\{\mc Z \varphi (\alpha, \Gamma x)\}_{\Gamma x \in \Gamma\backslash \G} \|\neq 0\}$ which is further equal to $\{\alpha \in \widehat \Gamma: [ \varphi,  \varphi] (\alpha) \neq 0\}.$ Therefore, the support of  $\mc Z \varphi$  is same as the support of $[ \varphi,  \varphi].$  Hence    $(\mbox{supp} \ \mc Z \varphi) \cap (\mbox{supp} \ \mc Z \psi)=0$ a.e.  implies $(\mbox{supp} \ [ \varphi,  \varphi]) \cap (\mbox{supp} \ [ \psi,  \psi])=0$ a.e.  Thus, we get $[ \varphi,  \varphi](\alpha)[ \psi,  \psi](\alpha)=0$ for a.e. $\alpha \in \widehat \Gamma.$  Now from part (i) we get result. 		 
 \end{proof}
Now we provide some examples to illustrate our results.
\begin{exa} \label{ex:discrete-dual2}
	For a second countable   LCA group $\mc G$ having uniform lattice   $\Lambda,$ we can write $\widehat{\mc G}=\Omega \oplus  \Lambda^{\perp}$ due to   the Pontryagin duality theorem, where $\Omega$ is a \textit{fundamental domain}.    Then, the system  $\{\Omega+\lambda:\lambda \in \Lambda^{\perp}\}$
	is a measurable partition of $\widehat {\mc G}.$ Note that $\Omega$ is a Borel section of $\widehat {\mc G}/\Lambda^\perp.$ 
	\begin{enumerate}
		\item [(i)] \textbf{Subspace dual}:  	Let  $ \eta_1, \eta_2\in L^2(\mc G)$ be such that   $\widehat \eta_1=
		\chi_{A \Omega}$ and $\widehat \eta_2=
		\chi_{\Omega},$ where  $A$ is   an automorphism   on  $\widehat {\mc G}$  such that $A \Omega \subsetneq \Omega.$  Then for $i=1, 2,$      $\mc E^{\Lambda}(\eta_i)$ is   an $\mc S^{\Lambda}(\eta_i)$-subspace  frame, and  $\mc E^{\Lambda}(\eta_2)$ is an $\mc S^\Lambda(\eta_1)$-subspace dual to  $\mc E^{\Lambda}(\eta_1)$ since  for a.e. $\xi \in \Omega,$ $\mathscr T \eta_i (\xi)=\{\widehat{\eta_i}(\xi+\lambda)\}_{\lambda \in \Lambda^\perp} ,$ $ [\eta_1,  \eta_1]_{\mathscr T}(\xi)= 
		\sum_{\lambda \in \Lambda ^{\perp}}|\widehat \eta_1(\xi + \lambda)|^2= \chi_{A\Omega} (\xi), $ $[ \eta_2,  \eta_2]_{\mathscr T}(\xi)= \sum_{k \in \Lambda ^{\perp}}|\widehat \eta_2 (\xi + \lambda)|^2= \chi_{\Omega} (\xi), $  and  $[ \eta_2,  \eta_1]_{\mathscr T}(\xi)=\sum_{\lambda \in \Lambda ^{\perp}} \widehat \eta_2(\xi + \lambda) \overline{\widehat{\eta}_1 (\xi + \lambda)}=\chi_{A\Omega} (\xi)$ by applying Corollary \ref{C:DualMain}. Note that $\mc E^{\Lambda}(\eta_1)$ is not  an $\mc S^\Lambda(\eta_2)$-subspace dual to $\mc E^{\Lambda}(\eta_2).$ 
		
		\item  [(ii)] \textbf{Orthogonal Bessel pair}:      
		Let   $ \eta_1, \eta_2\in L^2(\mc G)$ be such that   $\widehat \eta_1=
		\chi_{\Omega_1}$ and $\widehat \eta_2=
		\chi_{\Omega_2},$ where $\mu_{\widehat{\mc G}} (\Omega_1 \cap \Omega_2)=0,$ and for each $i=1, 2,$ the system  $\{\Omega_i+\lambda:\lambda \in \Lambda^{\perp}\}$
		is a measurable partition of $\widehat {\mc G}.$  Then for each $i=1, 2,$      $\mc E^{\Lambda}(\eta_i)$ is   an $\mc S^{\Lambda}(\eta_i)$-subspace frame  since   $ [\eta_i,  \eta_i]_{\mathscr T}(\xi)= 1$ for a.e. $\xi \in \Omega_i.$ Further note that  $\mc E^{\Lambda}(\eta_2)$  is  an $\mc S^{\Lambda}(\eta_1)$-subspace orthogonal to $\mc E^{\Lambda}(\eta_1)$  since for a.e. $\xi \in \Omega_1,$  
		$[ \eta_2,  \eta_1]_{\mathscr T}(\xi)=\sum_{\lambda \in \Lambda ^{\perp}} \widehat \eta_2(\xi + \lambda) \overline{\widehat{\eta}_1 (\xi + \lambda)}=0.$
		Similarly,    $\mc E^{\Lambda}(\eta_1)$  is  also an $\mc S^{\Lambda}(\eta_2)$-subspace orthogonal to $\mc E^{\Lambda}(\eta_2).$ 
	\end{enumerate}
\end{exa}

\begin{exa}  First we recall Example \ref{ex:discrete-dual2}  and also fix an automorphism $A$ on  $\widehat {\mc G}$  such that $A \Omega \subsetneq \Omega.$  Let   $ \mc A=\{\eta_1, \eta_2\}\subset L^2(\mc G)$ be such that   $\widehat \eta_1=
	\chi_{A \Omega}$ and $\widehat \eta_2=
	\chi_{\Omega \backslash  A\Omega}.$ Then the associated Gramian matrix  $G_{\mc A}(\xi) $  is $\begin{bmatrix} 1&0\\
	0&0
	\end{bmatrix}$
	for $\xi \in A \Omega,$  {and} 	$ \begin{bmatrix}
	0&0\\
	0&1
	\end{bmatrix}$ for $\xi \in \Omega \backslash  A\Omega$ by noting 
	$\sum_{\lambda \in \Lambda ^{\perp}}|\widehat \eta_1(\xi + \lambda)|^2= \chi_{A\Omega} (\xi),$ $  \sum_{k \in \Lambda ^{\perp}}|\widehat \eta_2 (\xi + \lambda)|^2= \chi_{\Omega \backslash  A\Omega} (\xi),$   and $ \sum_{\lambda \in \Lambda ^{\perp}} \widehat \eta_1(\xi + \lambda) \overline{\widehat{\eta}_2 (\xi + \lambda)}=0.$ 
	Further, let    $\mc A'=\{ \zeta_1, \zeta_2\}\in L^2(\mc G)$ be such that   $\widehat \zeta_1=
	\chi_{ \Omega\backslash A\Omega}$ and $\widehat \zeta_2=
	\chi_{A\Omega }$ 	for a.e. $\xi \in \Omega.$ Then the associated Gramian matrix  $G_{\mc A'}(\xi) $  is
	$ \begin{bmatrix}
	0&0\\
	0&1
	\end{bmatrix}$ 
	for $\xi \in A \Omega,$  {and} 
	$\begin{bmatrix} 1&0\\
	0&0
	\end{bmatrix}$ for $\xi \in \Omega \backslash  A\Omega.$
	Since  $G_{\mc A}(\xi)G_{\mc A'}(\xi)=0$ for a.e.  $\xi \in \Omega,$  the systems $\mc E^\Lambda(\mc A)$ and $\mc E^\Lambda(\mc A')$ are orthogonal pair   by Theorem \ref{T:Orthogonal}. From the Corollary \ref{Cor: Multi-dual}, note that $\mc E^\Lambda(\mc A)$ and $\mc E^\Lambda(\mc A')$ are $\mc S^\Lambda(\mc A)$ and $\mc S^\Lambda(\mc A')$-subspace dual to itself, respectively.  
\end{exa}
\begin{exa}\label{ex:discrete-dual1}  Let $\psi \in L^2 (\G)$ be such that $\mc E^{\Gamma}(\psi)$ is an $\mc S^{\Gamma}(\psi)$-subspace frame. Assume  that  $\varphi,\tilde{\varphi}\in L^2( \mathscr G)$  which are defined  in terms of the  Zak transform as follows,  for a.e. $\alpha \in \widehat \Gamma$
	$$
	(\mc Z\varphi)(\alpha, \Gamma x)=\mf m(\alpha) (\mc Z\psi)(\alpha, \Gamma x)\ 
	\mbox{and}\ (\mc Z\tilde\varphi)(\alpha, \Gamma x)=\tilde{\mf m}(\alpha)(\mc Z\psi)(\alpha, \Gamma x) \ \mbox{for all} \ \Gamma x\in \Gamma \backslash \mathscr G,
	$$
	where $\mf m,  \tilde{\mf m} \in L^2 \left(\widehat \Gamma, [ \psi, \psi]\right)$  are bounded functions.
	Then $\mc E^{\Gamma}(\varphi)$  is  an $\mc S^{\Gamma}(\varphi)$-subspace frame as $\mc S^{\Gamma}(\varphi)=\mc S^{\Gamma}(\psi)$ and $[ \varphi,  \varphi] (\alpha)=|\mf m(\alpha)|^2 [ \psi,  \psi](\alpha)$ for a.e.  $\alpha \in \widehat{\Gamma}.$ Similarly,     $\mc E^{\Gamma}(\tilde{\varphi})$ is also an   $\mc S^{\Gamma}(\tilde\varphi)$-subspace frame.  
	Also note that $[\varphi,\tilde \varphi](\alpha)=\mf m(\alpha) \ol{\tilde{\mf m}(\alpha)} [\psi,\psi](\alpha) $ for a.e.  $\alpha \in \widehat{\Gamma}.$  By the Corollary \ref{C:DualMain},  $\mc E^{\Gamma}(\varphi)$ is an $ \mc S^{\Gamma}(\tilde\varphi)$-subspace dual to $\mc E^{\Gamma}(\tilde{\varphi})$ 
	if and only if
	$$\mf m(\alpha)\ol{\tilde{ \mf m}(\alpha)}=\frac{1}{[\psi, \psi](\alpha)}\mbox{ on}\ \left\{\alpha\in \widehat \Gamma:[\psi, \psi](\alpha)\neq 0\right\}.
	$$
	In this case,  both $\mc E^{\Gamma}(\varphi)$ and   $\mc E^{\Gamma}(\tilde{\varphi})$  are dual frames to each other. The condition  gives various choices of subspace dual frames. 
	Also, $\mc E^{\Gamma}(\varphi)$ is an $ \mc S^{\Gamma}(\tilde\varphi)$-subspace orthogonal  to $\mc E^{\Gamma}(\tilde{\varphi})$ 
	if and only if $\mf m(\alpha)\ol{\tilde{ \mf m}(\alpha)}=0,$  a.e. $\alpha \in \widehat \Gamma.$  In this case   $\mc E^{\Gamma}(\varphi)$ and   $\mc E^{\Gamma}(\tilde{\varphi})$  are orthogonal pair.   
\end{exa}
The Example \ref{ex:discrete-dual1} also concludes that
 there is  a unique  $ \varphi \in \mc S^{\Gamma}(\psi)$  such that $\mc E^{\Gamma}( \varphi)$ is an $\mc S^{\Gamma}(\psi)$-subspace dual frame to $\mc E^{\Gamma}(\psi),$ where  
 	${\varphi}=\mc S^{\dagger}\psi$ (pseudo inverse).
 Since   $ 	[\psi,  \varphi](\alpha)=\ol{{\mf m}(\alpha)}[\psi, \psi](\alpha)=1 \ \mbox{a.e. on}\ \Omega_{ \psi},$
 	the function ${\mf m}$ is unique except on the set $\{\alpha \in \widehat \Gamma:[ \psi, \psi](\alpha)=0\}$ as various choices of ${\mf m}$ is possible on this set. But note that  whatever choice we can do for ${\mf m}$  we always have  $\mc Z\varphi (\alpha, \Gamma x)={\mf m} (\alpha) \mc Z\psi (\alpha, \Gamma x)=0$  on  $\left\{\alpha \in \widehat \Gamma:[ \psi,  \psi](\alpha)=0\right\}.$ So $\mc Z{\varphi}$ is uniquely defined, and  hence $ \varphi$ is  so.

The next result  discusses  an existence of $\mc S^{\Gamma}(\varphi)$-subspace dual  to  a frame $\mc E^{\Gamma}(\varphi)$ and provide a condition to get unique dual (upto a scalar multiplication). This generalizes a result described  for $L^2(\mathbb R^n)$ \cite[Theorem 4.3]{christensen2004oblique}.

\begin{thm} \label{Th:dual existence}	Let  $\varphi, \psi \in L^2(\mathscr G)$  be such that   $\mc E^{\Gamma}(\varphi)$ and $\mc E^{\Gamma}(\psi)$ are $\mc S^{\Gamma}(\varphi)$ and $\mc S^{\Gamma}(\psi)$-subspace frames, respectively. If  for some  positive constant $C,$ the expression  
	$\left|[ \varphi,  \psi](\alpha)\right| \geq C$   holds  for a.e. $\alpha \in \Omega_{ \varphi}$ (defined in Corollary \ref{C:DualMain}), 
	then	there exists a   $\tilde\psi\in \mc S^{\Gamma}(\psi)$ such that  $\mc E^{\Gamma}(\tilde\psi)$  is an    $\mc S^{\Gamma}(\varphi)$-subspace dual to $\mc E^{\Gamma}(\varphi).$   

Moreover,   $\tilde\psi\in \mc S^{\Gamma}(\psi)$ is unique if and only if $\Omega_{\varphi}=\Omega_{ \psi},$ a.e. 
	In particular,  the   $\tilde\psi\in \mc S^{\Gamma}(\psi)$  is unique and satisfies the following relation for  \mbox{a.e.}  $\alpha \in \Omega_{\varphi}$:
	$$
	[ \varphi, \psi](\alpha)	\, (\mc Z \tilde{\psi})(\alpha, \Gamma x)=(\mc Z \psi)(\alpha, \Gamma x) \, \chi_{\Omega_{ \varphi}} (\alpha)  \ \mbox{for all} \ \Gamma x \in \Gamma \backslash \mathscr G.
	$$
\end{thm}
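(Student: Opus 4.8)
The plan is to reduce everything to the singly-generated duality criterion of Corollary \ref{C:DualMain}(i), which says that $\mc E^{\Gamma}(\tilde\psi)$ is an $\mc S^{\Gamma}(\varphi)$-subspace dual to $\mc E^{\Gamma}(\varphi)$ if and only if $[\varphi,\tilde\psi](\alpha)=1$ for a.e.\ $\alpha\in\Omega_{\varphi}$. So the whole problem is to produce, out of $\psi$, an element $\tilde\psi\in\mc S^{\Gamma}(\psi)$ whose bracket against $\varphi$ equals $1$ on $\Omega_{\varphi}$. By Proposition \ref{P:decomposition}, every $\tilde\psi\in\mc S^{\Gamma}(\psi)$ has the form $(\mc Z\tilde\psi)(\alpha,\Gamma x)=\mf m(\alpha)(\mc Z\psi)(\alpha,\Gamma x)$ for some $\mf m\in L^2(\widehat\Gamma,[\psi,\psi])$, and then $[\varphi,\tilde\psi](\alpha)=\ol{\mf m(\alpha)}\,[\varphi,\psi](\alpha)$. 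Thus I would \emph{define} $\mf m$ by
\[
\mf m(\alpha):=\begin{cases}\dfrac{1}{\ol{[\varphi,\psi](\alpha)}}&\alpha\in\Omega_{\varphi},\\[1ex]0&\alpha\notin\Omega_{\varphi},\end{cases}
\]
which is legitimate since $|[\varphi,\psi](\alpha)|\ge C$ on $\Omega_{\varphi}$ forces the reciprocal to be bounded by $1/C$, and a bounded function lies in $L^2(\widehat\Gamma,[\psi,\psi])$ because $\int_{\widehat\Gamma}[\psi,\psi]<\infty$ by the Bessel property (Lemma \ref{L:matrixelement}). This defines $\tilde\psi$ via $\mc Z\tilde\psi=\mf m\,\mc Z\psi$, puts $\tilde\psi\in\mc S^{\Gamma}(\psi)$, and gives $[\varphi,\tilde\psi](\alpha)=\ol{\mf m(\alpha)}[\varphi,\psi](\alpha)=1$ on $\Omega_{\varphi}$, so Corollary \ref{C:DualMain}(i) yields the subspace dual. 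One should also note the identity $[\varphi,\psi](\alpha)\,\mc Z\tilde\psi(\alpha,\Gamma x)=\mc Z\psi(\alpha,\Gamma x)\,\chi_{\Omega_{\varphi}}(\alpha)$, which is immediate from $\mf m(\alpha)[\varphi,\psi](\alpha)=\ol{\mf m(\alpha)}^{-1}\cdot\ldots$ — more precisely from multiplying $\mc Z\tilde\psi=\mf m\,\mc Z\psi$ by $[\varphi,\psi]$ and using $[\varphi,\psi]\mf m=\chi_{\Omega_{\varphi}}$ on $\Omega_{\varphi}$ (up to conjugation bookkeeping, which I would handle by noting $|[\varphi,\psi]|^2\mf m=[\varphi,\psi]\chi_{\Omega_{\varphi}}$ and dividing).

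For the uniqueness part, the point is that any admissible generator $\tilde\psi'\in\mc S^{\Gamma}(\psi)$ corresponds to some $\mf m'\in L^2(\widehat\Gamma,[\psi,\psi])$ with $\ol{\mf m'(\alpha)}[\varphi,\psi](\alpha)=1$ on $\Omega_{\varphi}$; on $\Omega_{\varphi}$ this pins down $\mf m'(\alpha)=\mf m(\alpha)$ uniquely since $[\varphi,\psi](\alpha)\ne0$ there (indeed $\Omega_{\varphi}\subseteq\Omega_{\psi}$ always, by the Cauchy--Schwarz estimate \eqref{eq:cauchyschwartzbracket}, so $[\varphi,\psi]\ne0$ on $\Omega_{\varphi}$ is consistent). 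On the complement $\widehat\Gamma\setminus\Omega_{\varphi}$, however, $\mf m'$ is unconstrained \emph{as a function}, but what matters is $\tilde\psi'$, i.e.\ the product $\mf m'\,\mc Z\psi$: on $\widehat\Gamma\setminus\Omega_{\psi}$ one has $\mc Z\psi(\alpha,\cdot)=0$ so $\mf m'$ is irrelevant there, whereas on $\Omega_{\psi}\setminus\Omega_{\varphi}$ the value of $\mf m'$ genuinely changes $\tilde\psi'$. Hence $\tilde\psi$ is unique in $\mc S^{\Gamma}(\psi)$ precisely when $\mu_{\widehat\Gamma}(\Omega_{\psi}\setminus\Omega_{\varphi})=0$, i.e.\ $\Omega_{\varphi}=\Omega_{\psi}$ a.e.\ (using $\Omega_{\varphi}\subseteq\Omega_{\psi}$). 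When this holds, forcing $\mf m'=0$ off $\Omega_{\varphi}$ is automatic because $\mc Z\psi$ vanishes there, giving the displayed formula with $\mc Z\tilde\psi(\alpha,\cdot)$ determined by $[\varphi,\psi](\alpha)\mc Z\tilde\psi(\alpha,\Gamma x)=\mc Z\psi(\alpha,\Gamma x)\chi_{\Omega_{\varphi}}(\alpha)$.

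The main obstacle I anticipate is the uniqueness direction — specifically being careful that "unique $\tilde\psi$" means unique as an element of $\mc S^{\Gamma}(\psi)$ (equivalently, unique $\mc Z\tilde\psi$ in $L^2(\widehat\Gamma\times\Gamma\backslash\mathscr G)$), not unique multiplier $\mf m$, and correctly locating where the freedom in $\mf m$ does and does not affect $\tilde\psi$. This hinges on the a.e.\ inclusion $\Omega_{\varphi}\subseteq\Omega_{\psi}$ (from \eqref{eq:cauchyschwartzbracket}: $|[\varphi,\psi]|^2\le[\varphi,\varphi][\psi,\psi]$, so $[\psi,\psi]=0\Rightarrow[\varphi,\psi]=0$, and the lower bound $|[\varphi,\psi]|\ge C$ on $\Omega_{\varphi}$ then forces $[\psi,\psi]>0$ on $\Omega_{\varphi}$). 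Everything else is the routine multiplier bookkeeping already exemplified in Example \ref{ex:discrete-dual1}; the abelian/fiberization version follows verbatim by replacing $\mc Z$ with $\mathscr T$ and invoking Lemma \ref{L:zak-fib}, as in the "moreover" clauses elsewhere.
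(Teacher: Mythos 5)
Your proposal is correct and follows essentially the same route as the paper: reduce to Corollary \ref{C:DualMain}(i) via the multiplier description of $\mc S^{\Gamma}(\psi)$ from Proposition \ref{P:decomposition}, prescribe $\ol{\mf m}=1/[\varphi,\psi]$ on $\Omega_{\varphi}$ (extended off $\Omega_{\varphi}$), and locate the uniqueness obstruction on $\Omega_{\psi}\setminus\Omega_{\varphi}$, exactly as the paper does. Your write-up is in fact somewhat more explicit than the paper's on the distinction between uniqueness of the multiplier $\mf m$ and uniqueness of $\tilde\psi$; the only loose end is the ``conjugation bookkeeping'' for the displayed identity, which properly comes out as $\ol{[\varphi,\psi](\alpha)}\,\mc Z\tilde{\psi}(\alpha,\Gamma x)=\mc Z\psi(\alpha,\Gamma x)\,\chi_{\Omega_{\varphi}}(\alpha)$ (the paper's display omits the conjugate, and its own proof never verifies that formula).
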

\begin{proof}  Firstly note that for a.e. $\alpha \in \Omega_{\varphi},$ $C \leq |[\varphi, \psi](\alpha)| \leq \sqrt{BB'},$    follows  from the estimate (\ref{eq:cauchyschwartzbracket})
	\noindent  as $\mc E^{\Gamma}(\varphi)$ and $\mc E^{\Gamma}(\psi)$ are Bessel sequences  with  bounds $B$ and $B',$ respectively \cite{iverson2015subspaces}. Further note that any function $\tilde\psi\in \mc S^{\Gamma}(\psi)$ if and only if  for  a.e. $\alpha \in \widehat{\Gamma},$ we have $\mc Z\tilde{\psi}(\alpha, \Gamma x)=\mf m(\alpha)\mc Z \psi(\alpha, \Gamma x)$  for all $\Gamma x \in \Gamma \backslash \mathscr G,$ where  $\mf m\in L^2(\widehat \Gamma, [ \psi,  \psi]),$ from  Proposition \ref{P:decomposition}. From Corollary \ref{C:DualMain}, additionally note that 	$\mc E^{\Gamma}(\tilde \psi)$ is an $\mc S^{\Gamma}(\varphi)$-subspace dual to $\mc E^{\Gamma}(\varphi)$ if and only if   for a.e. $\alpha \in  \Omega_{ \varphi},$  
	$ 	1=[\varphi, { \tilde\psi}](\alpha)=\ol{\mf m(\alpha)}[\varphi,  \psi](\alpha). $
	Hence $\mf m$ is both bounded above and bounded below on $\Omega_{\varphi},$ extending  this to an arbitrary function in $L^2(\widehat \Gamma),$ will produce a function $\tilde \psi\in \mc S^\Gamma(\psi)$ such that  $\mc E^\Gamma(\tilde \psi)$ is an $\mc S^\Gamma(\varphi)$-subspace dual to $\mc E^\Gamma(\varphi).$
	
	\noindent Since   $\left|[ \varphi,  \psi](\alpha)\right| \geq C$ for a.e. $\alpha\in  \Omega_{ \varphi},$  we have     $\left\{\alpha\in \widehat \Gamma:[ \psi,  \psi](\alpha)=0\right\} \subseteq \left\{\alpha\in \widehat \Gamma:[ \varphi,  \varphi](\alpha) =0\right\}.$
	When the equality holds on the  above sets, we get 	 
	$\mc Z\tilde \psi(\alpha)=0$ on  $\left\{\alpha\in \widehat \Gamma:[ \varphi,  \varphi](\alpha)=0\right\}$ always, whatever $\mf m$   is  considered. In this case there exists unique $\tilde \psi,$ which fulfils the requirements.  Otherwise for the case  $\left\{\alpha\in \widehat \Gamma:[ \psi,  \psi](\alpha)=0\}\subsetneq \{\alpha\in \widehat \Gamma:[ \varphi,  \varphi](\alpha)(\alpha)=0\right\},$ various choices of  will leads to various $\mc Z\tilde \psi(\alpha)$  as it is non-zero,  by considering  $\mf m$ on $\left\{\alpha\in \widehat \Gamma:[ \varphi, \varphi](\alpha) (\alpha)=0\right\} \backslash \left\{\alpha\in \widehat \Gamma:[ \psi,  \psi](\alpha)=0\right\}.$  Thus the  various $\tilde \psi$ is possible.	Hence the result follows. 
\end{proof}


In this section we have discussed  $\mc S^\Gamma (\mc A)$-subspace duals of a frame/Bessel family $\mc E^\Gamma (\mc A)$ in $L^2 (\G),$ and realized that we can   obtain various duals of a  frame/Bessel family (see Example  \ref{ex:discrete-dual1} ). Theorem \ref{Th:dual existence} motivates to discuss about unique dual. It is well known that the unique dual can be   obtained when  the  frame/Bessel family $\mc E^\Gamma (\mc A)$ becomes Riesz basis for $L^2 (\G),$ known as \textit{dual basis} or \textit{biorthogonal basis}. We refer  \cite{christensen2016introduction, walnut2013introduction} for more details on Riesz basis and biorthogonal basis.   Next we study     biorthogonal systems and Riesz basis generated by translations in    $L^2 (\G).$  

 \section{Translation generated Biorthogonal system and subspace Riesz basis }\label{s:biortho}  For two non-zero functions $\varphi,\psi\in L^2(\G),$ the $\Gamma$-TG systems   $\mc E^{\Gamma}(\varphi)$ and  $\mc E^{\Gamma}(\psi)$ are said to be \textit{biorthogonal} if $\langle L_\gamma \varphi, L_{\gamma'}\psi\rangle=\delta_{\gamma,\gamma'}$  for all $\gamma,\gamma'\in \Gamma.$ Throughout the section, we assume $\Gamma$ to be discrete abelian subgroup of $\G.$ The following result characterizes biorthogonal systems in terms of the Zak transform and describes when a  translation generated system will be linearly independent.   $\mc E^{\Gamma}(\varphi)$  is \textit{linearly independent} if  the expression $\sum_{\gamma \in  \Gamma} c_\gamma L_\gamma \varphi = 0$ for some   $\{c_\gamma\}_{\gamma\in \Gamma} \in \ell^2 (\Gamma)$  implies $c_\gamma=0$ for all $\gamma.$

\begin{thm}\label{T:Bio1} 
	 For  non-zero functions $\varphi,\psi\in L^2(\G),$  the $\Gamma$-TG systems $\mc E^{\Gamma}(\varphi)$ and  $\mc E^{\Gamma}(\psi)$ are     biorthogonal if and only if  for a.e. $\alpha \in \widehat \Gamma,$ 
	$[ \varphi,  \psi](\alpha)=1.$  In case of either  one holds, then 
\begin{enumerate}
\item[(i)]  The  systems $\mc E^{\Gamma}(\varphi)$ and  $\mc E^{\Gamma}(\psi)$ are linearly independent.
\item[(ii)]  $\mc E^{\Gamma}(\psi)$ is an $\mc S^{\Gamma}(\varphi)$-subspace dual to   $\mc E^{\Gamma}(\varphi)$ for compactly supported $\varphi$ and $\psi.$  
\end{enumerate}  
\end{thm}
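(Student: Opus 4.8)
The plan is to prove the characterization first and then read off (i) and (ii). For the equivalence, observe that left translation is unitary and a homomorphism, so $\langle L_\gamma\varphi, L_{\gamma'}\psi\rangle = \langle L_{(\gamma')^{-1}\gamma}\varphi,\psi\rangle$; as $(\gamma,\gamma')$ runs over $\Gamma\times\Gamma$ the element $(\gamma')^{-1}\gamma$ runs over all of $\Gamma$, and $\delta_{\gamma,\gamma'}=\delta_{(\gamma')^{-1}\gamma,e}$, so $\mc E^{\Gamma}(\varphi)$ and $\mc E^{\Gamma}(\psi)$ are biorthogonal iff $\langle L_\eta\varphi,\psi\rangle=\delta_{\eta,e}$ for every $\eta\in\Gamma$, with $e$ the identity of $\Gamma$. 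Applying unitarity of $\mc Z$ and the intertwining $\mc Z(L_\eta\varphi)(\alpha,\Gamma x)=\alpha(\eta^{-1})\mc Z\varphi(\alpha,\Gamma x)$ (established in the proof of Lemma \ref{L:matrixelement}, and valid on all of $L^2(\G)$ by density), together with Fubini, gives $\langle L_\eta\varphi,\psi\rangle=\int_{\widehat\Gamma}[\varphi,\psi](\alpha)\overline{\alpha(\eta)}\,d\mu_{\widehat\Gamma}(\alpha)=\mc F[\varphi,\psi](\eta)$, exactly as in the computation preceding Lemma \ref{L:zak-fib}. Since $\Gamma$ is discrete, $\widehat\Gamma$ is compact, so $[\varphi,\psi]$ and the constant $\mathbf 1$ both lie in $L^1(\widehat\Gamma)$, and orthonormality of $\{\widehat\gamma\}_{\gamma\in\Gamma}$ in $L^2(\widehat\Gamma)$ (with $\widehat e\equiv 1$) yields $\mc F\mathbf 1(\eta)=\delta_{\eta,e}$. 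Hence biorthogonality $\iff \mc F[\varphi,\psi]=\mc F\mathbf 1$ on $\Gamma$ $\iff [\varphi,\psi](\alpha)=1$ a.e.\ $\alpha\in\widehat\Gamma$, by injectivity of $\mc F\colon L^1(\widehat\Gamma)\to C_0(\Gamma)$.

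Part (i) is then immediate: assuming biorthogonality, if $\{c_\gamma\}_{\gamma\in\Gamma}\in\ell^2(\Gamma)$ satisfies $\sum_{\gamma}c_\gamma L_\gamma\varphi=0$, pairing with $L_{\gamma'}\psi$ (the inner product is continuous, and if the series is only weakly convergent the identity holds by definition) gives $0=\sum_\gamma c_\gamma\langle L_\gamma\varphi,L_{\gamma'}\psi\rangle=c_{\gamma'}$, so $\mc E^{\Gamma}(\varphi)$ is linearly independent; pairing $\sum_\gamma c_\gamma L_\gamma\psi=0$ with $L_{\gamma'}\varphi$ and using $\langle L_\gamma\psi,L_{\gamma'}\varphi\rangle=\overline{\langle L_{\gamma'}\varphi,L_\gamma\psi\rangle}=\delta_{\gamma,\gamma'}$ gives the same for $\mc E^{\Gamma}(\psi)$.

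For (ii) the extra point is that compact support of $\varphi$ together with discreteness of $\Gamma$ forces $\mc E^{\Gamma}(\varphi)$ to be a Bessel system (a standard fact): with $K=\supp\varphi$, pick an open $V\ni e$ with $(V^{-1}V)\cap\Gamma=\{e\}$; then each left $V$-translate meets $\Gamma$ in at most one point, so the overlap number $N:=\sup_{y\in\G}\#(\Gamma\cap yK^{-1})$ is bounded by the number of $V$-translates covering the compact set $K^{-1}$, hence finite, and a Cauchy--Schwarz bound on $\langle f,L_\gamma\varphi\rangle=\int_{\gamma K}f\,\overline{L_\gamma\varphi}$ using left-invariance of Haar measure and $\sum_\gamma\chi_{\gamma K}\le N$ gives $\sum_\gamma|\langle f,L_\gamma\varphi\rangle|^2\le N\|\varphi\|^2\|f\|^2$; likewise for $\psi$. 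With both $\Gamma$-TG systems Bessel, Corollary \ref{C:DualMain} applies, and since biorthogonality gives $[\varphi,\psi](\alpha)=1$ a.e.\ on $\widehat\Gamma$, in particular a.e.\ on $\Omega_\varphi$, part (i) of that corollary yields that $\mc E^{\Gamma}(\psi)$ is an $\mc S^{\Gamma}(\varphi)$-subspace dual to $\mc E^{\Gamma}(\varphi)$.

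The only delicate step is the characterization: since biorthogonality does not by itself make $\mc E^{\Gamma}(\varphi)$ Bessel, the ``moreover'' clause of Lemma \ref{L:matrixelement} is unavailable, and one must argue at the $L^1(\widehat\Gamma)$ level and invoke injectivity of the Fourier transform there — this is why the computation is phrased through $\mc F[\varphi,\psi]$ rather than through the discrete-time Fourier transform of $\mc M_\varphi\psi$. The Bessel fact used in (ii) is routine and can simply be cited; the remaining manipulations are bookkeeping with the Zak transform.
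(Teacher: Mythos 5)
Your proof of the main equivalence and of part (i) follows essentially the same route as the paper: reduce biorthogonality to $\langle L_\eta\varphi,\psi\rangle=\delta_{\eta,e}$, pass through the Zak transform and the intertwining relation to identify this inner product with the Fourier transform of $[\varphi,\psi]\in L^1(\widehat\Gamma)$ at $\eta$, and conclude by injectivity of $\mc F$ on $L^1(\widehat\Gamma)$ (the paper phrases this as uniqueness of Fourier coefficients); part (i) is the same pairing argument. Where you genuinely diverge is part (ii). The paper proves the Bessel property of $\mc E^{\Gamma}(\varphi)$ for compactly supported $\varphi$ by observing that $\alpha\mapsto[\varphi,\varphi](\alpha)$ is a trigonometric polynomial, hence bounded on the compact group $\widehat\Gamma$, and then establishes the reproducing formula directly on $\mbox{span}\,\mc E^{\Gamma}(\varphi)$ (where it is immediate from biorthogonality) and extends it to $\mc S^{\Gamma}(\varphi)$ by continuity of $f\mapsto\sum_{\gamma}\langle f,L_\gamma\psi\rangle L_\gamma\varphi$. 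You instead prove Besselness by a covering/overlap count on $\Gamma\cap xK^{-1}$ (which is correct and more self-contained, not relying on the bracket characterization from \cite{iverson2015subspaces}), and then conclude by citing Corollary \ref{C:DualMain}(i), since $[\varphi,\psi]=1$ a.e.\ on all of $\widehat\Gamma$ and in particular on $\Omega_\varphi$. Both closings are valid; yours is shorter because it leans on the already-proved characterization of subspace duals, while the paper's direct density argument keeps part (ii) independent of Theorem \ref{T:dualityMultiGenerators}. Your concluding remark about why the ``moreover'' clause of Lemma \ref{L:matrixelement} is unavailable without a Bessel hypothesis, and why one must therefore work at the $L^1(\widehat\Gamma)$ level, is a correct and worthwhile observation that the paper leaves implicit.
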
 
 \begin{proof} Firstly  observe that the biorthogonal relation between the $\Gamma$-TG systems   $\mc E^{\Gamma}(\varphi)$ and  $\mc E^{\Gamma}(\psi)$  is equivalent to  $\langle L_\gamma \varphi, \psi\rangle=\delta_{\gamma,0}$ for $\gamma \in \Gamma.$  Now
{\small\begin{align*}
	\delta_{\gamma,0}=\langle \mc ZL_\gamma \varphi, \mc Z \psi \rangle_{L^2(\widehat 
		\Gamma;L^2( \Gamma\backslash \G))}
	=\int_{\widehat\Gamma}\int_{\Gamma \backslash \G}\mc Z\varphi(\alpha, \Gamma x)\ol{\mc Z\psi(\alpha, \Gamma x)\alpha( \gamma)}\ \dGammax \ \dalpha
	=\int_{\widehat \Gamma }[\varphi,\psi](\alpha)\ol{\alpha(\gamma)}\ \dalpha. 
\end{align*}}Thus the result follows by   the uniqueness of the Fourier coefficients.

 For the remaining part of (i), let  $\{c_\gamma\}_{\gamma\in \Gamma} \in \ell^2 (\Gamma)$ be such that  
$\sum_{\gamma \in  \Gamma} c_\gamma L_\gamma \varphi = 0.$ Then  for each $\gamma'\in \Gamma,$  $0=\langle 0, L_{\gamma'}\psi\rangle=\langle\sum_{\gamma \in  \Gamma}c_\gamma L_\gamma \varphi,L_{\gamma'} \psi \rangle=\sum_{ \gamma \in \Gamma} c_\gamma\langle L_\gamma \varphi, L_{\gamma'} \psi \rangle=c_{\gamma'} $
by the biorthogonal relation  between  $\mc  E^{\Gamma}(\varphi)$ and $\mc  E^{\Gamma}(\psi),$ and hence all  $c_\gamma$'s  are zero. Thus $\mc  E^{\Gamma}(\varphi)$ is linearly independent. Similarly,   $\mc  E^{\Gamma}(\psi)$ is also linearly independent.

\noindent(ii) Due to  $\langle\varphi, L_\gamma \psi\rangle=\delta_{\gamma, 0}$ for $\gamma \in \Gamma,$ we have  $f=\sum_{\gamma \in \Gamma_1}\langle f,L_{\gamma}\psi\rangle L_{\gamma}\varphi$  for  all $f \in \mbox{span} \ \mc E^\Gamma(\varphi)$ and   $\Gamma_1$ is a finite subset of $\Gamma,$  which holds for all $f \in \mc S^\Gamma(\varphi)$ in view of  compactly supported functions $\varphi$ and $\psi,$  and the continuity of  the function $f\mapsto \sum_{\gamma \in \Gamma}\langle f,L_{\gamma}\psi\rangle L_{\gamma}\varphi.$  Note that  $\mc E^\Gamma(\varphi)$ and $\mc E^\Gamma(\psi)$ are Bessel families since      $\{[ \varphi,  \varphi](\alpha)\}_{\alpha \in \widehat \Gamma}$ and $\{[ \psi,  \psi](\alpha)\}_{\alpha \in \widehat \Gamma}$ are bounded sets for a.e. $\alpha \in \widehat \Gamma$ \cite{iverson2015subspaces}. The boundedness of $\{[ \varphi,  \varphi](\alpha)\}_{\alpha \in \widehat \Gamma}$  follows by observing  the continuity of the function $\alpha \mapsto [ \varphi,  \varphi](\alpha)$ from the compact set $\widehat \Gamma$ to $\mathbb R.$ Indeed $[ \varphi,  \varphi](\alpha)$ is a polynomial for a.e. $\alpha \in \widehat \Gamma,$ that can be realised by writing it in the form of Fourier series expansion where   only finitely many  Fourier coefficients are non-zero in view of the compact support of $\varphi.$  
\end{proof}
\begin{cor}\label{Co:BioExis}
		Let $\varphi \in L^2 (\mathscr G)$ be such that for a.e.  $\alpha \in  \widehat 
		\Gamma,$    $C \leq [ \varphi,  \varphi](\alpha)  \leq D$  for some constants $0<C \leq D < \infty.$ Then there is a $\psi\in L^2 (\mathscr G)$ such that $\mc E^{\Gamma}(\varphi)$ and $\mc E^{\Gamma}(\psi)$ are biorthogonal systems.  Moreover, $\mc  E^{\Gamma} (\varphi)$ is linearly independent. 
		\end{cor}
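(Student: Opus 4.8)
The plan is to build $\psi$ explicitly on the Zak side and then invoke Theorem~\ref{T:Bio1}. By that theorem, $\mc E^{\Gamma}(\varphi)$ and $\mc E^{\Gamma}(\psi)$ are biorthogonal precisely when $[\varphi,\psi](\alpha)=1$ for a.e.\ $\alpha\in\widehat\Gamma$, so it is enough to produce a single function $\psi\in L^2(\mathscr G)$ realizing this identity. The hypothesis $[\varphi,\varphi](\alpha)\ge C>0$ makes $\alpha\mapsto 1/[\varphi,\varphi](\alpha)$ a positive measurable function on $\widehat\Gamma$ bounded above by $1/C$, and this is exactly what drives the construction.

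First I would define $\psi$ through its Zak transform by
\[
(\mc Z\psi)(\alpha,\Gamma x):=\frac{1}{[\varphi,\varphi](\alpha)}\,(\mc Z\varphi)(\alpha,\Gamma x),\qquad \alpha\in\widehat\Gamma,\ \Gamma x\in\Gamma\backslash\mathscr G.
\]
This is legitimate because $\mc Z\colon L^2(\mathscr G)\to L^2(\widehat\Gamma\times\Gamma\backslash\mathscr G)$ is a unitary surjection and the right-hand side belongs to $L^2(\widehat\Gamma\times\Gamma\backslash\mathscr G)$; indeed, using \eqref{eq:bracket} with $\psi=\varphi$ to evaluate the inner integral and then $[\varphi,\varphi]\ge C$,
\[
\int_{\widehat\Gamma}\frac{1}{[\varphi,\varphi](\alpha)^{2}}\int_{\Gamma\backslash\mathscr G}\big|(\mc Z\varphi)(\alpha,\Gamma x)\big|^{2}\ \dGammax\,\dalpha
=\int_{\widehat\Gamma}[\varphi,\varphi](\alpha)^{-1}\,\dalpha\ \le\ \frac{\mu_{\widehat\Gamma}(\widehat\Gamma)}{C}\ <\ \infty,
\]
where $\mu_{\widehat\Gamma}(\widehat\Gamma)<\infty$ because $\Gamma$ is discrete, hence $\widehat\Gamma$ is compact. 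So $\psi\in L^2(\mathscr G)$ is well defined.

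Next I would check the bracket identity: since $[\varphi,\varphi](\alpha)^{-1}$ is real,
\[
[\varphi,\psi](\alpha)=\int_{\Gamma\backslash\mathscr G}(\mc Z\varphi)(\alpha,\Gamma x)\,\overline{(\mc Z\psi)(\alpha,\Gamma x)}\ \dGammax
=\frac{1}{[\varphi,\varphi](\alpha)}\int_{\Gamma\backslash\mathscr G}\big|(\mc Z\varphi)(\alpha,\Gamma x)\big|^{2}\ \dGammax=1
\]
for a.e.\ $\alpha\in\widehat\Gamma$. Moreover $\varphi\neq 0$ (since $[\varphi,\varphi]\ge C>0$) and $\psi\neq 0$ (since $[\psi,\psi](\alpha)=[\varphi,\varphi](\alpha)^{-1}\ge 1/D>0$), so Theorem~\ref{T:Bio1} applies and gives that $\mc E^{\Gamma}(\varphi)$ and $\mc E^{\Gamma}(\psi)$ are biorthogonal. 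The ``moreover'' assertion---linear independence of $\mc E^{\Gamma}(\varphi)$---then follows at once from part~(i) of Theorem~\ref{T:Bio1}.

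I do not anticipate a genuine obstacle here. The only step requiring any care is the square-integrability of the prescribed $\mc Z\psi$, that is, $\psi\in L^2(\mathscr G)$, and that reduces immediately to the uniform lower bound on $[\varphi,\varphi]$ together with the finiteness of the Haar measure of the compact group $\widehat\Gamma$; everything else is a direct appeal to Theorem~\ref{T:Bio1}.
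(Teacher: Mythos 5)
Your proof is correct and follows essentially the same route as the paper: define $\mc Z\psi=(\mc Z\varphi)/[\varphi,\varphi]$, check $[\varphi,\psi]\equiv 1$, and invoke Theorem \ref{T:Bio1} for both biorthogonality and linear independence. The only difference is that you explicitly verify $\psi\in L^2(\mathscr G)$ via the lower bound $C$ and the compactness of $\widehat\Gamma$, a step the paper's proof leaves implicit.
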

\begin{proof} For $\varphi \in L^2 (\mathscr G),$  choose $\psi \in L^2 (\mathscr G)$ satisfying   $\mc Z\varphi(\alpha, \Gamma x) =\mc Z \psi(\alpha, \Gamma x) [\varphi, \varphi](\alpha)$ for a.e. $\alpha \in \widehat{\Gamma}$ and $\Gamma x \in \Gamma \backslash \G.$ Then  $\mc E^{\Gamma}(\varphi)$ and $\mc E^{\Gamma}(\psi)$ are biorthogonal systems in view of Theorem \ref{T:Bio1} since  $[\varphi, \psi](\alpha)=1$  for a.e. $\alpha \in \widehat \Gamma.$  The moreover part  also follows by  Theorem \ref{T:Bio1}. 
\end{proof}

Our next goal is to fix a function $\varphi \in L^2 (\G)$ and want to find $\psi \in  L^2 (\G)$ such that $\mc E^{\Gamma}(\psi)$  is an $\mc S^{\Gamma}(\varphi)$-subspace dual to $\mc E^{\Gamma}(\varphi)$  by  following the idea of  Theorem \ref{T:Bio1}.      To find such $\psi,$    we will     assume   $\varphi \in L^2 (\G)$ with  compact support such that $\mc E^{\Gamma}(\varphi)$  is an $\mc S^{\Gamma}(\varphi)$-subspace Riesz basis in the next result. 
 By an \textit{$\mc S^{\Gamma}(\varphi)$-subspace Riesz basis}, we mean   $\mc E^{\Gamma}(\varphi)$  is an $\mc S^{\Gamma}(\varphi)$-subspace frame and $\mc E^{\Gamma}(\varphi)$ is linearly independent. Equivalently, there are $0 <A \leq B <\infty$ such that  $A\sum_{ \gamma \in \Gamma}|c_\gamma|^2\leq\|\sum_{ \gamma \in \Gamma}c_\gamma L_\gamma\varphi\|^2\leq B\sum_{\gamma\in \Gamma}|c_{\gamma}|^2$ for some  sequence $\{c_\gamma\}_{\gamma\in \Gamma} \in \ell^2 (\Gamma)$ having finitely many non-zero  terms. We refer  \cite{bownik2015structure,iverson2015subspaces,walnut2013introduction} for more details.

\begin{thm}\label{T:Bio2}
	Let $\varphi \in L^2(\G)$ be a function with compact support  such that $\mc E^{\Gamma}(\varphi)$  is an $\mc S^{\Gamma}(\varphi)$-subspace Riesz basis.   If there exists a function $\psi \in L^2(\G)$ such that   $\mc  E^{\Gamma}(\psi)$ is biorthogonal to $\mc  E^{\Gamma}(\varphi),$ $\mc  E^{\Gamma}(\psi)$ is an $S^\Gamma(\varphi)$-subspace dual to $\mc  E^{\Gamma}(\varphi).$ Moreover, $\mc E^{\Gamma}(\psi)$  is also an $\mc S^{\Gamma}(\psi)$-subspace Riesz basis.
\end{thm}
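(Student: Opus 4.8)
The plan is to convert biorthogonality and the Riesz basis hypothesis into conditions on the bracket map and then harvest both assertions from Theorem~\ref{T:Bio1}, Corollary~\ref{C:DualMain}, and the bracket description of subspace frames/Riesz bases used throughout the paper. Since $\mc E^{\Gamma}(\varphi)$ is an $\mc S^{\Gamma}(\varphi)$-subspace Riesz basis and $\varphi$ has compact support, the map $\alpha\mapsto[\varphi,\varphi](\alpha)$ is a trigonometric polynomial, continuous on the compact group $\widehat\Gamma$, and the Riesz basis property forces $0<A\le[\varphi,\varphi](\alpha)\le B<\infty$ for a.e. $\alpha\in\widehat\Gamma$; hence $\Omega_{\varphi}=\{\alpha:[\varphi,\varphi](\alpha)\neq0\}=\widehat\Gamma$ up to a null set and $\mc E^{\Gamma}(\varphi)$ is Bessel. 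By Theorem~\ref{T:Bio1}, biorthogonality of $\mc E^{\Gamma}(\varphi)$ and $\mc E^{\Gamma}(\psi)$ is equivalent to $[\varphi,\psi](\alpha)=1$ for a.e. $\alpha\in\widehat\Gamma$, and Theorem~\ref{T:Bio1}(i) already yields that $\mc E^{\Gamma}(\psi)$ is linearly independent.

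Granting for the moment that $\mc E^{\Gamma}(\psi)$ is Bessel, the first assertion is immediate: $[\varphi,\psi](\alpha)=1$ holds a.e. on $\Omega_{\varphi}=\widehat\Gamma$, so Corollary~\ref{C:DualMain}(i) says precisely that $\mc E^{\Gamma}(\psi)$ is an $\mc S^{\Gamma}(\varphi)$-subspace dual to $\mc E^{\Gamma}(\varphi)$. For the ``moreover'' part, by the paper's definition of a subspace Riesz basis it remains only to show that $\mc E^{\Gamma}(\psi)$ is a frame for $\mc S^{\Gamma}(\psi)$, linear independence being already recorded. Applying the Cauchy--Schwarz estimate~(\ref{eq:cauchyschwartzbracket}) to $1=[\varphi,\psi](\alpha)$ gives $1\le[\varphi,\varphi](\alpha)[\psi,\psi](\alpha)\le B[\psi,\psi](\alpha)$, so $[\psi,\psi](\alpha)\ge1/B>0$ for a.e. $\alpha$; coupled with the Bessel upper bound for $[\psi,\psi]$, the bracket characterization gives that $\mc E^{\Gamma}(\psi)$ is a frame for $\mc S^{\Gamma}(\psi)$ with $\Omega_{\psi}=\widehat\Gamma$. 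Thus $\mc E^{\Gamma}(\psi)$ is an $\mc S^{\Gamma}(\psi)$-subspace Riesz basis.

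The one step that needs genuine care, and which I expect to be the main obstacle, is the Bessel property of $\mc E^{\Gamma}(\psi)$ itself. Write $\psi=\psi_{0}+h$ with $\psi_{0}$ the orthogonal projection of $\psi$ onto $\mc S^{\Gamma}(\varphi)$ and $h\perp\mc S^{\Gamma}(\varphi)$; then $\langle h,L_{\gamma}\varphi\rangle=0$ for all $\gamma$, so $[\varphi,h]\equiv0$ (Lemma~\ref{L:matrixelement}) and $[\varphi,\psi_{0}]=[\varphi,\psi]\equiv1$. By Proposition~\ref{P:decomposition}, $\mc Z\psi_{0}(\alpha,\Gamma x)=\mf m_{0}(\alpha)\mc Z\varphi(\alpha,\Gamma x)$, and $\overline{\mf m_{0}}[\varphi,\varphi]\equiv1$ forces $\mf m_{0}=1/[\varphi,\varphi]$, whence $[\psi_{0},\psi_{0}]=1/[\varphi,\varphi]\in[1/B,1/A]$; thus $\mc E^{\Gamma}(\psi_{0})$ is Bessel (in fact a subspace Riesz basis for $\mc S^{\Gamma}(\varphi)$) and $[\varphi,\psi_{0}]\equiv1$ on $\Omega_{\varphi}$, so by Corollary~\ref{C:DualMain}(i) the reproducing formula $\sum_{\gamma\in\Gamma}\langle f,L_{\gamma}\psi_{0}\rangle L_{\gamma}\varphi=f$ holds for every $f\in\mc S^{\Gamma}(\varphi)$; since $\langle f,L_{\gamma}\psi\rangle=\langle f,L_{\gamma}\psi_{0}\rangle$ for such $f$ (the $h$-part drops out by $\Gamma$-invariance of $\mc S^{\Gamma}(\varphi)$), the reproducing formula~(\ref{subspace dual}) holds for $\psi$ as well, \emph{without} any Bessel hypothesis. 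What the hypotheses must genuinely supply is boundedness of $[h,h]$, equivalently of $[\psi,\psi]=[\psi_{0},\psi_{0}]+[h,h]$, because biorthogonality only pins $\psi$ down modulo $\mc S^{\Gamma}(\varphi)^{\perp}$; this is exactly the content of the paper's convention that a subspace dual be a Bessel system, and once it is in force the remaining steps above are routine bracket-map bookkeeping.
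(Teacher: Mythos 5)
Your argument for the main assertion is correct but reaches the reproducing formula by a genuinely different route from the paper. You work in the Zak domain: biorthogonality becomes $[\varphi,\psi]\equiv 1$ via Theorem~\ref{T:Bio1}, the decomposition $\psi=\psi_0+h$ with $\mc Z\psi_0=\mf m_0\,\mc Z\varphi$ and $\mf m_0=1/[\varphi,\varphi]$ (Proposition~\ref{P:decomposition}) shows $[\psi_0,\psi_0]=1/[\varphi,\varphi]$ is bounded, and Corollary~\ref{C:DualMain} applied to the Bessel pair $(\varphi,\psi_0)$ then gives the reproducing formula, which transfers to $\psi$ because $L_\gamma h\perp\mc S^{\Gamma}(\varphi)$. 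The paper instead argues in the time domain: for $f=\sum_\gamma c_\gamma L_\gamma\varphi$ in the span, biorthogonality identifies $\langle f,L_\gamma\psi\rangle$ with $c_\gamma$, the lower Riesz bound $C\le[\varphi,\varphi]$ yields $\sum_\gamma|c_\gamma|^2\le\|f\|^2/C$, and an explicit $\epsilon$-approximation extends $f=\sum_\gamma\langle f,L_\gamma\psi\rangle L_\gamma\varphi$ from $\operatorname{span}\mc E^{\Gamma}(\varphi)$ to $\mc S^{\Gamma}(\varphi)$. Your bound $[\psi_0,\psi_0]=1/[\varphi,\varphi]\le 1/A$ is exactly the Zak-domain counterpart of the paper's coefficient estimate, so the two proofs exchange the same quantitative input; yours avoids the density bookkeeping at the cost of invoking Proposition~\ref{P:decomposition} and Corollary~\ref{C:DualMain}, and it isolates more clearly where the hypotheses are actually used.

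On the Bessel point you flag: the concern is legitimate, and the paper does not resolve it either. The paper's proof only establishes $\sum_{\gamma}|\langle f,L_{\gamma}\psi\rangle|^{2}\le\|f\|^{2}/C$ for $f\in\mc S^{\Gamma}(\varphi)$, which suffices for the reproducing formula but is weaker than Besselness of $\mc E^{\Gamma}(\psi)$ on all of $L^{2}(\G)$; since the hypotheses do not control $[h,h]$, the ``moreover'' clause can fail when $h\ne 0$ and $\mc E^{\Gamma}(h)$ is not Bessel, and the paper in fact offers no argument for that clause at all. You go further than the paper here by supplying the lower bound $[\psi,\psi]\ge 1/[\varphi,\varphi]\ge 1/B$ via Cauchy--Schwarz together with linear independence from Theorem~\ref{T:Bio1}, so that the subspace Riesz basis conclusion would follow once an upper bound on $[\psi,\psi]$ (equivalently $\psi\in\mc S^{\Gamma}(\varphi)$, or a Bessel hypothesis on $\mc E^{\Gamma}(\psi)$) is granted. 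Your diagnosis of where the implicit assumption sits is accurate rather than a defect of your argument relative to the paper's.
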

\begin{proof} Firstly note that  for any $f\in \mbox{span}\  \mc  E^{\Gamma}(\varphi),$ we can write $f=\sum_{\gamma\in \Gamma'}\langle f,  L_\gamma \psi\rangle L_\gamma\varphi$ due to the biorthogonality of   $\mc  E^{\Gamma}(\psi)$  and  $\mc  E^{\Gamma}(\varphi)$ for some finite set  $\Gamma'$ in   $\Gamma.$ We need to show the expression for  all $f \in \mc  S^{\Gamma}(\varphi).$ For this let 
   $f \in  \mc S^{\Gamma}(\varphi).$ Then there is   an element $g\in \mbox{span}\  \mc E^{\Gamma}(\varphi) $  such that    $\|f-g\|< \epsilon$ for $\epsilon >0.$ By writing 
$g=\sum_{\gamma\in \Gamma_1}\langle f, L_\gamma \psi \rangle L_\gamma \varphi$ where $\Gamma_1$ is a finite subset of $\Gamma,$ we have $ f-\sum_{ \gamma\in \Gamma_1}\langle f, L_\gamma \psi \rangle L_\gamma \varphi= (f-g) +\sum_{\gamma\in \Gamma_1} \langle (g-f), L_\gamma \psi \rangle L_\gamma \varphi,$  and by taking norm on the both sides, we obtain
\begin{align*}
\|f-\sum_{ \gamma\in \Gamma_1}\langle f, L_\gamma \psi\rangle L_\gamma \varphi\|&\leq\|f-g\|+\|\sum_{ \gamma\in \Gamma_1} \langle (g-f), L_\gamma \psi \rangle L_\gamma \varphi\|\\
&\leq\|f-g\|+ \sqrt{B} \Big( \sum_{ \gamma\in \Gamma_1} |\langle (g-f), L_\gamma \psi \rangle|^2 \Big)^{1/2}\\
&\leq (1+ \sqrt{B'B})\|f-g\|  < (1+ \sqrt{B'B}) \epsilon  
\end{align*}
for some $B, B'>0$  since $\mc E^{\Gamma}(\varphi)$  is an $\mc S^{\Gamma}(\varphi)$-subspace Riesz basis.  The last inequality holds true provided $\sum_{ \gamma \in \Gamma}|\langle f, L_{\gamma}\psi\rangle|^2\leq B'\|f\|^2$ for all $f \in \mc S^{\Gamma}(\varphi).$   For this,  let $f \in \mc S^{\Gamma}(\varphi).$ Then there is a  sequence $\{f_n\}_{n\in \mathbb N}$ in $\mbox{span} \, \mc E^{\Gamma}(\varphi)$ such that $\lim\limits_{n \ra \infty}$ $\|f_n-f\|=0,$   and also by Cauchy-Schwarz inequality, we have $\lim\limits_{n \ra \infty}$$\langle f_n, L_\gamma\psi\rangle=\langle f, L_\gamma\psi\rangle$   for every $\gamma \in \Gamma.$ Hence for any finite set $\Gamma_1$ of $\Gamma,$ we have 
\begin{align*}
&\sum_{ \gamma \in \Gamma_1}|\langle f, L_{\gamma}\psi\rangle|^2=\sum_{\gamma\in \Gamma_1} \lim\limits_{n \ra \infty} |\langle f_n, L_{\gamma}\psi\rangle|^2=\lim\limits_{n \ra \infty}\sum_{ \gamma \in \Gamma_1}|\langle f_n, L_{\gamma}\psi\rangle|^2\leq B' \lim \limits_{n \ra \infty}\|f_n\|^2= B'\|f\|^2,  
\end{align*}
provided $\sum_{ \gamma \in \Gamma}|\langle f, L_{\gamma}\psi\rangle|^2\leq B'\|f\|^2$ for all $f \in \mbox{span} \, \mc E^{\Gamma}(\varphi).$  For this we proceed as follows.\\
 By writing $f \in \mbox{span} \, \mc E^{\Gamma}(\varphi)$   in the form  $f=\sum_{ \gamma \in \Gamma}c_{\gamma}L_\gamma\varphi$ with finitely many non-zeros   $\{c_\gamma\}_{\gamma\in \Gamma}\in \ell^2,$ we get 	$\|f\|^2=\|\mc Zf\|^2	=\int_{\widehat \Gamma}| \widehat c(\alpha)|^2 [ \varphi,  \varphi](\alpha) \ \dalpha \nonumber$ by following the steps of (\ref{eq:normf}). Since $\mc E^{\Gamma}(\varphi)$ is an  $\mc S^\Gamma(\varphi)$-subspace Riesz basis,  we have  $C\leq [\varphi, \varphi](\alpha)\leq D$  for a.e. $\alpha\in \widehat \Gamma$ and  some $0< C \leq D < \infty$ \cite[Remark 5.6]{iverson2015subspaces}, and hence $C\int_{ \widehat \Gamma}|\widehat{c}(\alpha)|^2\dalpha \leq \|f\|^2\leq D \int_{\widehat \Gamma}|\widehat{c}(\gamma)|^2\ \dalpha.$ Thus, we get  
$ \int_{ \widehat \Gamma}|\widehat{c}(\alpha)|^2\ \dalpha \leq \frac{1}{C} \|f\|^2.$ Further due to the biorthogonality of the sets $\mc E^{\Gamma}(\varphi)$ and $\mc E^{\Gamma}(\psi)$ and Parseval's formula, we write 
$\int_{ \widehat \Gamma }|\widehat{c}(\alpha)|^2\ \dalpha=\sum_{\gamma\in \Gamma}|c_{\gamma}|^2=\sum_{ \gamma \in \Gamma}|\langle f, L_{\gamma}\psi\rangle|^2,$ which gives the    inequality
$\sum_{ \gamma \in \Gamma}|\langle f, L_{\gamma}\psi\rangle|^2\leq B'\|f\|^2$ for  $f \in \mbox{span} \, \mc E^{\Gamma}(\varphi)$ and $B'=\frac{1}{C}.$ Thus the result follows. 
\end{proof}
 In the following example we  construct various biorthogonal systems using Theorem \ref{T:Bio1}.
\begin{exa}  First we recall Example \ref{ex:discrete-dual1} and assume that  $\mc E^{\Gamma}(\psi)$ is an $\mc S^{\Gamma}(\psi)$-subspace Riesz basis.   The functions   $\varphi,\tilde{\varphi}\in L^2( \mathscr G)$    are defined  by $(\mc Z\varphi)(\alpha, \Gamma x)=\mf m(\alpha) (\mc Z\psi)(\alpha, \Gamma x)$ and $(\mc Z\tilde\varphi)(\alpha, \Gamma x)=\tilde{\mf m}(\alpha)(\mc Z\psi)(\alpha, \Gamma x)$   for all $\Gamma x\in \Gamma \backslash \mathscr G,$ and for a.e. $\alpha \in \widehat \Gamma,$  where  $\mf m,  \tilde{\mf m} \in L^2 \left(\widehat \Gamma, [ \psi,  \psi]\right).$ 	Then in view of Theorem \ref{T:Bio1},  $\mc E^{\Gamma}(\varphi)$  and $ \mc E^{\Gamma}(\tilde\varphi)$ are biorthogonal  
	if and only if
	$\mf m(\alpha)\ol{\tilde{ \mf m}(\alpha)}=\frac{1}{[\psi,\psi](\alpha)}$ for a.e. $\alpha \in \widehat \Gamma,$ follows from the calculations of Example \ref{ex:discrete-dual1}.

\end{exa}
\section{Orbit generated by  the action of a non-discrete abelian subgroup} \label{s:non-discrete}
The purpose of this section is devoted to characterize a pair of orthogonal frames and  subspace dual  of a Bessel family/frame   generated by  the $\Gamma$-TG system $\mc E^\Gamma(\mc A)=\{L_\gamma\varphi_t:\gamma \in \Gamma, t \in \mc N\}$  in $L^2(\G),$  where the group   $\Gamma$ is a closed abelian  (need not be discrete) subgroup of $\G,$   $\mc N$ is a  $\sigma$-finite measure space (need not be countable), and $\mc A =\{\varphi_t\}_{t\in \mc N} \subset L^2 (\G).$ We characterize such results using the  Zak transform $\mc Z$ for the pair $(\mathscr G, \Gamma)$ defined by (\ref{eq:ZakTransform}). When  $\mathscr G$ becomes an abelian group $\mc G,$  the fiberization map is also used which unifies the classical results related to the orthogonal and duals of a Bessel family/frame associated with a TI  space.  
\subsection{Orthogonal and dual frames using range function}
Due to the discrete nature of $\Gamma,$ we made use of the orthonormal basis property of $\widehat \Gamma$ in the  previous section. However, in this section, we are making use of the approach of range function on $\widehat \Gamma.$  A  \textit{range function}  on $\widehat\Gamma$ is a mapping 
$J$ from $\widehat\Gamma$ to $\{\text{closed subspaces of} ~L^2(\Gamma\backslash \G)\},$ which   is  measurable  if   
$\alpha\mapsto \langle P_J(\alpha)u,v\rangle$ is measurable on $\widehat \Gamma$ for any $u,v \in L^2(\Gamma\backslash \G),$ where for a.e. $\alpha \in \widehat \Gamma,$ $P_{J} (\alpha): L^2(\Gamma\backslash \G) \rightarrow  J(\alpha)$ is an  orthogonal projection.  The associated    closed subspace $V_J$ of $L^2(\Gamma\backslash \G)$  defined by  $V_J =\left\{\varphi \in L^2(\G):\mc Z\varphi(\alpha)\in J(\alpha)~\mbox{for}~ \a.e.~ \alpha\in \widehat \Gamma\right\}$  plays a crucial  role to establish the theory.  Further  it can be  noted that a  $\Gamma$-TI subspace in $L^2(\G)$ can be identified with  a range function $J.$ Indeed there is a bijection  $J\mapsto V_J.$ In particular,  the range function   $J(\alpha)= \overline{\mbox{span}}\{(\mc Zf)(\alpha) : f \in \mc A_0\} $ is associated with  $V_J=\mc S^\Gamma(\mc A)$ for some countable dense subset $\mc A_0$ of $\mc A$ in $L^2 (\mathscr G).$ For more details about the range function,  we refer \cite{bownik2015structure, iverson2015subspaces}.

 Now, we are going to talk about our main  result, which is connected to the orthogonal subspace and dual subspace of a Bessel family that is associated with the range function in terms of the Zak transform. It includes certain results of \cite[Corrollary 5.11, Corrollary 5.13]{bownik2019multiplication}, which contains an alternative strategy for proving the result.

\begin{thm}\label{Th: Dual-Ortho-Non Discerete}  Let $(\mc N,\mu_{\mc N})$  be  a complete,   $\sigma$-finite measure space and let $\mc A=\{\varphi_t\}_{t\in \mc N}$ and $\mc A'=\{\psi_t\}_{t\in \mc N}$ be two collections of functions  in $L^2(\mathscr G)$ such that the $\Gamma$-TG systems $ \mc E^{\Gamma}(\mc A)$  and $\mc  E^{\Gamma}(\mc A')$ are Bessel. Assume  $\mc A$ has a countable dense subset $\mc A_0$ for which $J_{{\mc  A}} (\alpha)=  \overline{{\Span}}\{(\mc Zf)(\alpha) : f \in \mc A_0\}$    a.e.  $\alpha \in \widehat{\Gamma},$  where   ${\mc Z \mc A} = \{\mc Z f: f \in \mc A\}.$ Then  the following hold true: 
	\begin{itemize}
		\item[(i)] $\mc E^{\Gamma}(\mc A')$  is an $\mc S^{\Gamma}(\mc A) $-subspace   dual  to $\mc E^{\Gamma}(\mc A)$ if and only if 
	 	for a.e. $\alpha \in \widehat{\Gamma},$ the system $(\mc Z \mc A') (\alpha)=\{\mc Z \psi (\alpha): \psi \in \mc A'\}$ is a   $J_{ \mc A}  (\alpha)$-subspace dual  to $(\mc Z \mc A) (\alpha)=\{\mc Z \varphi (\alpha): \varphi \in \mc A\}.$ 
	 	\item[(ii)] $\mc E^{\Gamma}(\mc A')$  is an $\mc S^{\Gamma}(\mc A) $-subspace   orthogonal  to $\mc E^{\Gamma}(\mc A)$ if and only if 
	 	for a.e. $\alpha \in \widehat{\Gamma},$ the system $(\mc Z \mc A') (\alpha)$ is a   $J_{ \mc A}  (\alpha)$-subspace orthogonal  to $(\mc Z \mc A) (\alpha).$ 
	\end{itemize}
When  the pair $(\mathscr G, \Gamma)$ is   an abelian pair  $(\mc G, \Lambda),$ let 
$J_{\mc A} (\beta \Lambda^\perp)=  \overline{{\Span}}\{(\mathscr T f)(\beta \Lambda^\perp) : f \in \mc A_0\}$
 for a.e. $\beta \Lambda^\perp \in \widehat{\mc G}/\Lambda^\perp,$ where   ${\mathscr T \mc A} = \{\mathscr T f: f \in \mc A\}.$ Then (i) and (ii) become (i') and (ii') as follows:
 	\begin{itemize}
 	\item[(i')] $\mc E^{\Lambda}(\mc A')$  is an $\mc S^{\Lambda}(\mc A) $-subspace   dual  to $\mc E^{\Lambda}(\mc A)$ if and only if 
  for a.e. $\beta \Lambda^\perp \in \widehat{\mc G}/\Lambda^\perp,$  the system $(\mathscr T \mc A') (\beta \Lambda^\perp)=\{\mathscr T  \psi (\beta \Lambda^\perp): \psi \in \mc A'\}$ is a   $J_{\mc A}  (\beta \Lambda^\perp)$-subspace dual  to $(\mathscr T \mc A) (\beta \Lambda^\perp)=\{\mathscr T  \varphi (\beta \Lambda^\perp): \varphi \in \mc A\}.$
   
 	\item[(ii')] $\mc E^{\Lambda}(\mc A')$  is an $\mc S^{\Lambda}(\mc A) $-subspace   orthogonal  to $\mc E^{\Lambda}(\mc A)$ if and only if   	for a.e.  $\beta \Lambda^\perp \in \widehat{\mc G}/\Lambda^\perp,$ the system $(\mathscr T \mc A') (\beta \Lambda^\perp)$ is a   $J_{\mc A}  (\beta \Lambda^\perp)$-subspace orthogonal  to $(\mathscr T \mc A) (\beta \Lambda^\perp).$
 \end{itemize}
 \end{thm}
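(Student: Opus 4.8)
The plan is to transport the whole statement to the fibers of the Zak transform and argue there, following the scheme of the proof of Theorem~\ref{T:dualityMultiGenerators} but replacing the orthonormal-basis argument on $\widehat\Gamma$ (which is only available when $\Gamma$ is discrete) by the range-function formalism of \cite{bownik2015structure,iverson2015subspaces}. Recall that $\mathcal Z:L^2(\mathscr G)\to L^2\bigl(\widehat\Gamma;L^2(\Gamma\backslash\mathscr G)\bigr)$ is unitary and intertwines left translation with modulation, $\mathcal Z(L_\gamma f)(\alpha,\Gamma x)=\alpha(\gamma^{-1})\,\mathcal Zf(\alpha,\Gamma x)$; and that, since $\mc E^{\Gamma}(\mc A)$ and $\mc E^{\Gamma}(\mc A')$ are Bessel, the fibers $(\mathcal Z\mc A)(\alpha)=\{\mathcal Z\varphi_t(\alpha)\}_{t\in\mc N}$ and $(\mathcal Z\mc A')(\alpha)=\{\mathcal Z\psi_t(\alpha)\}_{t\in\mc N}$ are continuous Bessel families in $L^2(\Gamma\backslash\mathscr G)$ for a.e.\ $\alpha$ \cite{iverson2015subspaces}. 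Under the bijection $J\mapsto V_J$ between measurable range functions on $\widehat\Gamma$ and $\Gamma$-TI subspaces of $L^2(\mathscr G)$, the standing hypothesis says precisely that $\mc S^{\Gamma}(\mc A)=V_{J_{\mc A}}$, i.e.\ $f\in\mc S^{\Gamma}(\mc A)$ if and only if $\mathcal Zf(\alpha)\in J_{\mc A}(\alpha)$ for a.e.\ $\alpha$.

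The first and main step is the continuous, multi-generator analogue of Proposition~\ref{P:zakpre-gramian}(iii): for every $f\in L^2(\mathscr G)$,
\begin{equation*}
\bigl\{\mathcal Z\bigl(T_{\mc E^{\Gamma}(\mc A)}^{*}T_{\mc E^{\Gamma}(\mc A')}f\bigr)(\alpha,\Gamma x)\bigr\}_{\Gamma x\in\Gamma\backslash\mathscr G}=\tilde G_{\mc A,\mc A'}(\alpha)\,\bigl\{\mathcal Zf(\alpha,\Gamma x)\bigr\}_{\Gamma x\in\Gamma\backslash\mathscr G}\quad\text{for a.e.\ }\alpha\in\widehat\Gamma,
\end{equation*}
where $\tilde G_{\mc A,\mc A'}(\alpha)$ is the mixed dual Gramian of the fiber systems, $\tilde G_{\mc A,\mc A'}(\alpha)v=\int_{\mc N}\langle v,\mathcal Z\psi_t(\alpha)\rangle\,\mathcal Z\varphi_t(\alpha)\,\dn$. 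To prove this one pairs with an arbitrary $g\in L^2(\mathscr G)$: using weak (Pettis) integral descriptions of the analysis and synthesis operators one writes $\langle T_{\mc E^{\Gamma}(\mc A)}^{*}T_{\mc E^{\Gamma}(\mc A')}f,g\rangle=\int_{\mc N}\int_{\Gamma}\langle f,L_\gamma\psi_t\rangle\langle L_\gamma\varphi_t,g\rangle\,d\mu_{\Gamma}(\gamma)\,\dn$, then applies the intertwining identity together with the Plancherel theorem for $(\Gamma,\widehat\Gamma)$ — i.e.\ the analogue of Lemma~\ref{L:BesselCharecterization} valid for a general closed abelian $\Gamma$, obtained by the same computation once the Bessel hypothesis places the relevant functions in $L^2(\Gamma)$ and $L^2(\widehat\Gamma)$ — to recast the right-hand side as $\int_{\widehat\Gamma}\langle\tilde G_{\mc A,\mc A'}(\alpha)\mathcal Zf(\alpha),\mathcal Zg(\alpha)\rangle\,\dalpha$. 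The required Fubini interchanges and the measurability of the field $\alpha\mapsto\tilde G_{\mc A,\mc A'}(\alpha)$ are controlled by the Bessel bounds and the completeness of $(\mc N,\mu_{\mc N})$.

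Granting this decomposition, part~(i) follows directly. By definition $\mc E^{\Gamma}(\mc A')$ is an $\mc S^{\Gamma}(\mc A)$-subspace dual to $\mc E^{\Gamma}(\mc A)$ precisely when $T_{\mc E^{\Gamma}(\mc A)}^{*}T_{\mc E^{\Gamma}(\mc A')}f=f$ for every $f\in\mc S^{\Gamma}(\mc A)=V_{J_{\mc A}}$; applying $\mathcal Z$ and the displayed formula, this reads $\tilde G_{\mc A,\mc A'}(\alpha)\,\mathcal Zf(\alpha)=\mathcal Zf(\alpha)$ for a.e.\ $\alpha$, for every such $f$. On the one hand, if $\tilde G_{\mc A,\mc A'}(\alpha)$ is the identity on $J_{\mc A}(\alpha)$ for a.e.\ $\alpha$, then since $\mathcal Zf(\alpha)\in J_{\mc A}(\alpha)$ a.e.\ the condition is immediate; on the other hand, applying the condition to the countable family $\mc A_0$ (which lies in $\mc S^{\Gamma}(\mc A)$ and for which $\overline{\Span}\{\mathcal Zf(\alpha):f\in\mc A_0\}=J_{\mc A}(\alpha)$ a.e.) and using the boundedness of $\tilde G_{\mc A,\mc A'}(\alpha)$, one gets that $\tilde G_{\mc A,\mc A'}(\alpha)$ is the identity on $J_{\mc A}(\alpha)$ for a.e.\ $\alpha$. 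Thus $\mc E^{\Gamma}(\mc A')$ is an $\mc S^{\Gamma}(\mc A)$-subspace dual to $\mc E^{\Gamma}(\mc A)$ if and only if, for a.e.\ $\alpha$, $\int_{\mc N}\langle v,\mathcal Z\psi_t(\alpha)\rangle\mathcal Z\varphi_t(\alpha)\,\dn=v$ for all $v\in J_{\mc A}(\alpha)$, i.e.\ $(\mathcal Z\mc A')(\alpha)$ is a $J_{\mc A}(\alpha)$-subspace dual to $(\mathcal Z\mc A)(\alpha)$. Part~(ii) is the verbatim argument with the identity replaced by $0$, recalling that $\mc S^{\Gamma}(\mc A)$-subspace orthogonality of $\mc E^{\Gamma}(\mc A')$ to $\mc E^{\Gamma}(\mc A)$ amounts to $T_{\mc E^{\Gamma}(\mc A)}^{*}T_{\mc E^{\Gamma}(\mc A')}\big|_{\mc S^{\Gamma}(\mc A)}=0$.

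The primed statements for an abelian pair $(\mc G,\Lambda)$ are obtained by running the identical argument with the fiberization map $\mathscr T$ in place of $\mathcal Z$: $\mathscr T$ is unitary and also intertwines left translation with modulation (as computed just after \eqref{zak-fib}), the fiber over $\omega\Lambda^{\perp}\in\widehat{\mc G}/\Lambda^{\perp}$ is $L^2(\Lambda^{\perp})$, and \eqref{zak-fib} matches the corresponding brackets, so $J_{\mc A}(\omega\Lambda^{\perp})$ plays the role of $J_{\mc A}(\alpha)$. The crux of the whole proof is the first step: establishing the fiberwise decomposition of the mixed dual Gramian when $\Gamma$ is an arbitrary closed (not necessarily discrete) abelian subgroup and $\mc N$ an arbitrary $\sigma$-finite measure space, which is where the continuous analogue of Lemma~\ref{L:BesselCharecterization}, the weak-integral manipulations, and the measurability and Fubini bookkeeping all have to be set up with care; once it is in place, passing from ``identity, resp.\ zero, on all of $\mc S^{\Gamma}(\mc A)$'' to ``identity, resp.\ zero, a.e.\ on each fiber $J_{\mc A}(\alpha)$'' is a routine measurable-selection argument.
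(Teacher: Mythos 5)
Your argument is correct and follows the paper's overall scheme --- fiberize everything through $\mc Z$ and reduce to the transported Parseval-type identity, which is exactly the paper's Proposition~\ref{P:BesselCharecterization} --- but you localize from the global identity to the individual fibers by a genuinely different mechanism. The paper keeps everything scalar-valued: it writes the duality/orthogonality condition as $\int_{\mc N}\int_{\widehat\Gamma}\langle \mc Zf(\alpha),\mc Z\varphi_t(\alpha)\rangle\langle\mc Z\psi_t(\alpha),\mc Zg(\alpha)\rangle\,d\mu_{\widehat\Gamma}(\alpha)\,d\mu_{\mc N}(t)=\int_{\widehat\Gamma}\langle\mc Zf(\alpha),\mc Zg(\alpha)\rangle\,d\mu_{\widehat\Gamma}(\alpha)$ and then argues by contradiction: it fixes a countable dense set $\{x_n\}$ of $L^2(\Gamma\backslash\mathscr G)$, forms the sets $S_{m,n}$ where the fiberwise defect $\rho_{m,n}(\alpha)$ is nonzero, splits a putative bad set into the four pieces where $\operatorname{Re}\rho$ or $\operatorname{Im}\rho$ has a definite sign, and builds test functions $h_1,h_2$ supported there (via $P_{J_{\mc A}}(\alpha)x_{m_0}$) to contradict the integral identity. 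You instead promote Proposition~\ref{P:zakpre-gramian}(iii) to the continuous setting, obtaining the operator-valued decomposition $\mc Z(T^*_{\mc E^{\Gamma}(\mc A)}T_{\mc E^{\Gamma}(\mc A')}f)(\alpha)=\tilde G_{\mc A,\mc A'}(\alpha)\mc Zf(\alpha)$ a.e., and then test only against the countable generating family $\mc A_0$, using the uniform bound $\|\tilde G_{\mc A,\mc A'}(\alpha)\|\le\sqrt{BB'}$ to pass from $\operatorname{span}\{\mc Zf(\alpha):f\in\mc A_0\}$ to its closure $J_{\mc A}(\alpha)$. Your ``only if'' direction is cleaner --- it avoids the sign-splitting case analysis entirely and uses the hypothesis on $\mc A_0$ exactly where it is needed --- but it shifts the burden onto establishing that $\alpha\mapsto\tilde G_{\mc A,\mc A'}(\alpha)\mc Zf(\alpha)$ is a well-defined measurable square-integrable vector field (so that the weak identity against all $\mc Zg$ upgrades to an a.e.\ pointwise one); you flag the Fubini/measurability bookkeeping rather than carry it out, which is the one place your sketch is thinner than the paper's fully scalar route. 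The treatment of part (ii) and of the abelian/fiberization variants matches the paper's.
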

\noindent Before proceeding towards a proof of Theorem \ref{Th: Dual-Ortho-Non Discerete},  we first establish the following result to express the change of role of $\Gamma$ to $\widehat \Gamma$ in terms of  the  Zak transform $\mc Z,$ which basically a continuous version of Lemma \ref{L:BesselCharecterization}. To remove the confusion we provide a proof also.
\begin{prop}\label{P:BesselCharecterization} Assuming the   hypotheses of Theorem   \ref{Th: Dual-Ortho-Non Discerete},   the following holds 	for all $f,g \in L^2(\G)$:
	\begin{align*}
	\int_{\mc N}\int_{\Gamma}\left \langle f, L_{\gamma}\varphi_t\right \rangle\langle L_{\gamma}\psi_t,g\rangle\ d{\mu_{\mc N}}(t)\ d{\mu_{\Gamma}}(\gamma)
	=\int_{\mc N}\int_{\widehat {\Gamma}}\left\langle  \mc Zf(\alpha), \mc Z\varphi_t(\alpha)\right\rangle\left \langle \mc Z\psi_t(\alpha),\mc Zg(\alpha)\right \rangle \ d{\mu_{\widehat \Gamma}}(\alpha)\ d{\mu_{\mc N}}(t).
	\end{align*}
\end{prop}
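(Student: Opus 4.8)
\noindent The plan is to transcribe the proof of Lemma~\ref{L:BesselCharecterization} to the (possibly non-discrete) abelian group $\Gamma$, with the Parseval identity on $\ell^2(\Gamma)$ replaced by the Plancherel theorem for $\Gamma$, i.e.\ by the unitarity of the Fourier transform between $L^2(\Gamma)$ and $L^2(\widehat\Gamma)$. First I would record the intertwining identity
\[
\mc Z(L_\gamma h)(\alpha,\Gamma x)=\alpha(\gamma^{-1})\,\mc Zh(\alpha,\Gamma x),\qquad h\in L^2(\G),\ \gamma\in\Gamma,\ \alpha\in\widehat\Gamma,
\]
which is verified exactly as in the proof of Lemma~\ref{L:matrixelement} by a change of variable against the (unimodular) Haar measure on $\Gamma$. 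Combined with the unitarity of $\mc Z\colon L^2(\G)\to L^2(\widehat\Gamma\times\Gamma\backslash\G)$ this gives, for each fixed $t\in\mc N$ and every $\gamma\in\Gamma$,
\[
\langle f,L_\gamma\varphi_t\rangle=\int_{\widehat\Gamma}\langle\mc Zf(\alpha),\mc Z\varphi_t(\alpha)\rangle\,\alpha(\gamma)\,\dalpha=[f,\varphi_t]^{\vee}(\gamma)\quad\text{and}\quad\overline{\langle L_\gamma\psi_t,g\rangle}=[g,\psi_t]^{\vee}(\gamma),
\]
so that $\gamma\mapsto\langle f,L_\gamma\varphi_t\rangle$ and $\gamma\mapsto\overline{\langle L_\gamma\psi_t,g\rangle}$ are the inverse Fourier transforms on $\Gamma$ of the fiber brackets $[f,\varphi_t](\cdot)$ and $[g,\psi_t](\cdot)$.

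\noindent The second step is the integrability bookkeeping. Since $\mc E^{\Gamma}(\mc A)$ and $\mc E^{\Gamma}(\mc A')$ are Bessel, Tonelli shows that for a.e.\ $t\in\mc N$ the maps $\gamma\mapsto\langle f,L_\gamma\varphi_t\rangle$ and $\gamma\mapsto\langle L_\gamma\psi_t,g\rangle$ lie in $L^2(\Gamma)$; since $[f,\varphi_t](\cdot)$ and $[g,\psi_t](\cdot)$ always lie in $L^1(\widehat\Gamma)$, Plancherel then forces them into $L^2(\widehat\Gamma)$ for a.e.\ $t$, just as in the ``moreover'' part of Lemma~\ref{L:matrixelement}. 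For such $t$, Plancherel on $\Gamma$ turns the $\gamma$-integral into an $L^2(\widehat\Gamma)$ pairing, namely $\int_{\Gamma}\langle f,L_\gamma\varphi_t\rangle\langle L_\gamma\psi_t,g\rangle\,d{\mu_{\Gamma}}(\gamma)=\langle[f,\varphi_t]^{\vee},[g,\psi_t]^{\vee}\rangle_{L^2(\Gamma)}=\langle[f,\varphi_t],[g,\psi_t]\rangle_{L^2(\widehat\Gamma)}$, and the last expression is exactly $\int_{\widehat\Gamma}\langle\mc Zf(\alpha),\mc Z\varphi_t(\alpha)\rangle\langle\mc Z\psi_t(\alpha),\mc Zg(\alpha)\rangle\,\dalpha$, i.e.\ the asserted identity fiber by fiber in $t$. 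Integrating over $\mc N$ and invoking Fubini then finishes the proof, once the left-hand double integral is seen to be absolutely convergent: two successive applications of Cauchy--Schwarz, first in $L^2(\Gamma)$ for fixed $t$ and then in $L^2(\mc N)$, bound its modulus by $\big(\int_{\mc N}\int_{\Gamma}|\langle f,L_\gamma\varphi_t\rangle|^2\big)^{1/2}\big(\int_{\mc N}\int_{\Gamma}|\langle L_\gamma\psi_t,g\rangle|^2\big)^{1/2}\le\sqrt{BB'}\,\|f\|\,\|g\|$, where $B,B'$ are the Bessel bounds of $\mc E^{\Gamma}(\mc A)$ and $\mc E^{\Gamma}(\mc A')$.

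\noindent The step I expect to be the main obstacle is the measure-theoretic plumbing that legitimises Fubini--Tonelli on the product $\mc N\times\widehat\Gamma$: one must know that $(t,\alpha)\mapsto\langle\mc Zf(\alpha),\mc Z\varphi_t(\alpha)\rangle$ and its companion maps are jointly measurable, and upgrade the a.e.-in-$t$ assertions above to statements valid on a full-measure subset of $\mc N\times\widehat\Gamma$. This is exactly where the completeness and $\sigma$-finiteness of $(\mc N,\mu_{\mc N})$ and the weak measurability built into the definition of $\mc E^{\Gamma}(\mc A)$ are used, and it can be handled with the range-function measurability machinery of \cite{bownik2015structure,iverson2015subspaces}; everything else is a routine continuous transcription of the discrete computation in Lemma~\ref{L:BesselCharecterization}.
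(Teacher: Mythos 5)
Your argument is correct and follows essentially the same route as the paper's proof: both express $\langle f,L_\gamma\varphi_t\rangle$ and $\langle L_\gamma\psi_t,g\rangle$ as inverse Fourier transforms of the fiber brackets $\zeta_t(\alpha)=\langle\mc Zf(\alpha),\mc Z\varphi_t(\alpha)\rangle$ and $\eta_t(\alpha)=\langle\mc Zg(\alpha),\mc Z\psi_t(\alpha)\rangle$, check $\zeta_t,\eta_t\in L^1(\widehat\Gamma)$ by Cauchy--Schwarz and $\check\zeta_t,\check\eta_t\in L^2(\Gamma)$ from the Bessel hypothesis, and then convert the $\Gamma$-integral into the $\widehat\Gamma$-integral by Plancherel before integrating over $\mc N$. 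The only (minor) difference is that you explicitly flag the joint-measurability/Fubini issue on $\mc N\times\widehat\Gamma$, which the paper passes over silently.
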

\begin{proof}	Applying the Zak transform,
	\begin{align}\label{eq:Gamma to widehat Gamma}
\int_{\mc N}\int_{\Gamma}\left \langle f, L_{\gamma}\varphi_t\right  \rangle
&\langle L_{\gamma}\psi_t,g\rangle \ d{\mu_{\Gamma}}(\gamma)\ d{\mu_{\mc N}}(t) =\int_{\mc N}\int_{\Gamma} \langle \mc Zf, \mc Z(L_{\gamma}\varphi_t)\rangle \langle \mc Z(L_{\gamma}\psi_t), \mc Zg\rangle \ d{\mu_{\Gamma}}(\gamma)\ d{\mu_{\mc N}}(t) \nonumber\\
& =	\int_{\mc N}\int_{\Gamma} \left(\int_{\widehat \Gamma} \zeta_t(\alpha)   {\alpha(\gamma)} \ d{\mu_{\widehat \Gamma}}(\alpha)\right)\ol{\left(\int_{\widehat \Gamma} \eta_t(\alpha) \alpha(\gamma) \ d{\mu_{\widehat \Gamma}}(\alpha)\right)}\ d{\mu_{\Gamma}}(\gamma)\ d{\mu_\mc N}(t),
\end{align}
where $\zeta_t(\alpha)=\langle \mc Zf(\alpha),\mc Z\varphi_t(\alpha)\rangle$ and 
$\eta_t(\alpha)=\langle \mc Zg(\alpha),\mc Z\psi_t(\alpha)\rangle$ for each $t\in \mc N.$ The functions 	   $\zeta_t$ and $\eta_t$ are   in  $L^1(\widehat\Gamma)$ due to Cauchy-Schwarz inequality and  
\begin{align*}
&\int_{\widehat{\Gamma}}|\zeta_t(\alpha)|\ d{\mu_{\widehat\Gamma}}(\alpha)=\int_{\widehat{\Gamma}}\left|\int_{\Gamma\backslash \mathscr G}\mc Zf(\alpha)(\Gamma x) \ol{\mc Z\varphi_t(\alpha)(\Gamma x)} \ d{\mu_{\Gamma\backslash \mathscr G}}(\Gamma x)\right| \ d{\mu_{\widehat \Gamma}}(\alpha)\\
 &\leq \left(	\int_{\widehat{\Gamma}} \int_{\Gamma\backslash \mathscr G}|\mc Zf(\alpha)(\Gamma x)|^2 \ d{\mu_{\Gamma\backslash \mathscr G}}(\Gamma x)\ d{\mu_{\widehat \Gamma}}(\alpha) \right)^{1/2}
\left(\int_{\widehat{\Gamma}}\int_{\Gamma\backslash \mathscr G}|\mc Z\varphi_t(\alpha)(\Gamma x)|^2   \ d{\mu_{\Gamma\backslash \mathscr G}}(\Gamma x)  \dalpha \right)^{1/2}\\
&=\|\mc Z f\| \|\mc Z \varphi_t\|=\|f\|\|\varphi_t\|<\infty.
\end{align*}
Similarly,  $\eta_t \in L^1 (\widehat \Gamma).$  Then for each $t \in \mc N,$ the inverse Fourier transform $\check{\zeta_t}$ and $\check{\eta_t}$  are   members of $L^2 (\Gamma),$ where 
\begin{align*}
\check{\zeta_t}(\gamma)=\int_{\widehat\Gamma}\zeta_t(\alpha)   \alpha(\gamma)\ \dalpha \ \mbox{and} \ \check{\eta_t} (\gamma)=\int_{\widehat\Gamma}\eta_t(\alpha)  \alpha(\gamma) \  \dalpha.
\end{align*} 
This follows by   observing the Bessel property of $\mc E^{\Gamma}(\mc A)$ and calculations
\begin{align*}
&\infty>\int_{\mc N}\int_{\Gamma}\left | \langle f, L_{\gamma}\varphi_t\right \rangle |^2 \ d{\mu_{\Gamma}}(\gamma) 
=\int_{\mc N}\int_{\Gamma}\left | \int_{\widehat{\Gamma}}\langle \mc Zf(\alpha), \mc Z(L_{\gamma}\varphi_t)(\alpha) \rangle \ d{\mu_{\widehat \Gamma}}(\alpha)\right|^2 \  d{\mu_{\Gamma}}(\gamma) \ d{\mu_{\mc N}}(t)\\
&=\int_{\mc N}\int_{\Gamma}\left | \int_{\widehat{\Gamma}}\langle \mc Zf(\alpha), \mc Z\varphi_t(\alpha) \rangle \alpha(\gamma) \ d{\mu_{\widehat \Gamma}}(\alpha)\right|^2 \ d{\mu_{\Gamma}}(\gamma)\  d{\mu_{\mc N}}(t)
=\int_{\mc N}\int_{\Gamma}|\check{\zeta_t} (\gamma)|^2 \ d{\mu_{\Gamma}}(\gamma)\  d{\mu_{\mc N}}(t).
\end{align*}
Similarly, we have $\check{\eta_t} \in L^2 (\Gamma).$ Therefore, the equation (\ref{eq:Gamma to widehat Gamma}) is equal to the following  
 \begin{align*}
\int_{\mc N}\int_{\Gamma}\check{\zeta_t}(\gamma)\ol{\check{\eta_t}(\gamma)}\ d{\mu_{\Gamma}}(\gamma)\ d{\mu_{\mc N}}(t)
 =\int_{\mc N}\int_{ \widehat \Gamma }\eta_t(\alpha)\ol{\zeta_t(\alpha)} \ \dalpha\ d{\mu_{\mc N}}(t).
\end{align*}
Thus the result follows. 
\end{proof}
\begin{proof}  (i)  Firstly  assume $\mc E^{\Gamma}(\mc A')$  is an $\mc S^{\Gamma}(\mc A)$-subspace dual to $\mc E^\Gamma(\mc A).$ Then for $f\in \mc S^\Gamma(\mc A)$ and $g\in L^2(\G),$ we have  $
		 \int_{t\in\mc N}\int_{\gamma\in \Gamma}\langle f, L_\gamma\psi_t\rangle \langle L_\gamma\varphi_t,g\rangle\  d{\mu_{\Gamma}}(\gamma)\ d{\mu_{\mc N}}(t)= \langle f, g\rangle.  
$
Equivalently,  we get the following  by applying  Proposition \ref{P:BesselCharecterization} and  the  Zak transform $\mc Z$:
		\begin{align*}
\int_{\mc N}\int_{\widehat {\Gamma}}\left\langle  \mc Zf(\alpha), \mc Z\varphi_t(\alpha)\right\rangle\left \langle \mc Z\psi_t(\alpha),\mc Zg(\alpha)\right \rangle\ d{\mu_{\widehat \Gamma}}(\alpha)\ d{\mu_{\mc N}}(t)=\int_{\widehat \Gamma}\langle \mc Zf(\alpha),\mc Zg(\alpha)\rangle\ d{\mu_{\widehat \Gamma}}(\alpha). 
	\end{align*} 
	To get the result for a.e. $\alpha\in \widehat \Gamma,$  we   show $	\int_{\mc N}\left\langle  \mc Zf(\alpha), \mc Z\varphi_t(\alpha)\right\rangle\left \langle \mc Z\psi_t(\alpha),\mc Zg(\alpha)\right \rangle d{\mu_{\mc N}}(t)=\langle \mc Zf(\alpha),\mc Zg(\alpha)\rangle $  for 	$f\in \mc S^\Gamma(\mc A)$ and $ g\in L^2(\G).$  	On the contrary, we assume a Borel measurable subset $Y$ in  $\widehat{\Gamma}$ having positive measure such that the equality does not hold  on $Y.$  Then, there are $m_0, n_0 \in \mathbb N$ such that  $S_{m_0, n_0} \cap Y$ is a Borel measurable subset  of $\widehat{\Gamma}$ having positive measure, where  for each $m,n \in \mathbb N,$ the  set $S_{m, n}$ is 
	\begin{align*}
S_{m, n}=\left\{	\alpha\in \widehat \Gamma:\rho_{m, n} (\alpha):=\int_{\mc N}\left\langle P_{J_{\mc A}}(\alpha)x_m, \mc Z\varphi_t(\alpha)\right\rangle\left \langle \mc Z\psi_t(\alpha), x_n\right \rangle d{\mu_{\mc N}}(t)-\langle P_{J_{\mc A}}(\alpha)x_m,x_n\rangle \neq \{0\}\right\},
	\end{align*} $P_{J_{\mc A}} (\alpha)$ is  an orthogonal projection onto $J_{\mc A}(\alpha)$ for a.e. $\alpha\in \widehat \Gamma$ and $\{x_n\}_{n \in \mathbb N}$ is   a countable dense subset of $L^2(\Gamma \backslash \G).$
Clearly $\{P_{J_{\mc A}}  (\alpha) x_n\}_{n \in \mathbb N}$ is    dense in $J_{\mc A} (\alpha)$ for a.e. $\alpha \in \widehat \Gamma.$ 
	Hence   either real or imaginary parts of $\rho_{m_0, n_0}(\alpha)$ are strictly positive or negative for  a.e. $\alpha \in S_{m_0, n_0} \cap Y.$ By adopting the standard technique first we assume   the real part of $\rho_{m_0, n_0}(\alpha)$ is strictly positive on $S_{m_0, n_0} \cap Y.$ By choosing   a Borel measurable subset $S$ of $S_{m_0, n_0} \cap Y$ having positive measure, we define functions $h_1$ and $h_2$ as follows:  $h_1(\alpha)=\begin{cases} 
	P_{J_{\mc A}} (\alpha) x_{m_0} & \mbox{for } \alpha \in S,\\
	0 & \mbox{for } \alpha \in \widehat{\Gamma} \backslash S,
	\end{cases}	$ and $	h_2(\alpha)=\begin{cases} 
	P_{J_{\mc A}} (\alpha) x_{n_0} & \mbox{for } \alpha \in S,\\
	0 & \mbox{for } \alpha \in \widehat \Gamma \backslash S.
	\end{cases}$\\
	Then we have $h_1 (\alpha), h_2 (\alpha)\in J_{\mc A} (\alpha)$  for a.e. $\alpha \in \widehat{ \Gamma}$ since $\{P_{J_{\mc A}}  (\alpha) x_n\}_{n \in \mathbb N}$ is    dense in $J_{\mc A} (\alpha).$ Hence we get $h_1, h_2 \in \mc S^{\Gamma}(\mc A)$ which gives  	 	$\int_S   \rho_{m_0, n_0}(\alpha)    \    \dalpha =0.$ We  arrive on the contradiction since the measure of $S$ is positive and the real part of $\rho_{m_0, n_0}(\alpha)$ is strictly positive on $S.$ Other cases follow in a similar way.  Thus the result follows. 
	
	The converse part follows easily  using the Proposition \ref{P:BesselCharecterization}.

\noindent (ii) For $f\in \mc S^\Gamma(\mc A)$ and $g\in  L^2(\G),$  first assume  $\int_{t\in\mc N}\int_{\gamma\in \Gamma}\langle f, L_\gamma\psi_t\rangle \langle L_\gamma\varphi_t,g\rangle\ d{\mu_{\Gamma}}(\gamma)\  d{\mu_{\mc N}}(t)=0,$ which is equivalent to $
\int_{\mc N}\int_{\widehat {\Gamma}}\left\langle  \mc Zf(\alpha), \mc Z\varphi_t(\alpha)\right\rangle\left \langle \mc Z\psi_t(\alpha),\mc Zg(\alpha)\right \rangle\  d{\mu_{\widehat \Gamma}}(\alpha)\ d{\mu_{\mc N}}(t)=0,
$
from Proposition \ref{P:BesselCharecterization}. 
To get the result for a.e. $\alpha\in \widehat \Gamma,$  we   need to show $	\int_{\mc N}\left\langle  \mc Zf(\alpha), \mc Z\varphi_t(\alpha)\right\rangle\left \langle \mc Z\psi_t(\alpha),\mc Zg(\alpha)\right \rangle d{\mu_{\mc N}}(t)=0$   for 	$f\in \mc S^\Gamma(\mc A)$ and $ g\in L^2(\G).$ For this, let $(e_i)_{i\in \mathbb Z}$ be an  orthonormal basis for $L^2(\Gamma\backslash \G)$ and  $P_{J_{\mc A}} (\alpha)$ is  an orthogonal projection onto $J_{\mc A}(\alpha)$ for a.e. $\alpha\in \widehat \Gamma.$ Assume on the contrary,  there exists $i_0\in \mathbb Z$ such that 
$h(\alpha)=\int_{\mc N} \langle P(\alpha) e_{i_0}, \mc Z\varphi_t(\alpha)\rangle\ol{  \langle\mc Z\psi_t(\alpha) ,  \mc Zg (\alpha)\rangle}\ d{\mu_{\mc N}}(t)\neq 0$ on a measurable set $E\ss \widehat{\Gamma}$ with $\mu_{\widehat \Gamma}(E)>0.$
The rest of the proof follows in a similar   manner  of Theorem \ref{T:dualityMultiGenerators}  (ii). The converse part follows immediately by Proposition \ref{P:BesselCharecterization}.
	\end{proof}
For the case of  $\mc S^\Gamma(\mc A)=\mc S^\Gamma(\mc A')$ in Theorem \ref{Th: Dual-Ortho-Non Discerete}, we get   $J_{\mc A'(\alpha)}=J_{\mc A}(\alpha)$ a.e. $\alpha\in \widehat \Gamma,$  follows by observing the bijection of $J\mapsto V_J$ and $V_J=\mc S^\Gamma(\mc A)=\mc S^\Gamma(\mc A').$	 Then we have following result. 
\begin{cor} \label{T:dual-orthogonal-frame-nondiscrete}
	Under the   hypotheses mentioned in Theorem \ref{Th: Dual-Ortho-Non Discerete} let us assume $\mc S^\Gamma(\mc A)=\mc S^\Gamma(\mc A').$ 
	  \begin{enumerate}
		\item[(i)]$\mc E^{\Gamma}(\mc A')$  and   $\mc E^{\Gamma}(\mc A)$ are dual frame to each other  if and only if for a.e. $\alpha \in \widehat{\Gamma},$ the system $(\mc Z \mc A') (\alpha)=\{\mc Z\psi(\alpha):\psi\in\mc A\} $ and  $(\mc Z \mc A) (\alpha)=\{\mc Z\varphi(\alpha):\varphi\in\mc A\}$ are dual to each other. 
		\item[(ii)] $\mc E^{\Gamma}(\mc A')$  and   $\mc E^{\Gamma}(\mc A)$ are orthogonal pair  if and only if for a.e. $\alpha \in \widehat{\Gamma},$ the system $(\mc Z \mc A') (\alpha)$ and  $(\mc Z \mc A) (\alpha)$ are orthogonal pair.  
	\end{enumerate} 
\end{cor}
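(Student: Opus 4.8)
The plan is to read this off from Theorem~\ref{Th: Dual-Ortho-Non Discerete} together with the observation, already recorded just before the statement, that the hypothesis $\mc S^{\Gamma}(\mc A)=\mc S^{\Gamma}(\mc A')$ forces $J_{\mc A}(\alpha)=J_{\mc A'}(\alpha)$ for a.e.\ $\alpha\in\widehat{\Gamma}$ via the bijection $J\mapsto V_J$; I will write $J(\alpha)$ for this common fiber subspace of $L^2(\Gamma\backslash\G)$. I would also lean on two standard range-function facts \cite{bownik2015structure, iverson2015subspaces}: since $\mc E^{\Gamma}(\mc A)$ and $\mc E^{\Gamma}(\mc A')$ are Bessel in $L^2(\G)$, the fibered systems $(\mc Z\mc A)(\alpha)$ and $(\mc Z\mc A')(\alpha)$ are Bessel in $L^2(\Gamma\backslash\G)$ for a.e.\ $\alpha$, and each of them has closed linear span equal to $J(\alpha)$ for a.e.\ $\alpha$. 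The one purely formal remark I would isolate first is this: whenever $\mc X,\mc Y$ are Bessel with $\ol{\Span}\,\mc X=\ol{\Span}\,\mc Y=:\mc K$ and $T_{\mc X}^{*}T_{\mc Y}$ is the identity on $\mc K$, then taking adjoints shows $T_{\mc Y}^{*}T_{\mc X}$ is the identity on $\mc K$ as well, and Cauchy--Schwarz applied to $\|f\|^{2}=\langle T_{\mc Y}f,T_{\mc X}f\rangle$ gives a lower frame bound for each of $\mc X,\mc Y$ on $\mc K$; hence under these hypotheses ``$\mc X,\mc Y$ are dual frames to each other'' is equivalent to the single one-sided requirement ``$\mc Y$ is a $\mc K$-subspace dual to $\mc X$''. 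This same equivalence holds verbatim inside $L^{2}(\Gamma\backslash\G)$ at a.e.\ fixed $\alpha$, since there the relevant systems are Bessel and share the closed span $J(\alpha)$.

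For part~(i): by the remark, ``$\mc E^{\Gamma}(\mc A')$ and $\mc E^{\Gamma}(\mc A)$ are dual frames to each other'' is equivalent to ``$\mc E^{\Gamma}(\mc A')$ is an $\mc S^{\Gamma}(\mc A)$-subspace dual to $\mc E^{\Gamma}(\mc A)$''. By Theorem~\ref{Th: Dual-Ortho-Non Discerete}(i) this is equivalent to: for a.e.\ $\alpha\in\widehat{\Gamma}$, the system $(\mc Z\mc A')(\alpha)$ is a $J(\alpha)$-subspace dual to $(\mc Z\mc A)(\alpha)$. Applying the remark once more, now fiberwise, this is in turn equivalent to: for a.e.\ $\alpha$, $(\mc Z\mc A')(\alpha)$ and $(\mc Z\mc A)(\alpha)$ are dual frames to each other, which is the asserted condition. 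For part~(ii): ``$\mc E^{\Gamma}(\mc A')$ and $\mc E^{\Gamma}(\mc A)$ form an orthogonal pair'' means, by definition, $T_{\mc E^{\Gamma}(\mc A)}^{*}T_{\mc E^{\Gamma}(\mc A')}=0$ on $\mc S^{\Gamma}(\mc A)=\mc S^{\Gamma}(\mc A')$, i.e.\ $\mc E^{\Gamma}(\mc A')$ is an $\mc S^{\Gamma}(\mc A)$-subspace orthogonal to $\mc E^{\Gamma}(\mc A)$. By Theorem~\ref{Th: Dual-Ortho-Non Discerete}(ii) this holds iff for a.e.\ $\alpha$ the system $(\mc Z\mc A')(\alpha)$ is a $J_{\mc A}(\alpha)$-subspace orthogonal to $(\mc Z\mc A)(\alpha)$; since $J_{\mc A}(\alpha)=J(\alpha)$ is simultaneously the common closed span of the two fibers a.e., this says precisely that $(\mc Z\mc A')(\alpha)$ and $(\mc Z\mc A)(\alpha)$ form an orthogonal pair a.e.

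In short, both equivalences are obtained by sandwiching the fiberwise characterizations of Theorem~\ref{Th: Dual-Ortho-Non Discerete} between the (macroscopic and fiberwise) reformulations of ``dual frames'' and ``orthogonal pair''. The only step that is not pure bookkeeping is the measure-theoretic transfer invoked in the first paragraph --- that Bessel-ness and the identification of the closed spans of $(\mc Z\mc A)(\alpha)$, $(\mc Z\mc A')(\alpha)$ with $J(\alpha)$ genuinely pass to a.e.\ fibers, not merely to the fibers of a fixed countable dense subfamily --- and that is exactly where I would quote the superfiberization/range-function machinery of \cite{bownik2015structure, iverson2015subspaces}; I expect this to be the only point requiring care.
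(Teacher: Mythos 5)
Your proposal is correct and follows essentially the same route as the paper, which gives no explicit proof beyond the observation (recorded just before the statement) that $\mc S^\Gamma(\mc A)=\mc S^\Gamma(\mc A')$ forces $J_{\mc A}(\alpha)=J_{\mc A'}(\alpha)$ a.e.\ and that the claim then follows from Theorem \ref{Th: Dual-Ortho-Non Discerete}. Your adjoint/Cauchy--Schwarz remark upgrading the one-sided subspace-dual condition to the symmetric ``dual frames to each other'' statement, and your flagging of the a.e.-fiber identification of closed spans, simply supply details the paper leaves implicit.
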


\subsection{Super dual frames}
Orthogonality is a fundamental idea that plays a significant role in the discussion of the dual frame related property of super-frames in orthogonal direct sum of Hilbert spaces. This concept was first presented by Han and Larson \cite{han2000frames} and Balan \cite{balan2000multiplexing}, who developed it further. This notion is further generalized  in the context of TI and Gabor systems  \cite{li2013super, lopez2013discrete}.
 By \textit{super Hilbert space} $L^2(\G)\oplus\dots\oplus L^2(\G)$ (N-copies), or $\oplus^N L^2(\G),$ we mean it is a collection of functions of the form   $\{\oplus_{n=1}^N f^{(n)} :=(f^{(1)}, f^{(2)}, \cdots,   f^{(N)}): f^{(n)}\in L^2(\G), 1 \leq n \leq N\}$ with the inner product $\langle \oplus_{n=1}^Nf^{(n)},\oplus_{n=1}^Ng^{(n)}\rangle=\sum_{n=1}^N \langle f^{(n)},g^{(n)}\rangle.$  Indeed,   	$\oplus^N L^2(\G)$ is nothing but the Hilbert space $L^2(\G \times \mathbb Z_N),$ where $\mathbb Z_N$ is an abelian group with modulo $N.$
Analogous to the classical trend, we state the following  characterization result for   dual frames of translates  in  the super Hilbert space $\oplus^N L^2(\G)$ (named as super dual frame).

\begin{thm}\label{T:Superdual}	Let $N\in \mathbb N$ and $\mc N$ be an $\sigma$-finite measure space with counting measure. For 	$1\leq n\leq N,$  let $\{\varphi_t^{(n)}\}_{t\in \mc N}$ and $\{\psi_t^{(n)}\}_{t\in \mc N}$ be two collections of functions in $L^2(\G)$  such   that	$\{L_\gamma \varphi_t^{(n)}\}_{t \in \mc N,\gamma \in \Gamma}$ and $\{L_\gamma \psi_t^{(n)}\}_{t\in \mc N, \gamma \in \Gamma}$ are Bessel. For each $\gamma\in \Gamma,$ define the translation operator $\mc L_\gamma:= \oplus^N L_\gamma $ which acts on  an element  $\oplus_{n=1}^N f^{(n)}$ by $\mc L_\gamma(\oplus_{n=1}^N f^{(n)})= \oplus_{n=1}^N L_\gamma f^{(n)}.$ Then  $\{\mc L_\gamma (\oplus_{n=1}^N \varphi_t^{(n)})\}_{t \in \mc N,\gamma \in \Gamma}$ and $\{\mc L_\gamma (\oplus_{n=1}^N \psi_t^{(n)})\}_{t \in \mc N,\gamma \in \Gamma}$ are   (super) dual frames     in $\oplus^N L^2(\G)$  if and only if  for a.e. $\alpha \in \widehat \Gamma,$  (i)     the systems 
	$\left\{\{\mc Z \varphi_t^{(n)} (\alpha, \Gamma x)\}_{\Gamma x \in \Gamma \backslash\G}\right\}_{t \in \mc N}$ and   	$\left\{\{\mc Z \psi_t^{(n)} (\alpha, \Gamma x)\}_{\Gamma x \in \Gamma \backslash\G}\right\}_{t \in \mc N}$ are dual frames  in $L^2 (\Gamma \backslash \G)$  for   $1\leq n\leq N,$ and   
	(ii) $\left\{\{\mc Z \varphi_t^{(n_1)} (\alpha, \Gamma x)\}_{\Gamma x \in \Gamma \backslash\G}\right\}_{t \in \mc N}$ and   	$\left\{\{\mc Z \psi_t^{(n_2)} (\alpha, \Gamma x)\}_{\Gamma x \in \Gamma \backslash\G} : t \in \mc N\right\}$ are orthogonal  pair for $1\leq n_1\neq n_2\leq N.$ 
\end{thm}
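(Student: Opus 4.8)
The plan is to split the asserted super-duality into the ``diagonal'' and ``cross'' relations among the generators and then feed these into Corollary~\ref{T:dual-orthogonal-frame-nondiscrete}. Write $\Phi_t:=\oplus_{n=1}^N\varphi_t^{(n)}$ and $\Psi_t:=\oplus_{n=1}^N\psi_t^{(n)}$. Since for $F=\oplus_{n=1}^N f^{(n)}$ one has $\langle F,\mc L_\gamma\Psi_t\rangle=\sum_{n=1}^N\langle f^{(n)},L_\gamma\psi_t^{(n)}\rangle$, the Cauchy--Schwarz and triangle inequalities together with the $N$ componentwise Bessel bounds show that $\{\mc L_\gamma\Phi_t\}_{t,\gamma}$ and $\{\mc L_\gamma\Psi_t\}_{t,\gamma}$ are Bessel in $\oplus^N L^2(\G)$. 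Hence, by the usual argument (if $T^*_{\mc X}T_{\mc Y}=I$ on a Hilbert space and both families are Bessel, then $\|F\|^2=\langle T_{\mc Y}F,T_{\mc X}F\rangle\le\|T_{\mc Y}F\|\,\|T_{\mc X}F\|$ supplies the missing lower frame bounds), the two families are super dual frames in $\oplus^N L^2(\G)$ exactly when the reproducing formula
\[
F=\int_{\mc N}\sum_{\gamma\in\Gamma}\langle F,\mc L_\gamma\Psi_t\rangle\,\mc L_\gamma\Phi_t\,\dn\qquad\text{for all }F\in\oplus^N L^2(\G)
\]
holds (recall $\mu_{\mc N}$ is the counting measure, so the $t$-integral is a sum).

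The key step is to decouple this identity slotwise. Evaluating it on vectors $F$ whose only nonzero slot is the $n_0$-th, say $F=(0,\dots,f,\dots,0)$ with $f\in L^2(\G)$ in position $n_0$, and reading off the $m$-th slot yields
\[
\delta_{m,n_0}\,f=\int_{\mc N}\sum_{\gamma\in\Gamma}\langle f,L_\gamma\psi_t^{(n_0)}\rangle\,L_\gamma\varphi_t^{(m)}\,\dn\qquad\text{for all }f\in L^2(\G),\ 1\le m,n_0\le N;
\]
conversely, summing these identities over $n_0$ reconstitutes the full reproducing formula applied to an arbitrary $F$. Thus super-duality is equivalent to the conjunction of: (a) for each $n$, the $\Gamma$-TG system $\mc E^\Gamma(\{\psi_t^{(n)}\})$ reproduces every $f\in L^2(\G)$ through $\mc E^\Gamma(\{\varphi_t^{(n)}\})$ (the diagonal terms $m=n_0=n$), and (b) for $n_1\ne n_2$, the mixed dual Gramian of $\mc E^\Gamma(\{\varphi_t^{(n_1)}\})$ and $\mc E^\Gamma(\{\psi_t^{(n_2)}\})$ vanishes on $L^2(\G)$ (the off-diagonal terms). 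Observe that (a) alone forces $\mc E^\Gamma(\{\varphi_t^{(n)}\})$ and $\mc E^\Gamma(\{\psi_t^{(n)}\})$ to be complete in $L^2(\G)$ (the right-hand side always lies in the relevant closed span), so, being Bessel with a reproducing Bessel partner, each is in fact a frame for the whole space $L^2(\G)$; in particular $\mc S^\Gamma(\{\varphi_t^{(n)}\})=L^2(\G)=\mc S^\Gamma(\{\psi_t^{(n)}\})$ for every $n$.

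Now apply Corollary~\ref{T:dual-orthogonal-frame-nondiscrete}: since $\mc N$ carries the counting measure, its hypotheses (inherited from Theorem~\ref{Th: Dual-Ortho-Non Discerete}) reduce to the coincidence of the generated $\Gamma$-TI spaces, which we have just verified. For a fixed $n$, part~(i) of that corollary converts (a) into: for a.e.\ $\alpha\in\widehat\Gamma$, the families $\{\mc Z\varphi_t^{(n)}(\alpha)\}_{t\in\mc N}$ and $\{\mc Z\psi_t^{(n)}(\alpha)\}_{t\in\mc N}$ are dual frames in $L^2(\Gamma\backslash\G)$. For $n_1\ne n_2$, since the diagonal conditions make both $\mc E^\Gamma(\{\varphi_t^{(n_1)}\})$ and $\mc E^\Gamma(\{\psi_t^{(n_2)}\})$ frames for $L^2(\G)$ (hence with the same closed span $L^2(\G)$), part~(ii) converts (b) into: for a.e.\ $\alpha$, $\{\mc Z\varphi_t^{(n_1)}(\alpha)\}_{t\in\mc N}$ and $\{\mc Z\psi_t^{(n_2)}(\alpha)\}_{t\in\mc N}$ form an orthogonal pair. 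There being only finitely many ($N+N(N-1)$) such conditions, the union of their exceptional null sets in $\widehat\Gamma$ is still null, so ``for a.e.\ $\alpha$ all of (i) and (ii) hold'' is the conjunction of the individual a.e.\ statements; this is precisely the asserted characterization. The converse direction is the same chain read backwards: the fiberwise conditions force, a.e.\ $\alpha$, uniform frame bounds on the fibers (the upper ones are the a.e.\ Bessel bounds of the global systems, the lower ones come from $\|v\|\le\sqrt{B}\,\|T_{\varphi^{(n)}(\alpha)}v\|$ with $B$ the uniform Bessel bound of the dual generators), hence the componentwise $\Gamma$-TG systems are frames for $L^2(\G)$, the equal-span hypothesis of Corollary~\ref{T:dual-orthogonal-frame-nondiscrete} is again met, and the corollary returns (a) and (b), which assemble back into super-duality.

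The one genuinely delicate point is this bookkeeping around completeness and uniformity: one must check that the diagonal reproducing formula on all of $L^2(\G)$ truly upgrades a Bessel system into a frame for the \emph{entire} space (so that Corollary~\ref{T:dual-orthogonal-frame-nondiscrete}, which presumes the two $\Gamma$-TI spaces coincide, is applicable), and, in the reverse implication, that the fiberwise frame bounds may be chosen independent of $\alpha$. Everything else is the routine slotwise decoupling of a super-frame identity into its diagonal and cross components. \emph{Alternatively}, one may avoid the decoupling altogether: identify $\oplus^N L^2(\G)$ with $L^2(\G\times\mathbb Z_N)$ and $\mc L_\gamma$ with left translation by $(\gamma,0)$, so that $\Gamma$ is a closed discrete abelian subgroup of $\G\times\mathbb Z_N$, $\Gamma\backslash(\G\times\mathbb Z_N)\cong(\Gamma\backslash\G)\times\mathbb Z_N$, and the Zak transform for the pair $(\G\times\mathbb Z_N,\Gamma)$ carries $\Phi_t$ to $\alpha\mapsto(\mc Z\varphi_t^{(1)}(\alpha),\dots,\mc Z\varphi_t^{(N)}(\alpha))\in\oplus^N L^2(\Gamma\backslash\G)$; Corollary~\ref{T:dual-orthogonal-frame-nondiscrete} then reduces super-duality to fiberwise duality in $\oplus^N L^2(\Gamma\backslash\G)$, after which the elementary fact that a dual pair in a finite orthogonal sum of Hilbert spaces is characterized by componentwise duality together with cross-orthogonality completes the proof.
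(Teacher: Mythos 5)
Your proof is correct and follows essentially the same route as the paper's: decouple the super reproducing formula slotwise via the coordinate projections $P_n$ into the $N$ diagonal duality relations and the $N(N-1)$ cross-orthogonality relations, then pass to the fibers with Corollary~\ref{T:dual-orthogonal-frame-nondiscrete}. You are in fact more explicit than the paper about verifying the equal-span hypothesis of that corollary (the diagonal relations force each component system to be a frame for all of $L^2(\mathscr G)$), and your closing identification of $\oplus^N L^2(\mathscr G)$ with $L^2(\mathscr G\times \mathbb Z_N)$ is a neat optional shortcut that the paper only hints at when defining the super Hilbert space.
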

\begin{proof} Assume   the systems $\{\mc L_\gamma (\oplus_{n=1}^N \varphi_t^{(n)})\}_{t \in \mc N,\gamma \in \Gamma}$ and $\{\mc L_\gamma (\oplus_{n=1}^N \psi_t^{(n)})\}_{t \in \mc N,\gamma \in \Gamma}$ are   (super) dual frames     in $\oplus^N L^2(\G).$ Then for each $1\leq n \leq N,$ (i) follows by just applying the orthogonal projection $P_n$  on it  and Corollary  \ref{T:dual-orthogonal-frame-nondiscrete} .  For the part (ii), 	let  $1\leq n_1\neq n_2\leq N$ and  $h\in \oplus^N L^2(\G).$ Then  we have  $P_{n_1}(P_{n_2}h)=0,$  where $P_{n_1}(P_{n_2}h)$  is equal to
{\small	
	\begin{align*}
		\int_{\mc N} \int_{\Gamma}\langle P_{n_2}h, P_{n_2}^*\mc L_\gamma(\oplus_{n=1}^N\varphi_{t}^{(n)})\rangle P_{n_1}(\mc L_\gamma\oplus_{n=1}^N\psi_t^{(n)})  \ d{\mu_\Gamma}(\gamma)\   d{\mu_\mc N}(t)
		=\int_{\mc N} \int_{\Gamma} \langle P_{n_2}h, L_\gamma\varphi_{t}^{(n_2)}\rangle  L_\gamma\psi_t^{(n_1)}\  d{\mu_\Gamma}(\gamma)\   d{\mu_\mc N}(t).
	\end{align*}
}
Hence, $\mc E^\Gamma (\{\varphi_t^{(n_2)}\}_{t\in \mc N})$ and $\mc E^\Gamma (\{\psi_t^{(n_1)}\}_{t\in \mc N})$ are orthogonal pair. Therefore (ii) follows. 
	
	Conversely, let us assume (i) and (ii) hold. Then notice that the both $\{\mc L_\gamma (\oplus_{n=1}^N \varphi_t^{(n)})\}_{t \in \mc N,\gamma \in \Gamma}$ and $\{\mc L_\gamma (\oplus_{n=1}^N \psi_t^{(n)})\}_{t \in \mc N,\gamma \in \Gamma}$  are Bessel families in    $\oplus^N L^2(\G),$ follows by the below calculations for   $h\in L^2(\G)^N$ using  the Bessel property of $\{L_\gamma \varphi_t^{(n)}\}_{t \in \mc N,\gamma \in \Gamma}$  with Bessel  bound $B^{(n)}$:
{\small	\begin{align*}
	\int_{\mc N} \int_{\Gamma} & |\langle h, \mc L_\gamma(\oplus_{n=1}^N \varphi_{t}^{(n)})\rangle |^2\ d{\mu_\Gamma}(\gamma) \  d{\mu_\mc N}(t)  =\int_{\mc N} \int_{\Gamma}  |\langle \oplus_{n=1}^{N}P_n h, \mc L_\gamma(\oplus_{n=1}^N \varphi_{t}^{(n)})\rangle|^2 \ d{\mu_\Gamma}(\gamma)\   d{\mu_\mc N}(t)\\
&	=\int_{\mc N} \int_{\Gamma} \Big|\sum_{n=1}^N\langle P_n h,  L_\gamma \varphi_t^{(n)}\rangle\Big|^2\ d{\mu_\Gamma}(\gamma)  \ d{\mu_\mc N}(t)
	\leq C   \|h\|^2 \sum_{n=1}^N B^{(n)},
	\end{align*}}for some constant $C>0$ (similarly for $\{L_\gamma \psi_t^{(n)}\}_{t \in \mc N,\gamma \in \Gamma}$ ).   Thus  we have the result using Theorem \ref{T:dual-orthogonal-frame-nondiscrete}, by just  looking the  reproducing formula  for  each $h\in \oplus^N L^2(\G)$  and writing $h=\oplus_{n=1}^N P_nh$ in the below calculations:
{\small	\begin{align*}
			&	\int_{\mc N} \int_{\Gamma} \langle h,  \mc L_\gamma(\oplus_{n=1}^{N}\psi_t^{(n)})\rangle \mc L_\gamma(\oplus_{n=1}^N \varphi_{t}^{(n)}) \ d{\mu_\Gamma}(\gamma) \ d{\mu_\mc N}(t) =	\int_{\mc N} \int_{\Gamma}  \sum_{n=1}^{N}\langle P_nh,   L_\gamma\psi_t^{(n)}\rangle \mc L_\gamma(\oplus_{n=1}^N \varphi_{t}^{(n)})\  d{\mu_\Gamma}(\gamma)\  d{\mu_\mc N}(t)\\
		&	=	\int_{\mc N} \int_{\Gamma} \sum_{n=1}^{N} \langle P_nh,   L_\gamma\psi_t^{(n)}\rangle  L_\gamma \varphi_{t}^{(1)}\ d{\mu_\Gamma}(\gamma)  \ d{\mu_\mc N}(t)\oplus\cdots\oplus \int_{\mc N} \int_{\Gamma} \sum_{n=1}^N \langle P_nh,   L_\gamma\psi_t^{(n)}\rangle  L_\gamma \varphi_{t}^{(N)}\ d{\mu_\Gamma}(\gamma)  \ d{\mu_\mc N}(t)\\
		&=P_1h\oplus\dots\oplus P_Nh=h.
	\end{align*}}	

\end{proof}

\section{Applications}\label{App}
In this section, we explore how our findings can be put to use. Since there was  always  an attraction of researches to find various properties of Gabor systems, for instance   \cite{arefijamaal2013continuous, bownik2007SMI, christensen2016introduction, han2000frames, Cabrelli2015SMI, iverson2015subspaces, jakobsen2016co} and references therein, firstly we focus on the Gabor system.

\subsection{Gabor System}  Let $\mc G$ be  a second countable LCA group  having    a closed subgroup $\Lambda.$ Then for  a  family of functions     
	$\mc A=\{\varphi_t:t\in\mc N\}$   in $L^2 (\mc G),$   a \textit{Gabor system}   $G (\mc A, \Lambda, \Lambda^\perp)$ is   
	$$
	G(\mc A, \Lambda, \Lambda^\perp)=\left\{ L_{\lambda} E_\omega \varphi_t : \lambda \in \Lambda, \omega\in \Lambda^\perp, t\in \mc N\right\}, 
	$$ 
	where $\mc N$ is an $\sigma$-finite measure space, and for $\omega \in \widehat  {\mc G},$     the \textit{modulation operator}   $E_{\omega}$ on $L^2(\mc G)$   is defined by $ (E_{\omega}f)(x)=\omega(x)f(x),$  $x\in \mc G, \ f \in L^2 (\mc G).$  We denote $\mc S(\mc A, \Lambda, \Lambda^\perp) :=\overline{\Span}   G(\mc A, \Lambda, \Lambda^\perp).$

In case of  $\mc N$ having counting  measure and  discrete subgroup  $\Lambda,$ the following result is established for the pair $(\mc G, \Lambda)$ by observing the Gabor system $G (\mc A, \Lambda, \Lambda^\perp)$ as   an $\Lambda$-TG system $\mc E^\Lambda (\tilde{\mc A}),$ where  $\tilde{\mc A}=\{E_{\omega} \varphi: \varphi \in \mc A, \omega\in \Lambda^\perp\}.$ The similar results  can be deduced for the  case of uniform lattice $\Lambda$ in $\mc G,$ in particular,  $\mathbb Z^m$ in $\mathbb R^m.$ The following result has a predecessor \cite[Proposition 3.6]{gumber2019pairwisebuletin}.
	
	\begin{thm}\label{T:Gaborcountable} Let $\mc A=\{\varphi_t\}_{t\in \mc N}$ and  $\mc A'=\{\psi_t\}_{t\in \mc N}$ be   sequences  in $L^2(\mc G)$ such that   the Gabor systems   $G(\mc A, \Lambda, \Lambda^\perp)$ and $G(\mc A', \Lambda, \Lambda^\perp)$ are Bessel, where $\Lambda$ is a closed discrete subgroup of an LCA group $\mc G.$   Then     $G(\mc A', \Lambda, \Lambda^\perp)$ is an $\mc S(\mc A, \Lambda, \Lambda^\perp)$-subspace    dual to $G(\mc A, \Lambda, \Lambda^\perp))$ if and only if for all $t' \in \mc N,$  
			$$   
			\mc Z\varphi_{t'} (\beta, x\Lambda)=\sum_{t\in \mc N} [\varphi_{t'},  \psi_t ](\beta) \mc Z\varphi_t (\beta, x\Lambda) \ \mbox{for}\ x\Lambda \in \mc G/\Lambda\ \mbox{and}\ a.e. \ \beta \in \widehat\Lambda.		$$
			In particular, $G(\{\psi\}, \Lambda, \Lambda^\perp)$ is an $\mc S(\{\varphi\}, \Lambda, \Lambda^\perp)$-subspace dual (orthogonal) to $G(\{\varphi\}, \Lambda, \Lambda^\perp)$ if and only if  $[\varphi,  \psi] (\beta)=1$ ($[\varphi,  \psi](\beta)=0$) for a.e. $\beta \in \Omega_{ \varphi}.$
			 	\end{thm}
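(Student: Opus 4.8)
The plan is to recognize the Gabor system $G(\mc A,\Lambda,\Lambda^\perp)$ as a $\Lambda$-translation generated system and then invoke Theorem \ref{T:dualityMultiGenerators} (or rather its singly/multiply generated consequences). Concretely, set $\tilde{\mc A}=\{E_\omega\varphi_t:\varphi_t\in\mc A,\ \omega\in\Lambda^\perp\}$ and $\tilde{\mc A'}=\{E_\omega\psi_t:\psi_t\in\mc A',\ \omega\in\Lambda^\perp\}$. Since $\Lambda^\perp$ is countable when $\Lambda$ is discrete and co-compact (and more generally $\mc N\times\Lambda^\perp$ is a $\sigma$-finite measure space with counting measure), the Gabor system $G(\mc A,\Lambda,\Lambda^\perp)=\{L_\lambda E_\omega\varphi_t\}$ is literally the $\Lambda$-TG system $\mc E^\Lambda(\tilde{\mc A})$, and $\mc S(\mc A,\Lambda,\Lambda^\perp)=\mc S^\Lambda(\tilde{\mc A})$. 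The Bessel hypothesis on $G(\mc A,\Lambda,\Lambda^\perp)$ is exactly the Bessel hypothesis on $\mc E^\Lambda(\tilde{\mc A})$, so Theorem \ref{T:dualityMultiGenerators}(i) applies directly to the index set $\mc N\times\Lambda^\perp$.

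First I would write out what Theorem \ref{T:dualityMultiGenerators}(i) says for the reindexed family: $\mc E^\Lambda(\tilde{\mc A'})$ is an $\mc S^\Lambda(\tilde{\mc A})$-subspace dual to $\mc E^\Lambda(\tilde{\mc A})$ if and only if for every generator $E_{\omega'}\varphi_{t'}$ one has
\[
\mc Z(E_{\omega'}\varphi_{t'})(\beta,x\Lambda)=\sum_{t\in\mc N}\sum_{\omega\in\Lambda^\perp}[E_{\omega'}\varphi_{t'},E_\omega\psi_t](\beta)\,\mc Z(E_\omega\varphi_t)(\beta,x\Lambda)
\]
for a.e.\ $\beta$ and all $x\Lambda$. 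The second step is to simplify this using the intertwining behaviour of the Zak transform under modulation by characters in $\Lambda^\perp$: since $\omega\in\Lambda^\perp$ is trivial on $\Lambda$, the map $E_\omega$ acts on $\mc Z f(\beta,\cdot)$ essentially as multiplication by $\omega$ evaluated at the coset representative, i.e.\ it only relabels the $x\Lambda$-fiber and leaves the $\widehat\Lambda$-variable $\beta$ untouched. Consequently $[E_{\omega'}\varphi_{t'},E_\omega\psi_t](\beta)$ collapses: the bracket is an integral over $\mc G/\Lambda$ of $\mc Z(E_{\omega'}\varphi_{t'})\overline{\mc Z(E_\omega\psi_t)}$, and the characters $\omega',\omega$ contribute a factor $\omega'(\Xi(x\Lambda))\overline{\omega(\Xi(x\Lambda))}$ inside the integral. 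Using that the sum over $\omega\in\Lambda^\perp$ of these phases against the cross-term produces a Dirac-type selection forcing $\omega=\omega'$ (this is the Poisson/Parseval relation for $\widehat{\mc G/\Lambda}\cong\Lambda^\perp$), the double sum reduces to a single sum over $t$ with $\omega=\omega'$, and then the $\omega'$-phase cancels from both sides, leaving exactly
\[
\mc Z\varphi_{t'}(\beta,x\Lambda)=\sum_{t\in\mc N}[\varphi_{t'},\psi_t](\beta)\,\mc Z\varphi_t(\beta,x\Lambda).
\]

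The main obstacle I anticipate is making the reduction of the double sum $\sum_{t}\sum_{\omega}$ to the single sum $\sum_t$ fully rigorous: one must justify interchanging the $\omega$-summation with the $\mc G/\Lambda$-integral defining the bracket, and identify the orthogonality relation $\sum_{\omega\in\Lambda^\perp}\omega(\Xi(x\Lambda))\overline{\omega(\Xi(y\Lambda))}$ with the reproducing kernel on $\mc G/\Lambda$ — this is where the discreteness of $\Lambda$ (equivalently compactness of $\mc G/\Lambda$ in the uniform-lattice case) is really used. The Bessel hypotheses give the $\ell^2$ control needed for these interchanges, as in the proof of Lemma \ref{L:matrixelement}. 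Once the multi-generator statement is in hand, the ``in particular'' clause is immediate: take $\mc N$ a singleton, $\mc A=\{\varphi\}$, $\mc A'=\{\psi\}$, and apply Corollary \ref{C:DualMain}, which characterizes the singly generated subspace dual by $[\varphi,\psi](\beta)=1$ on $\Omega_\varphi$ and the subspace orthogonality by $[\varphi,\psi](\beta)=0$ on $\Omega_\varphi$; the only adjustment is that here the bracket is taken with respect to the pair $(\mc G,\Lambda)$ and the Zak transform is over $\mc G/\Lambda=\Lambda\backslash\mc G$, exactly as in the abelian specialization already recorded in Corollary \ref{C:DualMain}.
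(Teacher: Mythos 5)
Your overall route is the same as the paper's: reindex $G(\mc A,\Lambda,\Lambda^\perp)$ as the $\Lambda$-TG system $\mc E^\Lambda(\tilde{\mc A})$ with $\tilde{\mc A}=\{E_\omega\varphi_t:\ t\in\mc N,\ \omega\in\Lambda^\perp\}$, apply Theorem \ref{T:dualityMultiGenerators}(i) over the index set $\mc N\times\Lambda^\perp$, exploit the intertwining formula $\mc Z(E_\omega f)(\beta,x\Lambda)=\omega(x)\,\mc Zf(\beta,x\Lambda)$, and settle the singly generated clause with Corollary \ref{C:DualMain}. The paper's proof is exactly this, compressed into the single identity $[E_\omega\varphi_t,E_\omega\psi_t](\beta)=[\varphi_t,\psi_t](\beta)$.

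The step where you collapse the double sum $\sum_t\sum_\omega$ to the single sum $\sum_t$ does not work as you justify it. The cross-bracket is $[E_{\omega'}\varphi_{t'},E_\omega\psi_t](\beta)=\int_{\mc G/\Lambda}\omega'(x)\overline{\omega(x)}\,\mc Z\varphi_{t'}(\beta,x\Lambda)\overline{\mc Z\psi_t(\beta,x\Lambda)}\,d\mu_{\mc G/\Lambda}(x\Lambda)$, i.e.\ the Fourier coefficient of $F_{t',t,\beta}:=\mc Z\varphi_{t'}(\beta,\cdot)\overline{\mc Z\psi_t(\beta,\cdot)}$ on the compact group $\mc G/\Lambda$ at the character $\omega(\omega')^{-1}$. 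There is no ``Dirac-type selection forcing $\omega=\omega'$'': those coefficients vanish for every $\omega\neq\omega'$ only if $F_{t',t,\beta}$ is constant on $\mc G/\Lambda$, which is false in general. What Fourier inversion on $\mc G/\Lambda$ actually yields is $\sum_{\omega\in\Lambda^\perp}[E_{\omega'}\varphi_{t'},E_\omega\psi_t](\beta)\,\omega(x)=\omega'(x)\,\mc Z\varphi_{t'}(\beta,x\Lambda)\overline{\mc Z\psi_t(\beta,x\Lambda)}$, so after cancelling $\omega'(x)$ the criterion of Theorem \ref{T:dualityMultiGenerators}(i) becomes the pointwise identity $\mc Z\varphi_{t'}(\beta,x\Lambda)=\sum_t\mc Z\varphi_{t'}(\beta,x\Lambda)\overline{\mc Z\psi_t(\beta,x\Lambda)}\,\mc Z\varphi_t(\beta,x\Lambda)$ in $(\beta,x\Lambda)$ --- consistent with the fiberwise formulation of Theorem \ref{TMI-Alternate Dual Thm} --- and not the bracket condition $\sum_t[\varphi_{t'},\psi_t](\beta)\,\mc Z\varphi_t(\beta,x\Lambda)$ you arrive at. To be fair, the paper's own proof is silent on these off-diagonal terms (it records only the diagonal identity), so you have correctly isolated where the real difficulty sits; but the proposed resolution is not valid, and the same collapse is implicitly needed in the ``in particular'' clause, since even for a single $\varphi$ the reindexed system $\{E_\omega\varphi\}_{\omega\in\Lambda^\perp}$ is multiply generated and Corollary \ref{C:DualMain} does not apply to it directly.
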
	
	\begin{proof} This follows from the Theorem \ref{T:dualityMultiGenerators} due to the relation  $[ E_\omega\varphi_{t}, E_\omega\psi_t](\beta)=	[ \varphi_{t}, \psi_t](\beta)$ for a.e. $\beta \in \widehat{\Lambda}$ and $t, t' \in \mc N,$ since the Zak transform satisfies  the    formula ${\mc Z}(E_{\omega}f)(\beta, x\Lambda) =\omega(x) \mc Zf(\beta, x\Lambda)$   for 
		$\omega \in \Lambda^\perp,$   $(\beta, x\Lambda) \in (\widehat{\Lambda}, \mc G/\Lambda)$ and $ f \in L^2 (\mc G).$ 	The remaining part follows by Corollary \ref{C:DualMain}.	 
	\end{proof}
 For the arbitrary closed subgroup $\Lambda$ and $\sigma$-finite measure space $\mc N,$ the following result can be deduced    for the set $B$  defined by $B:=\{ (\beta, x\Lambda) \in \widehat \Lambda\times  \mc G/ \Lambda:  \mc Z f (\beta, x\Lambda) \neq 0 \ \mbox{for some } \ f \in \mc A_0 \},$
 where for a given $\mc A$ in $L^2 (\mc G),$ the family of functions $\mc A_0 \ss \mc A$ is a countable dense subset of $\mc A$ ()see \cite{bownik2007SMI,Cabrelli2015SMI,iverson2015subspaces}).  The range function $J_{ \mc A}(\beta, x\Lambda)=\mathbb C.$ 
\cite[Theorem 7.3]{bownik2007SMI} The following result has so many predecessor to the Euclidean setup including \cite[Theorem 1]{bownik2007SMI}, \cite[Theorem 11]{gabardoDual2004Balian}, etc.
\begin{thm}\label{TMI-Alternate Dual Thm}  Let $\mc A=\{\varphi_t\}_{t\in \mc N}$  and $\mc A'=\{\psi_t\}_{t\in \mc N}$ be two collections of functions  in $L^2(\mc G)$   such that   $G(\mc A, \Lambda, \Lambda^\perp)$  and $G(\mc A', \Lambda, \Lambda^\perp)$ are Bessel,   where    $(\mc N,\mu_{\mc N})$  is a complete,   $\sigma$-finite measure space. Then    the following are true:  
	\begin{itemize}
		\item[(i)] $G(\mc A', \Lambda, \Lambda^\perp)$ is an $\mc S(\mc A, \Lambda, \Lambda^\perp)$-subspace dual to $G(\mc A, \Lambda, \Lambda^\perp)$ if and only if for a.e. $(\beta, x\Lambda )\in B,$  the system $({\mc Z} \mc A') (\beta, x\Lambda) :=   \{{\mc Z}\psi_t (\beta, x\Lambda): t \in \mc N\}$ is a $J_{ \mc A}(\beta, x\Lambda)$-subspace dual to  $({\mc Z} \mc A) (\beta, x\Lambda) :=   \{{\mc Z}\varphi_t (\beta, x\Lambda): t \in \mc N\}.$
			\item[(ii)] $G(\mc A', \Lambda, \Lambda^\perp)$ is an $S(\mc A, \Lambda, \Lambda^\perp)$-subspace orthogonal to $G(\mc A, \Lambda, \Lambda^\perp)$ if and only if for a.e. $(\beta, x\Lambda )\in B,$  the system $({\mc Z} \mc A') (\beta, x\Lambda)$ is a  $J_{ \mc A}(\beta, x\Lambda)$-subspace orthogonal to  $({\mc Z} \mc A) (\beta, x\Lambda).$
	 	\end{itemize}
\end{thm}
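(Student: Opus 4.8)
The plan is to reduce Theorem \ref{TMI-Alternate Dual Thm} to Theorem \ref{Th: Dual-Ortho-Non Discerete} by viewing the Gabor system as a translation-generated system for a larger generating family. Concretely, set $\tilde{\mc A} = \{E_\omega\varphi_t : \omega \in \Lambda^\perp,\ t \in \mc N\}$ and $\tilde{\mc A}' = \{E_\omega\psi_t : \omega \in \Lambda^\perp,\ t \in \mc N\}$, both indexed by the $\sigma$-finite product measure space $\Lambda^\perp \times \mc N$. Then $G(\mc A,\Lambda,\Lambda^\perp) = \mc E^\Lambda(\tilde{\mc A})$ and $G(\mc A',\Lambda,\Lambda^\perp) = \mc E^\Lambda(\tilde{\mc A}')$, and $\mc S(\mc A,\Lambda,\Lambda^\perp) = \mc S^\Lambda(\tilde{\mc A})$. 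The Bessel hypotheses on the Gabor systems are exactly the Bessel hypotheses on $\mc E^\Lambda(\tilde{\mc A})$ and $\mc E^\Lambda(\tilde{\mc A}')$ needed to invoke Theorem \ref{Th: Dual-Ortho-Non Discerete}. So the subspace-dual (resp. subspace-orthogonal) property of the Gabor pair is equivalent to the fiberwise statement that $(\mc Z\tilde{\mc A}')(\beta)$ is a $J_{\tilde{\mc A}}(\beta)$-subspace dual (resp. orthogonal) to $(\mc Z\tilde{\mc A})(\beta)$ for a.e.\ $\beta \in \widehat\Lambda$.

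The key computational input is the intertwining relation for the Zak transform and modulation: for $\omega \in \Lambda^\perp$, $\mc Z(E_\omega f)(\beta, x\Lambda) = \omega(x)\,\mc Z f(\beta, x\Lambda)$ (this is the formula already used in the proof of Theorem \ref{T:Gaborcountable}, and it holds because the Fourier transform intertwines modulation with translation and $\omega|_\Lambda = 1$). Thus the fiber of $\tilde{\mc A}$ over $\beta$ is $(\mc Z\tilde{\mc A})(\beta) = \{\,(x\Lambda \mapsto \omega(x)\mc Z\varphi_t(\beta, x\Lambda)) : \omega \in \Lambda^\perp,\ t \in \mc N\,\}$, which is precisely the ``modulation-generated'' system inside $L^2(\mc G/\Lambda)$ built from the fiber functions $\{\mc Z\varphi_t(\beta,\cdot)\}_{t\in\mc N}$. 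Now I would apply a second, ``smaller'' instance of the same dictionary: a Gabor-type analysis at the level of $L^2(\mc G/\Lambda)$ shows that the $J_{\tilde{\mc A}}(\beta)$-subspace dual/orthogonal relation between these $\Lambda^\perp$-modulation systems is equivalent to a further fiberwise relation over $(\beta, x\Lambda)$. The set where the relevant fiber functions are nonzero is exactly $B$, and on $B$ the one-dimensional range function $J_{\mc A}(\beta,x\Lambda)=\mathbb C$ emerges because a single point $x\Lambda$ carries a one-dimensional fiber after the $\Lambda^\perp$-``Zak'' decomposition. Carrying out this reduction cleanly — either by directly invoking the characterization of shift-invariant/Gabor duals on $\mc G/\Lambda$ (cf.\ \cite{bownik2007SMI,Cabrelli2015SMI}) fiberwise in $\beta$, or by a measurable-selection argument mirroring the contradiction scheme in the proof of Theorem \ref{Th: Dual-Ortho-Non Discerete} — gives the stated equivalence with $J_{\mc A}(\beta,x\Lambda)=\mathbb C$.

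An alternative, more self-contained route avoids the two-stage reduction: start from the defining identity $\int_{\mc N}\int_{\Lambda}\int_{\Lambda^\perp}\langle f, L_\lambda E_\omega\psi_t\rangle\langle L_\lambda E_\omega\varphi_t, g\rangle\,d\mu(\omega)\,d\mu(\lambda)\,d\mu(t) = \langle f,g\rangle$ (resp.\ $=0$) for all $f \in \mc S(\mc A,\Lambda,\Lambda^\perp)$, $g \in L^2(\mc G)$, push it through the Zak transform for $(\mc G,\Lambda)$ using the two intertwining formulas (translation $\leftrightarrow$ character $\beta$, modulation by $\Lambda^\perp$ $\leftrightarrow$ character of $\mc G/\Lambda$ evaluated at $x\Lambda$), and recognize the resulting double integral over $\widehat\Lambda\times\mc G/\Lambda$ as a Parseval identity after Fourier-transforming in both the $\Lambda$ and $\Lambda^\perp$ variables; then localize to a.e.\ $(\beta,x\Lambda)$ by the same four-quadrant positivity/measurable-set argument used in Theorems \ref{T:dualityMultiGenerators}(ii) and \ref{Th: Dual-Ortho-Non Discerete}(ii). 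Either way, the converse directions are the routine half: they follow by running the computations backward, using only the Bessel property to justify interchanging sums/integrals.

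\textbf{Main obstacle.} The delicate point is the precise identification of the range function as $J_{\mc A}(\beta,x\Lambda)=\mathbb C$ on the set $B$ and the honest verification that ``$J_{\tilde{\mc A}}(\beta)$-subspace dual for a.e.\ $\beta$'' unpacks to ``$\mathbb C$-subspace dual for a.e.\ $(\beta,x\Lambda)\in B$''; this requires the measurability/density bookkeeping of countable dense subsets $\mc A_0$, $\tilde{\mc A}_0$ at two levels and a Fubini-type argument to pass between the measure space $\widehat\Lambda$ (with fibers in $L^2(\mc G/\Lambda)$) and the product $\widehat\Lambda\times\mc G/\Lambda$. Everything else is bookkeeping once the two intertwining formulas for $\mc Z$ are in hand.
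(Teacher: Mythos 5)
The paper does not actually write out a proof of Theorem \ref{TMI-Alternate Dual Thm}: it only asserts, in the sentence preceding the statement, that the result ``can be deduced'' for the set $B$ with the pointwise range function $J_{\mc A}(\beta,x\Lambda)=\mathbb C$, and points to the shift-modulation-invariant space literature (\cite{bownik2007SMI,Cabrelli2015SMI,iverson2015subspaces}). Your two-stage reduction is precisely the natural way to make that deduction explicit, and it is consistent with how the paper handles the discrete case in Theorem \ref{T:Gaborcountable} (rewriting $G(\mc A,\Lambda,\Lambda^\perp)$ as $\mc E^\Lambda(\tilde{\mc A})$ with $\tilde{\mc A}=\{E_\omega\varphi_t\}$ and using $\mc Z(E_\omega f)(\beta,x\Lambda)=\omega(x)\mc Zf(\beta,x\Lambda)$): first Theorem \ref{Th: Dual-Ortho-Non Discerete} localizes in $\beta\in\widehat\Lambda$, and then, because $\Lambda^\perp\cong\widehat{\mc G/\Lambda}$, the fiber systems $\{\omega(\cdot)\,\mc Z\varphi_t(\beta,\cdot)\}_{\omega,t}$ generate a multiplication-invariant subspace of $L^2(\mc G/\Lambda)$, so the $J_{\tilde{\mc A}}(\beta)$-subspace relations localize further to a.e.\ $x\Lambda$, where the fiber is one-dimensional; this is exactly where $B$ and $J_{\mc A}(\beta,x\Lambda)=\mathbb C$ come from. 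The issues you flag are the genuine ones and are all surmountable: the set $B$ is unchanged when passing from $\mc A_0$ to $\tilde{\mc A}_0$ since $|\omega(x)|=1$; a Fubini argument converts ``for a.e.\ $\beta$, for a.e.\ $x\Lambda$'' into ``for a.e.\ $(\beta,x\Lambda)\in B$''; and for non-cocompact $\Lambda$ (so $\Lambda^\perp$ non-discrete) the second-stage localization cannot use an orthonormal-basis-of-characters argument and must instead run through the multiplication-invariant machinery of \cite{bownik2019multiplication} or the four-set positivity scheme of Theorem \ref{Th: Dual-Ortho-Non Discerete}(ii), both of which you correctly identify as the fallback. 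Your alternative single-pass route (Zak transform plus Parseval in both the $\Lambda$ and $\Lambda^\perp$ variables, then localization by contradiction) is equally viable and arguably closer in spirit to the proofs of Proposition \ref{P:BesselCharecterization} and Theorem \ref{Th: Dual-Ortho-Non Discerete}. In short: the proposal is correct in outline and supplies an argument the paper omits; what remains is the bookkeeping you yourself itemize, none of which hides an essential obstruction.
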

 \subsection{On the Euclidian space  $\mathbb R^n$ by the action of integer shifts $\mathbb Z^n$} Let us consider $\mc G=\mathbb R^n$ and $\Lambda=\mathbb Z^n$. Then $\widehat{\mc G}=\mathbb R^n$, $\Lambda^\perp=\mathbb Z^n$ and the fundamental domain for  $\mathbb Z^n$ is $\widehat{\mc G}\backslash \Lambda^\perp=\mathbb T^n$. The fiberization map  $ \mathscr T:L^2(\mathbb R^n)\ra L^2(\mathbb T^n;\ell^2(\mathbb Z^n))$  is defined by
$\mathscr Tf(\xi)=\{\widehat {f}(\xi+k)\}_{k\in \mathbb Z^n}, \xi \in \mathbb  T^n$. Therefore we can  obtain characterization results of subspace duals and orthogonal frames  using $\mathscr T,$ which cover results of references within    \cite{christensen2005generalized, christensen2004oblique, heil2009duals, gabardo2007uniqueness,kim2007pair,weber2004orthogonal}.

\subsection{Splines on LCA groups}  Fix $N \in \mathbb N.$ For an LCA group  $\mc G$ with the uniform lattice $\Lambda$ and the associated fundamental domain $\mho,$ the \textit{weighted $B$-spline of order $N$} is defined by  $B_N=\varphi_1\chi_{\mho}*\dots*\varphi_N\chi_{\mho},$
  where  $\varphi_i \in L^2(\mho)$ for $1 \leq i \leq N.$ Then the system $\mc E^\Lambda(B_N)$ is Bessel \cite{christensen2016introduction}. Similarly, the system   $\mc E^\Lambda(B'_N)$ is also Bessel, where $B_N=\psi_1\chi_{\mho}*\dots*\psi_N\chi_{\mho}$ for $\psi_i \in L^2(\mho)$ with $1 \leq i \leq N.$  Therefore for a.e. $\xi \in \Omega$ (fundamental domain associated with $\Lambda^\perp$ in $\widehat{\mc G}$), we have the following similar to Example \ref{ex:discrete-dual2}:
  \begin{align*}
  [B_N,  B'_N]_{\mathscr T}(\xi)=&\sum_{\lambda \in \Lambda ^{\perp}} \widehat{B_N}(\xi+\lambda) \ol{\widehat{B'_N}(\xi+\lambda)}= \sum_{\lambda \in \Lambda ^{\perp}} \Big(\prod{j=1}^N\widehat{(\varphi_j\chi_{\mho})}(\xi+\lambda) \ol{\widehat{(\psi_j\chi_{\mho})}(\xi+\lambda)}\Big).  
  \end{align*}
By the  Corollary \ref{C:DualMain}, the subspace orthogonality and  duality for the system $\mc E^\Lambda(B'_N)$ associated with  $\mc E^\Lambda(B_N)$ can be described by assigning the values on the above expression either 0 or 1, respectively. Due to the importance of splines in numerous applications, both the Euclidean and the LCA group setups conduct in-depth research on them for the Gabor and Wavelet systems \cite{christensen2016introduction, jakobsen2016co}. 
    
\subsection{For other setup}In the scenario of $p$-adic numbers $\mathbb Q_p$ and Heisenberg groups (semidirect product LCA  groups), we may examine our results for the subspace orthogonality and duality of  Bessel families.  Considering that the Zak transform plays a significant role in describing our results, such as, Theorems \ref{T:dualityMultiGenerators},    \ref{T:Orthogonal}, \ref{Th:dual existence},  and  \ref{Th: Dual-Ortho-Non Discerete},   Corollary \ref{C:DualMain}, we describe below the   Zak transform in these setups, see \cite{arefijamaal2013continuous, iverson2015subspaces} and references therein.

\subsubsection{$p$-adic numbers $\mathbb Q_p$} 
	For a prime number $p,$   the locally compact field of $p$-adic numbers $\mathbb Q_p$ is	$\{\sum_{j=m}^{\infty}c_jp^j:m\in \mathbb Z, c_j\in \{0,1,\dots,p-1\}\}$ in which the associated the $p$-adic norm is $|x|_p=p^{-m}$ for  $x=\sum_{j=m}^{\infty}c_jp^j, c_m\neq 0.$  Indeed, it is an LCA group.  The $p$-adic integers $\mathbb Z_p$ defined by  
	$\{\mathbb Z_p:=\{x\in \mathbb Q_p:|x|_p\leq 1\}\}$ is  a compact open subgroup of $\mathbb Q_p.$ In this setup, the Zak transform is given by $\mc Zf(x,y)=\int_{\mathbb Z_p}f(y+\xi)e^{-2 \pi i x \xi}\ d{\mu_{\mathbb Z_p}}(\xi)$ for $ f\in L^1(\mathbb Q_p)\cap L^2(\mathbb Q_p),$ and $x, y \in \Omega$ which can be extended from $L^2(\mathbb Q_p)$ to $\ell^2(\Omega\times \Omega),$ where $\Omega$ is the  fundamental domain.

  \subsubsection{Semidirect product of LCA groups} For LCA groups $\Gamma_1$ and $\Gamma_2,$ let us consider a locally compact group $\mathscr G_\tau=\Gamma_1 \rtimes_\tau \Gamma_2$ by the semidirect product of $\Gamma_1$ and $\Gamma_2$ with the binary operation
 $(\gamma_1,\gamma_2).(\gamma_1',\gamma_2')=(\gamma_1\gamma_1', \gamma_2\tau_{\gamma_1}(\gamma_2)),$  where $\gamma_1\mapsto \tau_{\gamma_1}$ is a group homomorphism from $\Gamma_1$ to the set of all automorphisms on $\Gamma_2,$ such that $(\gamma_1,\gamma_2)\mapsto \tau_{\gamma_1}(\gamma_2)$ from $\Gamma_1\times \Gamma_2\mapsto \Gamma_2$ is continuous.    Then  the  Zak transform $\mc Z$ is defined by   $\mc Zf(\gamma_1,w)=\int_{\Gamma_2}f(\gamma_1,\gamma_2)\overline{w(\gamma_2)}\delta(\gamma_1)\ d{\mu_{\Gamma_2}}(\gamma_2)$  for $f \in L^1 (\Gamma_1\times_{\tau}\Gamma_2),$  $(\gamma_1, \omega) \in \Gamma_1\rtimes_{\widehat{\tau}}\widehat{\Gamma_2},$  where $\delta$ is a positive homeomorphism on $\Gamma_1$ given by 
 $d{\mu_{\Gamma_2}}(\gamma_2) = \delta(\gamma_1) d{\mu_{\Gamma_2}}({\tau_{\gamma_1}}(\gamma_2)).$ It can be extended 
 from $L^2(\Gamma_1\times_{\tau}\Gamma_2)$ to $ L^2(\Gamma_1\rtimes_{\widehat{\tau}}\widehat{\Gamma_2}).$

\end{document}